\documentclass{aims}
\usepackage{amsmath, dsfont, mathtools, cite}
\mathtoolsset{showonlyrefs=true,showmanualtags=true}
\usepackage{paralist}
\usepackage{graphics} 
\usepackage{epsfig} 
\usepackage{graphicx}  \usepackage{epstopdf}
\usepackage[colorlinks=true]{hyperref}
\hypersetup{urlcolor=blue, citecolor=red}

\textheight=8.2 true in
\textwidth=5.0 true in
\topmargin 30pt
\setcounter{page}{1}



\newtheorem{theorem}{Theorem}[section]
\newtheorem{corollary}[theorem]{Corollary}

\newtheorem{lemma}[theorem]{Lemma}
\newtheorem{assumption}[theorem]{Assumption}

\theoremstyle{definition}
\newtheorem{definition}[theorem]{Definition}
\newtheorem{remark}[theorem]{Remark}

\title[Infinite-dimensional Event/Self-triggered Control] 
      {Stability Analysis of Infinite-dimensional 
      	Event-triggered and Self-triggered Control Systems with Lipschitz Perturbations}

\author[Masashi Wakaiki and Hideki Sano]{}

\subjclass{Primary: 93C25, 93C57, 93D15; Secondary: 47D06.}
 \keywords{Event-triggered control, infinite-dimensional systems, Lipschitz perturbations, self-triggered control, stability.}

 \email{wakaiki@ruby.kobe-u.ac.jp}
 \email{sano@crystal.kobe-u.ac.jp}

\thanks{The first author is supported by JSPS KAKENHI 
	Grant Numbers JP20K14362}

\thanks{$^*$ Corresponding author: wakaiki@ruby.kobe-u.ac.jp}

\newcommand{\dom}{\mathop{\rm dom}\nolimits}

\begin{document}
\maketitle

\centerline{\scshape Masashi Wakaiki$^*$}
\medskip
{\footnotesize
 \centerline{Graduate School of System Informatics, Kobe University}
   \centerline{Nada, Kobe, Hyogo 657-8501, Japan}
} 

\medskip

\centerline{\scshape Hideki Sano}
\medskip
{\footnotesize
 \centerline{Graduate School of System Informatics, Kobe University}
\centerline{Nada, Kobe, Hyogo 657-8501, Japan}
}

\bigskip

 \centerline{(Communicated by the associate editor name)}

\begin{abstract}
This paper addresses the following question: ``Suppose 
that  a state-feedback controller stabilizes an infinite-dimensional linear continuous-time
system. If we choose the parameters of 
an event/self-triggering mechanism appropriately, is
the event/self-triggered control system 
stable under all sufficiently small nonlinear Lipschitz perturbations?''
We assume that the stabilizing feedback operator is compact.
This assumption is used to guarantee the strict positiveness of inter-event times
and the existence  of the mild solution of evolution equations
with unbounded control operators.
First, for  the case where the control operator is bounded,
we show that 
the answer to the above question is positive, giving
a sufficient condition  for exponential stability, which can be
employed for the design of event/self-triggering mechanisms.
Next, we investigate the case where the control operator is unbounded and prove that
the answer is still positive for periodic event-triggering mechanisms.
\end{abstract}

\section{Introduction}
In this paper, we study event/self-triggered control for infinite-dimensional systems.
As the time-discretization of control systems,
periodic sampling and control-updating are widely used.
Various problems on periodic sampled-data control have been studied for infinite-dimensional systems;
for example, stabilization \cite{Tarn1988, Rebarber1998, Logemann2005, Logemann2013, Karafyllis2018,Kang2018Automatica,Wakaiki2020SCL,
Lin2020}, 
robustness  analysis of continuous-time stabilization with respect to
periodic sampling \cite{Rebarber2002, Logemann2003, Rebarber2006}, and output regulation 
\cite{Logemann1997, Ke2009SCL, Ke2009SIAM, Ke2009IEEE, Wakaiki2019}.
Event/self-triggering mechanisms are other time-discretization methods, which
send measurements and update control inputs when they are needed.
In event-triggered control systems, a sensor monitors the plant output and 
determines when it sends the data to a controller (Figure~\ref{fig:ETC}).
On the other hand, in self-triggered control systems,
the controller computes times at which
the sensor transmits the data to the controller (Figure~\ref{fig:STC}).
The major difference is that the current output can be used to determine 
transmission times in event-triggered control systems but
not in self-triggered control systems.
Therefore, in the state-feedback case where we denote by $x(t)$ 
the state  at time $t\geq 0$,
the transmission times $\{t_k\}_{k \in \mathbb{N}_0}$ 
of event-triggered control systems 
are typically computed 
in the form $t_{k+1} = \inf \big\{ t>t_k: F_{\rm e}\big(x(t),x(t_k)\big) >0 \big\}$
and those of self-triggered control systems are in the form 
$t_{k+1} = t_k + F_{\rm s}\big(x(t_k) \big)$
for some functions $F_{\rm e}$ and $F_{\rm s}$.

\begin{figure}[tb]
	\centering
	\includegraphics[width = 6.5cm]{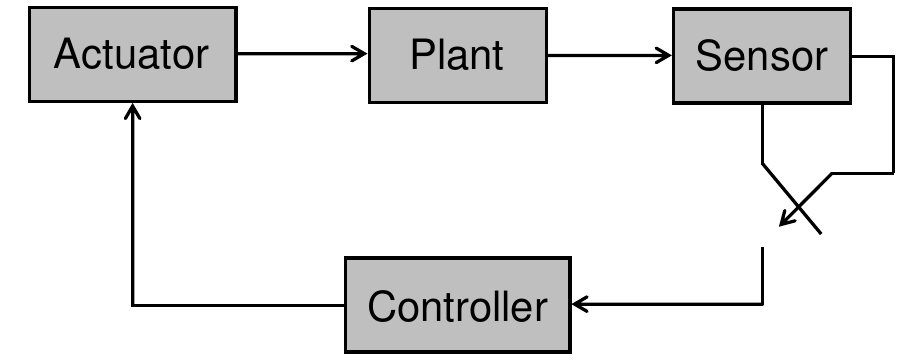}
	\caption{Event-triggered control system.}
	\label{fig:ETC}
\end{figure}

\begin{figure}[tb]
	\centering
	\includegraphics[width = 6.5cm]{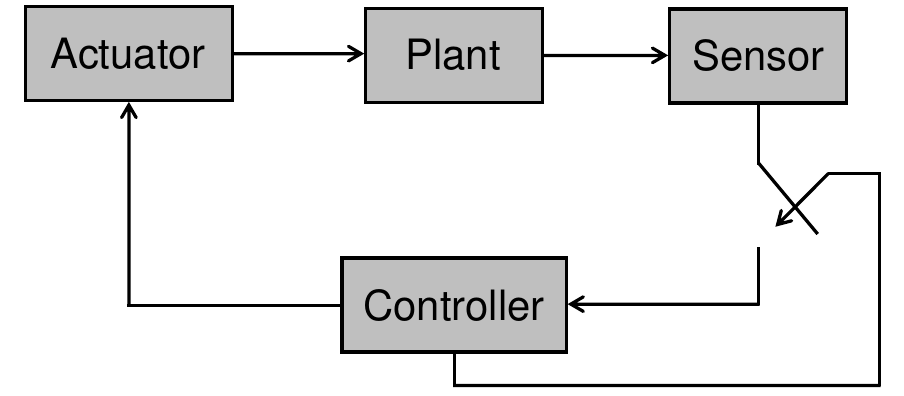}
	\caption{Self-triggered control system.}
	\label{fig:STC}
\end{figure}

Event/self-triggered control has been an area of intense research,
starting from the seminal works of
Tabuada \cite{Tabuada2007}, Wang and Lemmon \cite{Wang2009}, Anta and Tabuada~\cite{Anta2010}  for finite-dimensional systems.
Adaptation of sampling periods in sampled-data systems is a topic close to event/self-triggered control, which
has been also explored in \cite{Ilchmann2011} for finite-dimensional systems.
Event-triggered control methods have been extended to some classes of infinite-dimensional systems,
e.g., systems with output delays and packet losses \cite{Lehmann2012}, 
first-order hyperbolic systems \cite{Espitia2016,Espitia2018},
second-order hyperbolic systems \cite{Baudouin2019}, 
second-order parabolic systems \cite{Jiang2016, Selivanov2016PDE,Karafyllis2019},
and abstract linear evolution equations \cite{Wakaiki2018_EVC}. 
However, relatively little work has been done on 
self-triggered control for infinite-dimensional systems,
compared with event-triggered control.

We 
consider the following system with state space $X$ and input space $U$
(both Hilbert spaces):
\begin{equation}
\label{eq:state_eq_intro}
\dot x(t) = Ax(t)+Bu(t) + \phi\big(
x(t)
\big), \quad t \geq 0;\qquad x(0) = x^0 \in X,
\end{equation}
where $A$ is the generator of a strongly continuous semigroup $T(t)$ on $X$,
the control operator $B$ is a bounded linear operator from $U$ to the extrapolation space $X_{-1}$
associated with $T(t)$ (see the notation section for the definition of the extrapolation space $X_{-1}$), and 
the perturbation $\phi$ is a nonlinear operator on $X$ satisfying $\phi(0) = 0$ and the 
Lipschitz condition
\[
\|\phi(\xi_1) -\phi(\xi_2) \| \leq L \|\xi_1 - \xi_2\|\qquad \forall \xi_1,\xi_2 \in X
\]
for some Lipschitz constant $L \geq 0$. 
We call the control operator $B$ in \eqref{eq:state_eq_intro} 
{\em bounded} if
$B$ maps boundedly from $U$ into $X$.
Otherwise, we call $B$ {\em unbounded}.

Choose  a bounded linear operator $F$ from $X$ to $U$ such that 
the state-feedback controller $u(t) = Fx(t)$ exponentially stabilizes the linear system
$\dot x(t) = Ax(t) +Bu(t)$, that is, 
the strongly continuous semigroup generated by
$A+BF$ is exponentially stable.
For the infinite-dimensional system \eqref{eq:state_eq_intro},
we here implement an event/self-triggering controller, which is given by
\begin{equation}
\label{eq:control_eq_intro}
u(t) = Fx(t_k),\quad t_k \leq t < t_{k+1},~k \in \mathbb{N}_0,
\end{equation}
where $\{t_k\}_{k\in \mathbb{N}_0}$ is determined by a certain
event/self-triggering mechanism.
If we appropriately choose the parameters of the event/self-triggering mechanism,
then the inter-event times $t_{k+1}-t_k$ can be small. Therefore, we would expect
intuitively that the event/self-triggered controller \eqref{eq:control_eq_intro} exponentially
stabilizes the system \eqref{eq:state_eq_intro} for all perturbations
with sufficiently small Lipschitz constants $L$.
The main objective of this paper is to show that 
this intuition is correct.

In addition to stabilization,  the minimum inter-event time, $\inf_{k \in \mathbb{N}_0} (t_{k+1} - t_k)$,
needs to be strictly positive. Otherwise, data transmissions might occur infinitely fast, which
cannot be executed in practical implementation.
This is an example of Zeno behavior studied in the context of hybrid systems; see, e.g., \cite{Goebel2009}. 
In the infinite-dimensional case,
a careful treatment of the minimum inter-event time is required even for
state-feedback control. 
In fact, for finite-dimensional linear systems, if we use the following standard event-triggering mechanism
proposed in \cite{Tabuada2007}:
\begin{equation}
\label{eq:standard_ETM}
t_{k+1} = \inf \{ t>t_k:
\|x(t_k) - x(t)\| > \varepsilon \|x(t)\|
\},\quad k \in \mathbb{N}_0,~\varepsilon >0,
\end{equation}
then for every threshold $\varepsilon >0$, there exists $\theta>0$ such that 
$\inf_{k \in \mathbb{N}_0} (t_{k+1} - t_k) \geq \theta$ 
for all $x^0 \in X$; see Corollary~IV.1 of \cite{Tabuada2007}.
However, for infinite-dimensional linear systems,
the same mechanism may not guarantee that 
the minimum inter-event time is bounded from below by a positive constant
as shown in  Examples~3.1 and 3.2 in \cite{Wakaiki2018_EVC}.

The important assumption of this paper is that 
the feedback operator $F$ is compact, which
is used for two purposes.
First, we guarantee the strict positiveness of inter-event times,
using the compactness of the feedback operator.
Second, this assumption is employed to prove the existence of
the mild solution of the 
evolution equation \eqref{eq:state_eq_intro} and \eqref{eq:control_eq_intro}
in the unbounded control case.

We start with the case in which $B$ is bounded.
In the previous work \cite{Wakaiki2018_EVC}, the following
event-triggering mechanism has been considered for
the system \eqref{eq:state_eq_intro} in the unperturbed case $\phi \equiv 0$:
\begin{align}
t_{k+1} &= 
\min\big\{ t_k + \tau_{\max},~\inf \{ t> t_k:
\|Fx(t_k) - Fx(t)\|_U > \varepsilon \|x(t_k)\|
\}
\big\} \label{eq:ETM1_intro}
\end{align}
for $k \in \mathbb{N}_0$, where
$\tau_{\max}>0$ is
an upper bound of  inter-event times.
The self-triggering 
mechanism we propose in this paper is based on
\eqref{eq:ETM1_intro}. Before describing it, we briefly explain
two different points of 
the event-triggering mechanism \eqref{eq:ETM1_intro}
from the standard one \eqref{eq:standard_ETM}.

First, the mechanism \eqref{eq:ETM1_intro} has the upper bound $\tau_{\max}$ of inter-event times.
Since $\|x(t_k)\|$ is used for the estimation of the 
implementation-induced error
$\|Fx(t_k) - Fx(t)\|_U$
in the mechanism \eqref{eq:ETM1_intro}, 
exponential convergence is not guaranteed unless we set
an upper bound of inter-event times.
The
exponential stability of the unperturbed event-triggered control system 
is achieved for every $\tau_{\max}>0$ under
some condition on the threshold $\varepsilon$, 
although the decay rate becomes small as
$\tau_{\max}$ increases; see Theorem~4.1 in \cite{Wakaiki2018_EVC}.

Second, the implementation-induced error of the input, $\|Fx(t_k) - Fx(t)\|_U$, 
is used in \eqref{eq:ETM1_intro}.
As in the case of the mechanism \eqref{eq:standard_ETM},
there exists an infinite-dimensional systems such that
an event-triggering mechanism using the condition
$\|x(t_k) - x(t)\| > \varepsilon \|x(t_k)\|$ does
not guarantees that the minimum inter-event time 
is bounded from below by positive constant;
see Example~3.1 in \cite{Wakaiki2018_EVC}. However,
it has been shown in Theorem~3.6 of \cite{Wakaiki2018_EVC} that 
the minimum inter-event time of the 
mechanism \eqref{eq:ETM1_intro} is
strictly positive if the feedback operator $F$ is compact.

Event-triggering mechanisms using the feedback 
operator $F$ are not practical in some situations.
This is the main motivation of the extension of \eqref{eq:ETM1_intro}
to a self-triggering mechanism.
It is reasonable that the controller uses
the mechanism \eqref{eq:ETM1_intro} for the computation
of transmission times at which
the controller sends the control input to the actuator.
However, it makes little sense that 
the sensor uses the mechanism \eqref{eq:ETM1_intro} in the situation where
the sensor and the controller are separated.
Indeed, since the control input is computed in the mechanism \eqref{eq:ETM1_intro},
the sensor may directly transmit the control input to the actuator without going through the controller;
see also the discussion in Section~VII-B of \cite{Tabuada2007}.

For the bounded control case, we propose the following self-triggering mechanism:
\begin{align}
t_{k+1} &= t_k + 
	\min\big \{ \tau_{\max},~ \inf \{ \tau>0:
	\alpha_{L,\varepsilon}(x(t_k),\tau) \geq \varepsilon \|x(t_k)\| \}
	\big\}
\label{eq:STM_intro}
\end{align}
for $k \in \mathbb{N}_0$,
where $\alpha_{L,\varepsilon}:X \times \mathbb{R}_+ \to \mathbb{R}_+$ is a certain function depending on $L$ and $\varepsilon$;
see Section~\ref{sec:STM} for details.
Instead of monitoring the state $x(t)$ continuously,
the self-triggering mechanism \eqref{eq:STM_intro} predicts it 
to estimate $\|Fx(t_k) - Fx(t)\|_U$,
by using the nominal linear model $(T(t),B,F)$, 
the Lipschitz constant $L$, and
the latest transmitted state $x(t_k)$. Consequently,
the self-triggering mechanism \eqref{eq:STM_intro} can be
implemented in the controller.
The function $\alpha_{L,\varepsilon}$ is defined so that 
\[
\|Fx(t_k) - Fx(t)\|_U \leq 
\alpha_{L,\varepsilon}(x(t_k),t-t_k)
\]
holds for every $t \in [t_k, t_{k+1})$ and $k \in \mathbb{N}_0$.
Therefore, under the self-triggering mechanism \eqref{eq:STM_intro}, 
we obtain
\[
\|Fx(t_k) - Fx(t)\|_U  \leq \varepsilon \|x(t_k)\|
\]
for every $t \in [t_k, t_{k+1})$ and $k \in \mathbb{N}_0$
as under the event-triggering mechanism \eqref{eq:ETM1_intro}.
We show the strict positiveness of  inter-event times 
and provide a sufficient condition for
the exponential stability of the self-triggered control system.

Another solution to avoid the use of the feedback operator $F$ in the sensor
is the following event-triggering mechanisms:
\begin{equation}
t_{k+1} = 
\min\big\{ t_k + \tau_{\max},~\inf \{ t> t_k+ \tau_{\min}:
\|x(t_k) - x(t)\| > \varepsilon \|x(t_k)\|
\}
\big\}
\label{eq:ETM2_intro}
\end{equation}
for $k \in \mathbb{N}_0$, where $\tau_{\min} \in (0,\tau_{\max})$ is
a prespecified lower bound of  inter-event times.
The event-triggering mechanism  \eqref{eq:ETM2_intro}
is based on the one studied in \cite{Heemels2008} for
finite-dimensional systems.
The difficulty here is that 
the  event-triggering mechanism  \eqref{eq:ETM2_intro} does not 
guarantee that the error $\|x(t_k)-x(t_k +\tau)\|$ is small for $0<\tau < \tau_{\min}$.
However, we show that if  the lower bound $\tau_{\min}$ is chosen appropriately, then
exponential stability is preserved under 
the  event-triggering mechanism  \eqref{eq:ETM2_intro} 
for all sufficiently small Lipschitz constants $L>0$ and 
thresholds $\varepsilon \geq 0$. 
For the unperturbed case $\phi \equiv 0$, we also provide a simple sufficient condition for exponential stability,
in which the upper bound $\tau_{\max}$ of inter-event times 
does not appear as in
the case of the event-triggering mechanism \eqref{eq:ETM1_intro}.
It is worthwhile noticing that the stability analysis under the
event-triggering mechanism \eqref{eq:ETM2_intro} is new even without Lipschitz perturbations.

Next we investigate the case in which $B$ is unbounded.
As in the unperturbed case $\phi \equiv 0$ in Section~5.2 of \cite{Wakaiki2018_EVC},
we apply the following periodic  event-triggering mechanism: 
\begin{equation}
	\label{eq:ETM3_intro}
	t_{k+1} := 
	\min\big\{ t_k + \ell_{\max}h,~\min \{ \ell h > t_k:
	\|x(t_k) - x(\ell h)\| > \varepsilon \|x(t_k)\|,~\ell \in \mathbb{N}
	\}
	\big\},
\end{equation}
where $h>0$ is a sampling period and
$\ell_{\max} \in \mathbb{N}$ determines an upper bound of
inter-event times.
The periodic event-triggering mechanism has been studied for finite-dimensional linear  systems
in \cite{Heemels2013} and then has been extended to finite-dimensional  nonlinear Lipschitz 
systems in \cite{Etienne2017}.
Compared with the above mechanisms \eqref{eq:standard_ETM}, \eqref{eq:ETM1_intro}, and \eqref{eq:ETM2_intro},
the periodic event-triggering mechanism \eqref{eq:ETM3_intro} behaves in a more time-triggering fashion, because
the condition is verified only periodically. This periodic aspect leads to 
several benefits. First, the minimum inter-event time is bounded from below 
by $h$. Second, the sensor needs to monitor the state and check the condition
only at sampling times, and hence the periodic event-triggering mechanism \eqref{eq:ETM3_intro}
is more suitable for practical implementations. 

In the unbounded control case,
we begin by showing that a mild solution of the evolution equation \eqref{eq:state_eq_intro} and \eqref{eq:control_eq_intro}
uniquely exists in $C(\mathbb{R}_+,X)$. The difficulty arises from
the combination of the unboundedness of the control operator $B$ and 
the nonlinearity of the perturbation $\phi$. To solve the difficulty, 
we use the fact that if $F$ is compact, then $S_hF x^0$ is continuous with respect to $h$ 
in the norm of $X$ for every $x^0 \in X$, where 
\[
S_h :=\int^h_0 T(s)Bds.
\]
Next, assuming that the operator 
$
\Delta_h := T(h)+ S_hF,
$
which governs
the state evolution of the discretized system, is power stable, 
we extend the stability analysis developed in 
Section~5.2 of \cite{Wakaiki2018_EVC}
to the perturbed case and show that the
exponential stability of
the periodic event-triggered control system is achieved
for all sufficiently small Lipschitz constants $L>0$ and 
thresholds $\varepsilon \geq 0$. 
This is only an existence result, because
it is generally difficult in the unbounded control case to estimate how small
the sampling period $h$ has to be in order to achieve 
the exponential stability of the periodic sampled-data system.
However,  returning to the bounded control case, we
develop a simple sufficient condition  for exponential stability.
Similarly to the mechanisms \eqref{eq:ETM1_intro} and \eqref{eq:ETM2_intro},
the upper bound $\ell_{\max}h$ of inter-event times does not appear in
this sufficient condition,
and the decay rate of the closed-loop system 
becomes small  as $\ell_{\max}h$ increases.

This paper  is organized as follows.
In Section~\ref{sec:bounded_control}, we consider the case in which
the control operator $B$ is bounded. First we 
analyze the minimum inter-event time and the exponential stability
of the self-triggered control system with the mechanism \eqref{eq:STM_intro}.
Next, exponential stability under
the event-triggering mechanism \eqref{eq:ETM2_intro} is studied.
Before proceeding to the unbounded control case,
we provide a numerical example  in Section~\ref{sec:numerical_example}
to illustrate the proposed event/self-triggering mechanisms for 
bounded control case, where
a heat equation in cascade with an ordinary differential equation (ODE) is considered.
In Section~\ref{sec:PET}, we study the case in which
the control operator $B$ is unbounded.
After proving that a mild solution of the evolution equation uniquely exists,
we apply the periodic event-triggering
mechanism \eqref{eq:ETM3_intro} 
to infinite-dimensional systems with Lipschitz perturbations.

\subsection*{Notation}
We denote by $\mathbb{Z}$ and $\mathbb{N}$ the set of  integers and
the set of positive integers, respectively.
Define 
$\mathbb{N}_0 := \mathbb{N} \cup \{0\}$ and $\mathbb{R}_+ := [0,\infty)$.
Let $X$ and $Y$ be Banach spaces. Let us denote 
the space of all bounded linear operators from $X$ to $Y$ 
by $\mathcal{B}(X,Y)$, and set $\mathcal{B}(X) := \mathcal{B}(X,X)$.
Denote by $\mathcal{K}(X,Y)$ the closed subspace of $\mathcal{B}(X,Y)$ consisting
of all compact operators.
Let $A$ be a linear operator from $X$ to $Y$. 
The domain of $A$ is denoted by $\dom (A)$.
The resolvent set  of a linear operator $A:\dom (A) \subset X \to X$
is denoted by $\varrho(A)$.
For an interval $I \subset \mathbb{R}$, 
we denote by $C(I,X)$ the space of all continuous functions $f:I \to X$ and
by $C^1(I,X)$
the space of all continuously differentiable functions $f:I \to X$.
For $p \geq 1$, we denote by $L^p(I,X)$ the space of
all measurable functions $f:I \to X$ such that $\int_I \|f(t)\|^pdt < \infty$.

Let $X$ be a Banach space.
An operator $\Delta \in \mathcal{B}(X)$ is said to be {\em power stable} if there exist
$\Omega \geq  1$ and $\omega \in (0,1)$ such that 
$\|\Delta^k\|_{\mathcal{B}(X)}  \leq \Omega \omega^k$ for every $k \in \mathbb{N}_0$.
Let  $T(t)$ be
a strongly continuous semigroup on $X$.
We say that $T(t)$ is {\em exponentially stable}
if  
there exist $\Gamma \geq 1$ and $\gamma >0$ satisfy
$\|T(t)\|_{\mathcal{B}(X)} \leq \Gamma e^{-\gamma t}$ for all $t \geq 0$.
Let $A$ be the generator of $T(t)$.
For $\lambda \in \varrho(A)$,
the extrapolation space $X_{-1}$ associated with $T(t)$ is the completion of $X$ with
respect to the norm $\|x \|_{-1} := \|(\lambda I - A)^{-1}x \|$.
Different choices of $\lambda$ lead to equivalent norms on $X_{-1}$.
The semigroup $T(t)$ can be extended to a strongly continuous semigroup 
on $X_{-1}$, and its generator on $X_{-1}$ is an extension of $A$ to $X$.
We shall use the same symbols $T(t)$ and $A$
for the original ones and the associated extensions.
We refer the reader to Section~II.5 in \cite{Engel2000} and
Section~2.10 in \cite{Tucsnak2009} for more details
on the extrapolation space $X_{-1}$.

\section{Event/self-triggering mechanisms for bounded control}
\label{sec:bounded_control}
In this section, 
we study event/self-triggered control systems with
bounded control operators, i.e, $B \in \mathcal{B}(U,X)$.
First, we introduce the infinite-dimensional system
considered here.
Next, we propose a self-triggering mechanism
employing the input error and then 
analyze the minimum inter-event time
and the exponential stability of the self-triggered control system.
Finally, we study the exponential stability 
of event-triggered control systems with mechanisms
in which a lower bound of the minimum inter-event time is prespecified.
\subsection{Plant dynamics and preliminaries}
Let us denote by $X$ and $U$ the state space and the input space, 
and both of them are Hilbert spaces.
We denote by $\|\cdot\|$  the norm of $X$.
Take  $\tau_{\min} >0$, and
let an increasing sequence $\{t_k\}_{k\in\mathbb{N}_0}$ satisfy $t_0 = 0$ and 
$t_{k+1}-t_k \geq \tau_{\min}$ for every $k \in \mathbb{N}_0$.
Consider the following infinite-dimensional system:
\begin{subequations}
	\label{eq:plant}
	\noeqref{eq:state_equation,eq:input}
	\begin{align}
	\label{eq:state_equation}
	\dot x (t) &= Ax(t) +Bu(t) + \phi\big(x(t)\big),\quad t \geq 0; \qquad 
	x(0) = x^0 \in X \\
	\label{eq:input}
	u(t) &= F x(t_k),\quad  t_k \leq t < t_{k+1},~k \in \mathbb{N}_0,
	\end{align}
\end{subequations}
where $x(t) \in X$ is the state and $u(t) \in U$ is the input for $t\geq 0$.
We assume that 
$A$ is the generator of a strongly continuous semigroup $T(t)$ on $X$.
The control 
operator $B$ and the feedback operator $F$ satisfy
$B \in \mathcal{B}(U,X)$ and $F \in  \mathcal{B}(X,U)$, respectively.
The perturbation $\phi:X\to X$ is a nonlinear operator satisfying 
the Lipschitz  condition
\begin{equation}
\label{eq:Lip_cond}
\phi(0) = 0,\quad \|\phi(\xi_1) -\phi(\xi_2) \| \leq L \|\xi_1 - \xi_2\|\qquad \forall \xi_1,\xi_2 \in X
\end{equation}
for some Lipschitz constant $L \geq 0$.

To study the solution of 
the evolution equation \eqref{eq:plant}, we consider
the following integral equation:
\begin{subequations}
	\label{eq:mild_solution}
	\noeqref{eq:mild_solution_initial,eq:mild_solution_eq}
	\begin{align}
	x(0) &= x^0 \in X \label{eq:mild_solution_initial}\\
	x(t_k+\tau) &= T(\tau) x(t_k) + \int^\tau_0
	T(\tau - s) 
	\Big(
	BF x(t_k) + \phi\big(x(t_k+s) \big)
	\Big) ds\label{eq:mild_solution_eq} 
	\end{align}
\end{subequations}
for all $\tau \in (0,t_{k+1}-t_k]$ and all $k \in \mathbb{N}_0$.
The
integral equation \eqref{eq:mild_solution} has a unique solution in $C(\mathbb{R}_+,X)$ by
Theorem~1.2 on p.~184 of \cite{Pazy1983}.
Moreover, this solution satisfies the evolution equation \eqref{eq:plant} in a certain
sense; see, e.g., Theorem~\ref{prop:existence_solution} 
in the unbounded control case for
details.
We say that the continuous solution of the
integral equation \eqref{eq:mild_solution} is
a (mild) solution of the evolution equation \eqref{eq:plant}.

We define the exponential stability of the closed-loop system \eqref{eq:plant}.
\begin{definition}[Exponential stability]
	The closed-loop system \eqref{eq:plant} is {\em exponentially stable} if
	there exist $\Gamma \geq 1$ and $\gamma >0$ such that 
	the solution $x$ of the integral equation \eqref{eq:mild_solution} satisfies
	\[
	\|x(t)\| \leq \Gamma e^{-\gamma t} \|x^0\|\qquad \forall x^0 \in X,~\forall t\geq0.
	\]
\end{definition}

Define the operators $S_\tau \in \mathcal{B}(U,X)$ and
$\Delta_\tau \in \mathcal{B}(X)$ by
\begin{equation}
\label{eq:Delta_def}
S_\tau := \int^{\tau}_0 T(s) Bds,\qquad \Delta_\tau := T(\tau) + S_\tau F.
\end{equation}
Using 
this operator $\Delta_\tau$, 
we can rewrite \eqref{eq:mild_solution_eq} as
\begin{equation}
\label{eq:x_Delta}
x(t_k + \tau) = \Delta_\tau x(t_k) + \int^\tau_0 T(\tau-s) \phi \big(
x(t_k+s)
\big) ds.
\end{equation}
for all $\tau \in (0,t_{k+1}-t_k]$ and all $k \in \mathbb{N}_0$.

A simple calculation (see, e.g., Exercise 3.3 on p.~129 in \cite{Curtain1995}) yields the following
equivalence of solutions:
\begin{lemma}
	\label{lem:another_rep_MS}
	Let $\tau >0$. 
	Assume that $A$ generates a strongly continuous semigroup $T(t)$ on $X$, 
	$B \in \mathcal{B}(U,X)$, $F \in \mathcal{B}(X,U)$, and $u \in L^1([0,\tau),U)$.
	Then the mild solution of
	\[
	\dot x(t) = Ax(t)+Bu(t),\quad 0\leq t < \tau ;\qquad x(0) = x^0 \in X \\
	\]
	equals the mild solution of 
	\[
	\dot x(t) = (A+BF)x(t)+B[u(t) - Fx(t)],\quad 0\leq t < \tau;\qquad x(0) = x^0 \in X.
	\]	
\end{lemma}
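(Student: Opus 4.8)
The plan is to reduce the claim to the uniqueness of solutions of a Volterra integral equation. Since $B \in \mathcal{B}(U,X)$ and $F \in \mathcal{B}(X,U)$, the product $BF$ lies in $\mathcal{B}(X)$, so $A+BF$ is a bounded perturbation of the generator $A$ and itself generates a strongly continuous semigroup, which I denote by $S(t)$. With this notation the mild solution of the first equation is
\[
x(t) = T(t)x^0 + \int_0^t T(t-s)Bu(s)\,ds ,
\]
while the mild solution $y$ of the second equation is the unique element of $C([0,\tau),X)$ satisfying
\[
y(t) = S(t)x^0 + \int_0^t S(t-s)B\big[u(s)-Fy(s)\big]\,ds .
\]
Uniqueness here follows from the boundedness of $BF$ together with a standard Gronwall estimate, using that $u \in L^1([0,\tau),U)$ forces $x,y \in C([0,\tau),X)$. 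It therefore suffices to verify that $x$ itself satisfies this Volterra equation; the identity $x=y$ is then immediate.

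First I would record the variation-of-parameters identity linking $S$ and $T$, which is valid precisely because $BF$ is bounded:
\[
S(t) = T(t) + \int_0^t S(t-r)\,BF\,T(r)\,dr .
\]
Substituting this into the terms $S(t)x^0$ and $\int_0^t S(t-s)Bu(s)\,ds$ on the right-hand side of the Volterra equation evaluated at $x$, and inserting the explicit formula for $x(s)$ into the feedback term $-\int_0^t S(t-s)BF\,x(s)\,ds$, I obtain the ``free'' contribution $T(t)x^0 + \int_0^t T(t-s)Bu(s)\,ds$, which already reproduces $x(t)$, together with several correction integrals. The correction arising from $S(t)x^0$ cancels the part of the feedback term that carries $T(s)x^0$, since both equal $\int_0^t S(t-s)\,BF\,T(s)x^0\,ds$.

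The heart of the computation is to show that the two remaining double integrals cancel. Writing $g(s)=Bu(s)$, both reduce, after expanding the convolutions, to iterated integrals of the kernel $S(t-\rho)\,BF\,T(\rho-s)\,g(s)$ over the simplex $0 \le s \le \rho \le t$: one of them directly, and the other after a change of variables in the $S(t-s)$-correction. An application of Fubini's theorem brings them into the same form, so their difference vanishes, and the right-hand side collapses to $x(t)$. The main technical point I expect to watch is the justification of Fubini and of the interchanges of integration; these are legitimate because $T$ and $S$ are uniformly bounded on the compact interval $[0,\tau]$, the operators $B$, $F$, and $BF$ are bounded, and $u \in L^1$, so every integrand is Bochner integrable and the iterated integrals converge absolutely. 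Since $u$ is only assumed integrable rather than continuous, I would phrase all steps in terms of Bochner integrals and avoid any pointwise differentiation, which is exactly why the mild (integral) formulation, rather than a classical-solution argument, is the natural setting for this equivalence.
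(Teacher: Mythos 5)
Your argument is correct and is precisely the ``simple calculation'' the paper invokes by citing Exercise~3.3 of Curtain--Zwart: reduce the claim to the Volterra equation characterizing the mild solution for the bounded perturbation $A+BF$, apply the variation-of-parameters identity $S(t)=T(t)+\int_0^t S(t-r)\,BF\,T(r)\,dr$, and cancel the resulting double integrals over the simplex via Fubini. There are no gaps; the Bochner-integrability and Gronwall-uniqueness points you flag are exactly the right technical justifications given $u\in L^1([0,\tau),U)$ and the boundedness of $B$, $F$, and the semigroups on $[0,\tau]$.
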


Let $T_{BF}(t)$ denote the strongly continuous semigroup generated by $A+BF$. 
Since
the evolution equation \eqref{eq:plant} is rewritten as
\[
\dot x(t) = (A+BF)x(t) + \phi\big(x(t) \big) + BF [x(t_k) - x(t)],\quad t_k \leq t < t_{k+1},~k \in \mathbb{N}_0,
\]
Lemma~\ref{lem:another_rep_MS}
yields another representation of
the solution $x$ given in \eqref{eq:mild_solution_eq}:
\begin{align}
x(t_k+\tau) = T_{BF}(\tau)x(t_k) 
&+\int^\tau_0 T_{BF}(\tau-s) \phi\big(
x(t_k+s)
\big)ds \notag \\
&+
\int^\tau_0 T_{BF}(\tau-s) BF 
[
x(t_k) - x(t_k+s)
]ds 
\label{eq:x_TBF_rep}
\end{align}
for every $\tau \in (0,t_{k+1}-t_k]$ and every $k \in \mathbb{N}_0$.

The following lemma provides an upper bound of the state norm
between transmission times.
\begin{lemma}
	\label{lem:inter_sample_bounded_case}
	Assume that 
	the semigroup $T_{BF}(t)$ generated by $A+BF$ is exponentially stable,
	i.e, 
	\begin{equation}
	\label{eq:TBF_exp_stable}
	\|T_{BF}(t)\|_{\mathcal{B}(X)} \leq \Gamma e^{-\gamma t}\qquad \forall t \geq 0.
	\end{equation} 
	holds for some $\Gamma \geq 1$ 
	and $\gamma>0$.
	If the solution $x$ of the integral equation~\eqref{eq:mild_solution} 
	satisfies
	\begin{equation}
	\label{eq:Fx_diff_bound}
	\|Fx(t_k) - Fx(t_k+\tau)\|_U \leq \varepsilon \|x(t_k)\|\qquad
	\forall \tau\in [0, t_{k+1}-t_k),~ \forall k \in \mathbb{N}_0
	\end{equation}
	for some $\varepsilon >0$, then 
	\begin{equation}
	\label{eq:x_bound_nonlinear}
	\|x(t_{k}+\tau)\| \leq \Gamma e^{\Gamma L\tau} [(1-b_1\varepsilon) e^{-\gamma \tau} + b_1\varepsilon]
	\|x(t_k)\|\quad 
	\forall \tau \in [0, t_{k+1}-t_k),~ \forall k \in \mathbb{N}_0,
	\end{equation}
	where $b_1:= \|B\|_{\mathcal{B}(U,X)} /\gamma $.
\end{lemma}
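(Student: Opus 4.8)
The plan is to begin from the representation \eqref{eq:x_TBF_rep} of the solution in terms of the exponentially stable semigroup $T_{BF}(t)$, take norms of the three summands, and close the estimate with Gronwall's lemma. Fix $k \in \mathbb{N}_0$ and write $g(\tau) := \|x(t_k+\tau)\|$ for $\tau \in [0,t_{k+1}-t_k)$. The first term $\|T_{BF}(\tau)x(t_k)\|$ is at most $\Gamma e^{-\gamma\tau}\|x(t_k)\|$ by the exponential stability \eqref{eq:TBF_exp_stable}. For the second term I would use $\phi(0)=0$ together with the Lipschitz condition \eqref{eq:Lip_cond} to get $\|\phi(x(t_k+s))\| \leq L\,g(s)$, so that this term is bounded by $\Gamma L \int_0^\tau e^{-\gamma(\tau-s)} g(s)\,ds$. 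For the third term, the hypothesis \eqref{eq:Fx_diff_bound} and $B \in \mathcal{B}(U,X)$ yield $\|BF[x(t_k)-x(t_k+s)]\| \leq \|B\|_{\mathcal{B}(U,X)}\,\varepsilon\,\|x(t_k)\|$, whence, using $\int_0^\tau e^{-\gamma(\tau-s)}\,ds = (1-e^{-\gamma\tau})/\gamma$ and the definition of $b_1$, it is bounded by $\Gamma b_1 \varepsilon (1-e^{-\gamma\tau})\|x(t_k)\|$.

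Collecting these estimates gives the integral inequality
\[
g(\tau) \leq \Gamma\big[e^{-\gamma\tau} + b_1\varepsilon(1-e^{-\gamma\tau})\big]\|x(t_k)\| + \Gamma L \int_0^\tau e^{-\gamma(\tau-s)} g(s)\,ds.
\]
To put this in a form suited to Gronwall's lemma, I would multiply through by $e^{\gamma\tau}$ and set $h(\tau) := e^{\gamma\tau} g(\tau)$, obtaining
\[
h(\tau) \leq c(\tau) + \Gamma L \int_0^\tau h(s)\,ds, \qquad \text{where } c(\tau) := \Gamma\big[1 + b_1\varepsilon(e^{\gamma\tau}-1)\big]\|x(t_k)\|.
\]
Since $b_1\varepsilon \geq 0$, the forcing term $c$ is nondecreasing in $\tau$, and $\Gamma L$ is constant, so the standard Gronwall inequality yields $h(\tau) \leq c(\tau)\,e^{\Gamma L\tau}$.

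Substituting back $g(\tau) = e^{-\gamma\tau}h(\tau)$ and simplifying the scalar factor via $e^{-\gamma\tau}\big[1 + b_1\varepsilon(e^{\gamma\tau}-1)\big] = (1-b_1\varepsilon)e^{-\gamma\tau} + b_1\varepsilon$ reproduces exactly the claimed bound \eqref{eq:x_bound_nonlinear}. The computation is essentially a single Gronwall estimate, so I do not expect a genuine obstacle; the two points needing attention are, first, the applicability of \eqref{eq:Fx_diff_bound} throughout the integral — for $\tau < t_{k+1}-t_k$ every $s \in [0,\tau]$ lies in $[0,t_{k+1}-t_k)$, so the error bound is available over the whole range of integration — and, second, the recognition that the nonlinear $\phi$-term is self-referential in $g$ and must be absorbed by Gronwall rather than estimated directly, while the error term driven by \eqref{eq:Fx_diff_bound} enters only as the monotone inhomogeneity $c(\tau)$ that permits the clean form of the lemma.
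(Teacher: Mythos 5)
Your proposal is correct and follows essentially the same route as the paper's proof: start from the representation \eqref{eq:x_TBF_rep}, bound the three terms using \eqref{eq:TBF_exp_stable}, the Lipschitz condition, and \eqref{eq:Fx_diff_bound}, then multiply by $e^{\gamma\tau}$ and apply Gronwall to the rescaled quantity. The only (harmless) difference is that you make explicit the monotonicity of the inhomogeneous term needed for the version of Gronwall's inequality you invoke, a point the paper leaves implicit.
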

\begin{proof}
	Let $k \in \mathbb{N}_0$ and $\tau \in [0, t_{k+1}-t_k)$.
	Using 
	\eqref{eq:x_TBF_rep} and
	\eqref{eq:Fx_diff_bound},
	we obtain
	\begin{align*}
		\|x(t_k+\tau)\| &\leq \Gamma e^{-\gamma \tau} \|x(t_k)\| + 
		\Gamma\int^\tau_0  e^{-\gamma(\tau - s)} \big\|\phi\big(x(t_k+s) \big)\big\|ds \\
		&\qquad  + 
		\Gamma\|B \|_{\mathcal{B}(U,X)}  \int^\tau_0 e^{-\gamma(\tau - s)}\| 
		F
		x(t_k) - Fx(t_k+s)
		\|_Uds\\
		&=
		\Gamma [(1-b_1\varepsilon) e^{-\gamma \tau} + b_1\varepsilon] \|x(t_k)\| + 
		\Gamma L \int^\tau_0 e^{-\gamma (\tau -s)} \|x(t_k+s)\| ds,
	\end{align*}
	where $b_1:= \|B\|_{\mathcal{B}(U,X)} /\gamma $.
	Define $y (t_k+\tau) := e^{\gamma \tau} \|x(t_k+\tau)\|$. Then
	\[
	y(t_k+\tau) \leq \Gamma  [(1-b_1\varepsilon) + b_1\varepsilon e^{\gamma \tau}]
	y(t_k)
	+\Gamma L \int^\tau_0 y(t_k+s) ds.
	\]
	Gronwall's inequality yields
	\[
	y(t_k+\tau) \leq \Gamma e^{\Gamma L\tau} [(1-b_1\varepsilon) + b_1\varepsilon e^{\gamma \tau}]
	y(t_k).
	\]
	Thus, we obtain the desired estimate \eqref{eq:x_bound_nonlinear}.
\end{proof}

We write the coefficient of $\|x(t_k)\|$ in \eqref{eq:x_bound_nonlinear}
as $\eta_{L,\varepsilon}(\tau)$:
\begin{equation}
\label{eq:chi_def}
\eta_{L,\varepsilon}(\tau) := \Gamma e^{\Gamma L\tau} [(1-\varepsilon\|B\|_{\mathcal{B}(U,X)} /\gamma ) 
e^{-\gamma \tau} + \varepsilon\|B\|_{\mathcal{B}(U,X)} /\gamma ],\quad \tau \geq 0.
\end{equation}

Suppose that  the semigroup $T_{BF}(t)$ is exponentially stable,
i.e., \eqref{eq:TBF_exp_stable} holds for some $\Gamma \geq 1$ and $\gamma >0$. Then
we define a new norm $|\cdot|$ on $X$ by
\begin{align}
	\label{eq:new_norm_cont}
	|x| := \sup_{t\geq 0} \|e^{\gamma t} T_{BF}(t) x\|,\quad x \in X
\end{align}
as in the proof of
Theorem 3.1 in \cite{Logemann2003}.
It has been shown there that this norm satisfies
\begin{equation}
\label{eq:new_norm_cont_prop}
\|x\| \leq |x| \leq \Gamma \|x\|,\quad |T_{BF}(t) x| \leq e^{-\gamma t}  |x|\qquad \forall x \in X,~\forall t \geq 0.
\end{equation}

\subsection{Self-triggering mechanism employing input errors}
\label{sec:STM}

Throughout this and next subsections, we place the following assumption.
\begin{assumption}
	\label{assump:bounded_case}
		Assume that $A$ generates a strongly continuous semigroup $T(t)$ on $X$, 
	$B \in \mathcal{B}(U,X)$, $F \in  \mathcal{K}(X,U)$, 
	and a nonlinear operator $\phi:X\to X$ satisfies 
	the Lipschitz condition \eqref{eq:Lip_cond}.
	Moreover, assume that 
	the semigroup $T_{BF}(t)$ generated by $A+BF$ is exponentially stable,
	i.e, \eqref{eq:TBF_exp_stable} holds for some $\Gamma \geq 1$ 
	and $\gamma>0$.
\end{assumption}

We propose a self-triggering mechanism that constructs 
$\{t_k\}_{k \in \mathbb{N}_0}$ only from the data on 
the nominal linear model $(T(t),B,F)$, 
the Lipschitz constant $L$, 
and the latest transmitted state $x(t_k)$.
For $\xi \in X$ and $\tau \geq 0$, define
\[
\alpha_{L,\varepsilon}(\xi,\tau) := 
\|F(I-\Delta_{\tau}) \xi\|_U + 
L
\int_0^\tau \|FT(\tau - s)\|_{\mathcal{B}(X,U)}  \eta_{L,\varepsilon}(s) ds
\|\xi\|,
\]
where $\eta_{L,\varepsilon}$ is defined by \eqref{eq:chi_def}.
We consider the following self-triggering mechanism:
\begin{subequations}
	\label{eq:STC_Feedback}
	\noeqref{eq:STC_Feedback1,eq:STC_Feedback2}
	\begin{align}
	&t_{k+1} := t_k + 
	\min\{ \tau_{\max},~\tau_k
	\};\quad t_0 := 0  \label{eq:STC_Feedback1}\\
	&\tau_k := \inf \big\{ \tau>0:
	\alpha_{L,\varepsilon}(x(t_k),\tau) \geq \varepsilon \|x(t_k)\|
	\big\},\qquad k \in \mathbb{N}_0,\label{eq:STC_Feedback2}
	\end{align}
\end{subequations}
where $\varepsilon >0$ is a threshold parameter and 
$\tau_{\max}>0$ is
an upper bound of  inter-event times, i.e., $t_{k+1} - t_k \leq \tau_{\max}$
for every $k \in \mathbb{N}_0$.
The important feature of this mechanism is to determine 
the transmission times $\{t_k\}_{k \in \mathbb{N}_0}$
without using the present state $x(t)$.
Therefore, it can be implemented at the controller.

\begin{remark}[Role of $\tau_{\max}$]
By \eqref{eq:x_bound_nonlinear}, we only have
$\limsup_{\tau \to \infty} \|x(t_k+\tau)\| \leq 
	\Gamma b_1\varepsilon \|x(t_k)\|$
	even in the unperturbed case $\phi \equiv 0$.
To achieve exponential stability, 
we set an upper bound $\tau_{\max}$ of 
the inter-event times when we use
triggering mechanisms 
that compare 
the last-released data 
$\|x(t_k)\|$, not the present data $\|x(t)\|$, with
an implementation-induced error such as 
$Fx(t_k) - Fx(t_k+\tau)$.
In Theorem~4.2 of \cite{Wakaiki2018_EVC},
an event-triggering mechanism that compares the present data $\|x(t)\|$
with the implementation-induced error  was
investigated for infinite-dimensional systems, and 
a sufficient condition for exponential stability
was obtained with the help of the
classical Lyapunov equation.
In this theorem, however, the rather restrictive assumption that
a lower bound on the decay of $T(t)$ is  strictly positive is placed.
The recent developments of
Lyapunov functions for input-to-state stability 
(see, e.g., \cite{Mironchenko2020}) may
yield interesting results on event/self-triggered control for
infinite-dimensional systems, but we leave it for future work.
\end{remark}

To investigate
the minimum inter-event time,
we use
the following result, in which the compactness of
the feedback operator $F$ plays an important role.
\begin{lemma}[Lemma 3.5 in \cite{Wakaiki2018_EVC}]
	\label{lem:FD_conv}
	Let $T(t)$ be a strongly continuous semigroup on $X$,
	$B \in \mathcal{B}(U,X)$, and 
	$F \in  \mathcal{K}(X,U)$.
	Then the operator $\Delta_\tau \in \mathcal{B}(X)$ defined by
	\eqref{eq:Delta_def} satisfies
	\begin{equation}
	\label{eq:FD_conv}
	\lim_{\tau \downarrow 0}
	\| F(I - \Delta_\tau) \|_{\mathcal{B}(X,U)} = 0.
	\end{equation}
\end{lemma}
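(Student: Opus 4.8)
The plan is to expand $F(I - \Delta_\tau)$ using the definition $\Delta_\tau = T(\tau) + S_\tau F$ and treat the resulting two terms separately. Writing
\[
F(I - \Delta_\tau) = F\big(I - T(\tau)\big) - FS_\tau F,
\]
it suffices to show that each term tends to $0$ in $\mathcal{B}(X,U)$ as $\tau \downarrow 0$.

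The term $FS_\tau F$ is handled by a direct estimate and does not require compactness. Since $T(\cdot)$ is strongly continuous, it is bounded on $[0,1]$, say $\|T(s)\|_{\mathcal{B}(X)} \leq M_0$ for $s \in [0,1]$; hence for $\tau \in (0,1]$,
\[
\|S_\tau\|_{\mathcal{B}(U,X)} = \Big\| \int_0^\tau T(s)B\,ds \Big\|_{\mathcal{B}(U,X)} \leq M_0 \|B\|_{\mathcal{B}(U,X)}\, \tau,
\]
so that $\|FS_\tau F\|_{\mathcal{B}(X,U)} \leq \|F\|_{\mathcal{B}(X,U)}^2\, M_0 \|B\|_{\mathcal{B}(U,X)}\, \tau \to 0$.

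The essential difficulty is the term $F(I - T(\tau))$: because $T(\tau) \to I$ only in the strong operator topology and not in norm, the crude bound $\|F(I - T(\tau))\| \leq \|F\|\,\|I - T(\tau)\|$ is useless, and this is exactly where the compactness of $F$ must be used. I would exploit it by passing to Hilbert-space adjoints,
\[
\|F(I - T(\tau))\|_{\mathcal{B}(X,U)} = \|(I - T(\tau)^*)F^*\|_{\mathcal{B}(U,X)},
\]
where $F^*$ is compact because $F$ is. Since $X$ is a Hilbert space and therefore reflexive, the adjoint semigroup $T(\tau)^*$ is again strongly continuous, so $T(\tau)^* \to I$ strongly as $\tau \downarrow 0$ while remaining uniformly bounded on $[0,1]$.

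The proof then concludes via the standard principle that a uniformly bounded, strongly convergent family of operators converges in norm once composed on the right with a compact operator. Indeed, the image $\{F^* y : \|y\|_U \leq 1\}$ of the closed unit ball of $U$ is precompact in $X$; covering the closure of this set by finitely many $\epsilon$-balls around points $x_1,\dots,x_n$ and using strong convergence at these finitely many points gives
\[
\limsup_{\tau \downarrow 0} \|(I - T(\tau)^*)F^*\|_{\mathcal{B}(U,X)} \leq \Big(1 + \sup_{\tau \in [0,1]} \|T(\tau)^*\|_{\mathcal{B}(X)}\Big)\epsilon,
\]
and letting $\epsilon \downarrow 0$ yields $\|F(I - T(\tau))\| \to 0$. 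I expect this adjoint-and-precompactness step to be the main obstacle, since its whole purpose is to convert the merely strong convergence $T(\tau) \to I$ into norm convergence after composition with $F$; passing to adjoints is precisely what places the compact operator $F^*$ on the side where this conversion is available.
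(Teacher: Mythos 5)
Your proof is correct and complete. Note that the paper does not actually prove this lemma---it is quoted verbatim from Lemma~3.5 of \cite{Wakaiki2018_EVC}---but your argument (killing $FS_\tau F$ by the direct bound $\|S_\tau\|\le M_0\|B\|\tau$, which is valid since $B$ is bounded here, and converting the strong convergence $T(\tau)\to I$ into norm convergence of $F(I-T(\tau))$ by passing to the adjoint $(I-T(\tau)^*)F^*$, using Schauder's theorem, strong continuity of the adjoint semigroup on a Hilbert space, and a finite $\epsilon$-net for the precompact set $F^*(\overline{B_U})$) is exactly the standard route and contains no gaps.
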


Using this lemma, we now show that 
the minimum inter-event time of the self-triggered control system is
bounded from below by a positive constant.
Moreover, we 
provides a sufficient condition for
exponential stability.
\begin{theorem}
	\label{thm:STC_FB_stability}
	Under Assumption~\ref{assump:bounded_case},
	the following two statements hold:
	\begin{enumerate}
		\renewcommand{\labelenumi}{\alph{enumi})}
		\item For every $L \geq 0$ and $\varepsilon,\tau_{\max} >0$, 
		there exists $\theta >0$ such that for every $x^0 \in X$, 
		the increasing sequence $\{t_{k}\}_{k\in \mathbb{N}_0}$ 
		defined by the self-triggering mechanism  \eqref{eq:STC_Feedback}
		satisfies
		$
		\inf_{k \in \mathbb{N}_0} (t_{k+1} - t_k) \geq \theta.
		$
		\item 
		The system \eqref{eq:plant}
		with the self-triggering mechanism \eqref{eq:STC_Feedback}
		is exponentially stable if
		$L \geq 0$ and $\varepsilon,\tau_{\max} >0$ satisfy
		\begin{equation}
		\label{eq:L_thr_cond_STC}
		\max \left\{ 
		\frac{\Gamma}{\gamma} L ,~
		\varpi(L,\varepsilon,\tau_{\max})
		\right\} +\frac{\Gamma \|B\|_{\mathcal{B}(U,X)}}{\gamma}  \varepsilon  < 1,
		\end{equation}
		where 
		\begin{align}
		\varpi(L,\varepsilon,\tau) := 
		\frac{1}{e^{\gamma \tau}-1} 
		\bigg(
		&\left(1- \frac{\varepsilon\Gamma \|B\|_{\mathcal{B}(U,X)} }{\gamma }\right)\
		(e^{\Gamma L \tau} - 1)\notag \\
		&\quad \qquad + 
		\frac{\varepsilon\Gamma  \|B\|_{\mathcal{B}(U,X)} }{\gamma } \frac{\Gamma L(e^{(\Gamma L+\gamma) \tau} - 1)}{\Gamma L
			+\gamma}		
		\bigg).		\label{eq:theta_def}
		\end{align}
	\end{enumerate}
\end{theorem}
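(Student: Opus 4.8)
The plan is to prove the two assertions by separate mechanisms: the compactness of $F$ for the dwell-time bound in (a), and a discrete-time contraction in the adapted norm $|\cdot|$ of \eqref{eq:new_norm_cont} for the stability in (b). For \textbf{part (a)}, I would first record the uniform majorant $\alpha_{L,\varepsilon}(\xi,\tau)\le g(\tau)\|\xi\|$, valid for every $\xi\in X$, where $g(\tau):=\|F(I-\Delta_\tau)\|_{\mathcal{B}(X,U)}+L\int_0^\tau\|FT(\tau-s)\|_{\mathcal{B}(X,U)}\,\eta_{L,\varepsilon}(s)\,ds$ does not depend on $\xi$. The first summand of $g$ tends to $0$ as $\tau\downarrow0$ by Lemma~\ref{lem:FD_conv}, which is exactly where the compactness of $F$ enters; the second tends to $0$ because its integrand is bounded on compact $\tau$-intervals. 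Hence $g(\tau)\to0$ as $\tau\downarrow0$, so there is $\theta>0$ with $g(\tau)<\varepsilon$ on $[0,\theta)$. For any $x(t_k)\neq0$ the event $\alpha_{L,\varepsilon}(x(t_k),\tau)\ge\varepsilon\|x(t_k)\|$ forces $g(\tau)\ge\varepsilon$ after dividing by $\|x(t_k)\|$, which is impossible on $[0,\theta)$; therefore $\tau_k\ge\theta$ and $t_{k+1}-t_k\ge\min\{\tau_{\max},\theta\}$, uniformly in $k$ and in $x^0$. (The degenerate case $x(t_k)=0$, in which $x\equiv0$ thereafter, is handled by the convention $t_{k+1}=t_k+\tau_{\max}$.)

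For \textbf{part (b)}, the first step is to convert the self-triggering rule \eqref{eq:STC_Feedback} into the input-error bound \eqref{eq:Fx_diff_bound}. From \eqref{eq:x_Delta} one has $Fx(t_k)-Fx(t_k+\tau)=F(I-\Delta_\tau)x(t_k)-\int_0^\tau FT(\tau-s)\phi(x(t_k+s))\,ds$. I would argue by continuation on the largest subinterval of $[0,t_{k+1}-t_k)$ on which \eqref{eq:Fx_diff_bound} holds: there Lemma~\ref{lem:inter_sample_bounded_case} gives $\|x(t_k+s)\|\le\eta_{L,\varepsilon}(s)\|x(t_k)\|$, and the Lipschitz condition \eqref{eq:Lip_cond} then yields $\|Fx(t_k)-Fx(t_k+\tau)\|_U\le\alpha_{L,\varepsilon}(x(t_k),\tau)$, which is $<\varepsilon\|x(t_k)\|$ for $\tau<\tau_k$ by construction; hence \eqref{eq:Fx_diff_bound} cannot first be violated, so it holds on all of $[0,t_{k+1}-t_k)$, and $\|x(t_k+\tau)\|\le\eta_{L,\varepsilon}(\tau)\|x(t_k)\|$ throughout each inter-event interval. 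This self-consistency (Lemma~\ref{lem:inter_sample_bounded_case} needs \eqref{eq:Fx_diff_bound}, which is in turn certified through $\eta_{L,\varepsilon}$) is the reason the continuation argument is needed.

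Next I would establish a uniform contraction at the transmission times in the norm $|\cdot|$. Applying $|\cdot|$ to \eqref{eq:x_TBF_rep} and using \eqref{eq:new_norm_cont_prop}, the bound just obtained, and \eqref{eq:Fx_diff_bound}, careful bookkeeping of the constants $\Gamma$, $\gamma$, $\|B\|_{\mathcal{B}(U,X)}$ produces $|x(t_k+\tau)|\le\Phi(\tau)\,|x(t_k)|$ with an explicit $\Phi$ for which $\Phi(\tau)<1$ is \emph{equivalent} to $\varpi(L,\varepsilon,\tau)+\Gamma\|B\|_{\mathcal{B}(U,X)}\varepsilon/\gamma<1$; the factor $1/(e^{\gamma\tau}-1)$ in \eqref{eq:theta_def} arises precisely from isolating the contractive term $e^{-\gamma\tau}$ and dividing the remaining error terms by $1-e^{-\gamma\tau}$, while the exponent $\Gamma L+\gamma$ comes from integrating $\eta_{L,\varepsilon}$, which carries $e^{\Gamma Ls}$, against the kernel $e^{\gamma s}$. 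Since the realized inter-event time $t_{k+1}-t_k$ may be \emph{any} value in $(0,\tau_{\max}]$, I must bound $\sup_{0<\tau\le\tau_{\max}}\varpi(L,\varepsilon,\tau)$. A short computation gives $\lim_{\tau\downarrow0}\varpi(L,\varepsilon,\tau)=\Gamma L/\gamma$, which is the first entry of the maximum in \eqref{eq:L_thr_cond_STC}, so it remains to show that the supremum is attained at an endpoint, i.e. equals $\max\{\Gamma L/\gamma,\varpi(L,\varepsilon,\tau_{\max})\}$. I expect this extremal analysis to be the main obstacle: writing $\varpi=f/g$ with $f,g$ vanishing at $0$, the derivative ratio $f'/g'$ is a positive combination of two exponentials and hence has at most an interior minimum but no interior maximum, so by an l'H\^opital-type monotone-ratio argument neither does $\varpi$; consequently its supremum over $(0,\tau_{\max}]$ sits at $0^+$ or at $\tau_{\max}$.

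Finally, under \eqref{eq:L_thr_cond_STC} the constant $\mu:=\sup_{0<\tau\le\tau_{\max}}\Phi(\tau)$ is strictly less than $1$, giving $|x(t_{k+1})|\le\mu\,|x(t_k)|$ and hence $|x(t_k)|\le\mu^{k}\,|x^0|$ for all $k$. Using $t_k\le k\tau_{\max}$ this is geometric decay of $\|x(t_k)\|$ at the transmission times, and combining it with the inter-event bound $\|x(t_k+\tau)\|\le\eta_{L,\varepsilon}(\tau)\|x(t_k)\|\le\big(\max_{[0,\tau_{\max}]}\eta_{L,\varepsilon}\big)\|x(t_k)\|$ together with the norm equivalence \eqref{eq:new_norm_cont_prop} yields, after relabelling the constants, the exponential estimate $\|x(t)\|\le\Gamma e^{-\gamma t}\|x^0\|$ demanded by the definition of exponential stability.
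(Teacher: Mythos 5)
Your overall route coincides with the paper's: part a) via the uniform majorant $g(\tau)$ and Lemma~\ref{lem:FD_conv}, and part b) via the no-first-violation argument for \eqref{eq:Fx_diff_bound}, the adapted norm $|\cdot|$ applied to \eqref{eq:x_TBF_rep}, and the endpoint analysis of $\varpi$. However, your final step contains a genuine flaw. You set $\mu:=\sup_{0<\tau\le\tau_{\max}}\Phi(\tau)$ and assert $\mu<1$, where (unwinding your bookkeeping) $\Phi(\tau)=e^{-\gamma\tau}+\beta_{\rm s}(1-e^{-\gamma\tau})=(1-\beta_{\rm s})e^{-\gamma\tau}+\beta_{\rm s}$ with $\beta_{\rm s}<1$ guaranteed by \eqref{eq:L_thr_cond_STC}. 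This function is strictly decreasing with $\Phi(\tau)\to1$ as $\tau\downarrow0$: the error terms indeed vanish like $1-e^{-\gamma\tau}$, but the contractive term $e^{-\gamma\tau}$ simultaneously tends to $1$ at the same rate, so $\sup_{0<\tau\le\tau_{\max}}\Phi(\tau)=1$ and no uniform per-step contraction factor exists over $(0,\tau_{\max}]$. As written, $|x(t_{k+1})|\le\mu|x(t_k)|$ with $\mu<1$ does not follow, and the chain $|x(t_k)|\le\mu^k|x^0|$ collapses.

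Two repairs are available. The paper's device is to show that $f_{\rm s}(\tau):=-\tau^{-1}\log\bigl(e^{-\gamma\tau}+\beta_{\rm s}(1-e^{-\gamma\tau})\bigr)$ is positive and monotonically decreasing on $(0,\infty)$, whence $\Phi(\tau)\le e^{-\gamma_0\tau}$ with $\gamma_0:=f_{\rm s}(\tau_{\max})>0$ for all $\tau\in(0,\tau_{\max}]$; the decay per step is then proportional to the elapsed time $t_{k+1}-t_k$, and induction gives $|x(t_k)|\le e^{-\gamma_0 t_k}|x^0|$ directly, with no need for a uniform $\mu$. Alternatively, you can invoke your own part a): since $t_{k+1}-t_k\ge\theta':=\min\{\theta,\tau_{\max}\}>0$ for every $k$, the realized inter-event times lie in the compact set $[\theta',\tau_{\max}]$ on which $\Phi$ attains its maximum $\Phi(\theta')<1$, and then your argument via $t_k\le k\tau_{\max}$ goes through verbatim with $\mu=\Phi(\theta')$. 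Either fix is short, but one of them must be made explicit; the rest of your proposal (including the extremal analysis showing $\sup_{0<\tau\le\tau_{\max}}\varpi=\max\{\Gamma L/\gamma,\varpi(L,\varepsilon,\tau_{\max})\}$, which the paper dismisses as a routine calculation) is sound and matches the paper's proof.
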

\begin{proof}
	a) 
	Let $L \geq 0$, and
	 $\varepsilon,\tau_{\max} >0$  be given.
	For the first term of $\alpha_{L,\varepsilon}$, we obtain
	\[
	\|F(I-\Delta_\tau)\xi\|_U  \leq \|F(I-\Delta_\tau)\|_{\mathcal{B}(X,U)}  ~\!\|\xi\|
	\qquad \forall \xi \in X,
	\]
	and 
	$
	\lim_{\tau \downarrow 0}\|F(I-\Delta_{\tau})\|_{\mathcal{B}(X,U)}  = 0
	$
	by Lemma~\ref{lem:FD_conv}.
	Moreover, the integral term of $\alpha_{L,\varepsilon}$,
	\[
	\int_0^\tau \|FT(\tau - s)\|_{\mathcal{B}(X,U)} ~\!
	\eta_{L,\varepsilon}(s) ds,
	\]
	is continuous with respect to $\tau$ (see, e.g., Proposition~1.3.2 on p.~22
	of \cite{Arendt2001} for 
	the continuity property of convolutions) and 
	goes to 0 as $\tau \to 0$.
	Hence $t_1 - t_0 \geq \theta$ for some $\theta >0$, and
	$\theta$ does not depend on the initial state $x^0$.
	Since
	$x(t_k) \in X$ for every $k \in \mathbb{N}$,
	we obtain $\inf_{k \in \mathbb{N}_0} (t_{k+1} - t_k)  \geq \theta$
	by induction.

	b)
	We first show that 
	\begin{equation}
	\label{eq:input_diff_STC_tk}
	\|Fx(t_k) - Fx(t_k+\tau)\|_U \leq \varepsilon \|x(t_k)\|\qquad \forall \tau \in [0,t_{k+1}-t_k),~\forall k \in \mathbb{N}_0.
	\end{equation}
	Assume, to get a contradiction, that 
	there exists $k \in \mathbb{N}_0$ such that 
	\[
	\tau_1 := \inf \{ \tau \geq 0:
	\|Fx(t_k) - Fx(t_k+\tau)\|_U >  \varepsilon \|x(t_k)\|
	\} \in [0,t_{k+1}-t_k).
	\]
	By the continuity of $x$, 
	\begin{equation}
	\label{eq:input_diff_STC}
	\|Fx(t_k) - Fx(t_k+\tau_1)\|_U =  \varepsilon \|x(t_k)\|.
	\end{equation}
	Moreover,
	\[
	\|Fx(t_k) - Fx(t_k+\tau)\|_U  \leq   \varepsilon \|x(t_k)\|\qquad
	\forall \tau \in [0,\tau_1],
	\]
	and hence \eqref{eq:x_Delta}
	and  Lemma~\ref{lem:inter_sample_bounded_case} 
	yield
	\begin{align*}
	\|Fx(t_k) - Fx(t_k+\tau)\|_U 
	&\leq
	\alpha_{L,\varepsilon} (x(t_k),\tau)\qquad 
	\forall \tau \in [0, \tau_1].
	\end{align*}
	Since $\tau_1 < t_{k+1} - t_k$, it follows from 
	the self-triggering mechanism \eqref{eq:STC_Feedback} 
	that 
	\[
	\alpha_{L,\varepsilon} (x(t_k),\tau) < \varepsilon \|x(t_k)\|\qquad 
	\forall \tau \in [0, \tau_1].
	\]
	This implies that 
	\[
	\|Fx(t_k) - Fx(t_k+\tau_1)\|_U <  \varepsilon \|x(t_k)\|,
	\]
	which contradicts \eqref{eq:input_diff_STC}.

	Using the same argument as in the proof of 
	Lemma~\ref{lem:inter_sample_bounded_case},
	we have from \eqref{eq:new_norm_cont_prop} and \eqref{eq:input_diff_STC_tk} that
	\[
	|x(t_k+\tau)| \leq e^{\Gamma L \tau} 
	[(1-b_{\rm s} \varepsilon) e^{-\gamma \tau} + b_{\rm s} \varepsilon] ~\!|x(t_k)|
	\]
	for every $\tau \in (0,t_{k+1}-t_k]$ and every $k \in \mathbb{N}_0$,
	where $b_{\rm s} := \Gamma \|B\|_{\mathcal{B}(U,X)} /\gamma $.
	Therefore,
	\begin{align*}
	\left|
	\int^\tau_0 T_{BF}(\tau-s) \phi\big(
	x(t_k+s)
	\big)ds
	\right| &\hspace{-0.6pt}\leq\hspace{-0.6pt}
	\Gamma L \int^{\tau}_0 e^{-\gamma(\tau-s)} |x(t_k+s)| ds  \\
	&\hspace{-0.6pt}\leq \hspace{-0.6pt}
	\Gamma L
	\int^{\tau}_0 e^{-\gamma(\tau-s)}  e^{\Gamma L s} [(1-b_{\rm s}) \varepsilon e^{-\gamma s} + b_{\rm s} \varepsilon] ds|x(t_k)|
	\end{align*}
	for every $\tau \in (0,t_{k+1}-t_k]$ and every $k \in \mathbb{N}_0$.
	Note that 
	$\varpi(L,\varepsilon,\tau)$ defined by
	\eqref{eq:theta_def} satisfies
	\[
	\varpi(L,\varepsilon,\tau)=
	\frac{\Gamma L}{1-e^{-\gamma \tau}} 
	\int^{\tau}_0 e^{-\gamma(\tau-s)}  e^{\Gamma L s} [(1-b_{\rm s} \varepsilon) e^{-\gamma s} + b_{\rm s} \varepsilon]   ds.
	\]
	Moreover, a routine calculation shows that 
	\begin{align*}
	\sup_{0< \tau \leq \tau_{\max}}
	\varpi(L,\varepsilon,\tau) &= 
	\max\left\{
	\lim_{\tau \downarrow 0} \varpi(L,\varepsilon,\tau), ~	\varpi(L,\varepsilon,\tau_{\max})
	\right\}\\
	&=
	\max \left\{ 
	\frac{\Gamma}{\gamma} L ,~
	\varpi(L,\varepsilon,\tau_{\max})
	\right\}  =: \varpi_{\rm s}(L,\varepsilon,\tau_{\max}).
	\end{align*}
	It follows that 
	\begin{equation}
	\label{eq:STC_nonlinear_bound}
	\left|
	\int^\tau_0 T_{BF}(\tau-s) \phi\big(
	x(t_k+s)
	\big)ds
	\right| \leq \varpi_{\rm s}(L,\varepsilon,\tau_{\max}) (1-e^{-\gamma \tau}) |x(t_k)|
	\end{equation}
	for every $\tau \in (0,t_{k+1}-t_k]$ and every $k \in \mathbb{N}_0$.
	
	Using \eqref{eq:new_norm_cont_prop} and \eqref{eq:input_diff_STC_tk} 
	again, we obtain
	\begin{equation}
	\label{eq:STM_error}
	\left|
	\int^\tau_0 T_{BF}(\tau-s) BF [
	x(t_k) - x(t_k+s)]ds
	\right|	\leq b_{\rm s} \varepsilon (1-e^{-\gamma \tau}) |x(t_k)|
	\end{equation}
	for every $\tau \in (0,t_{k+1}-t_k]$ and every $k \in \mathbb{N}_0$.
	Applying these 
	estimates \eqref{eq:STC_nonlinear_bound}
	and \eqref{eq:STM_error}
	to \eqref{eq:x_TBF_rep}, we have
	\[
	|x(t_k+\tau)| \leq e^{-\gamma \tau} |x(t_k)| + 
	(\varpi_{\rm s}(L,\varepsilon,\tau_{\max}) + b_{\rm s} \varepsilon) (1-e^{-\gamma \tau})  |x(t_k)|
	\]
	for every $\tau \in (0,t_{k+1}-t_k]$ and every $k \in \mathbb{N}_0$.
	By the condition \eqref{eq:L_thr_cond_STC},
	$
	\beta_{\rm s} (L,\varepsilon,\tau_{\max})  :=
	\varpi_{\rm s}(L,\varepsilon,\tau_{\max}) + b_{\rm s} \varepsilon < 1.
	$
	Define
\[
f_{\rm s}(\tau) := \frac{-\log \big(e^{-\gamma \tau} +
	\beta_{\rm s} (L,\varepsilon,\tau_{\max})  (1-e^{-\gamma \tau}) \big)}{\tau}.
\]
Since 
\begin{align*}
e^{-\gamma \tau} +
\beta_{\rm s} (L,\varepsilon,\tau_{\max})  (1-e^{-\gamma \tau}) &=
[1-\beta_{\rm s} (L,\varepsilon,\tau_{\max})]e^{-\gamma \tau} + \beta_{\rm s} (L,\varepsilon,\tau_{\max}) < 1
\end{align*}
for all $\tau >0$,
it follows that $f_{\rm s}(\tau) >0$ for every $\tau>0$.
Moreover, a straightforward calculation shows that 
$f_{\rm s}$ is monotonically decreasing on $(0,\infty)$.
Hence
\[
|x(t_k+\tau)| \leq e^{-\gamma_0 \tau} |x(t_k)|\qquad 
\forall \tau \in (0,t_{k+1}-t_k],~\forall k\in \mathbb{N}_0,
\]
where $\gamma_0 := f_{\rm s}(\tau_{\max}) > 0$.
By induction on $k\in \mathbb{N}_0$, we obtain
\[
\|x(t_k+\tau)\| \leq 
|x(t_k+\tau)| \leq 
e^{-\gamma_0 \tau} |x(t_k)| \leq
\Gamma e^{-\gamma_0 (t_k+\tau)} \|x^0\|
\]
for every $x^0 \in X$, $\tau \in (0,t_{k+1}-t_k]$, and
$k \in \mathbb{N}_0$.
Thus, the system \eqref{eq:plant}
with the self-triggering mechanism \eqref{eq:STC_Feedback}
is exponentially stable.
\end{proof}

\begin{remark}[Dependence on $\tau_{\max}$]
	The upper bound $\tau_{\max}$ of
	inter-event times affects the performance of the closed-loop system
	in the following two ways.
	First, 	$L$ and $\varepsilon$
	depend on $\tau_{\max}$ in \eqref{eq:L_thr_cond_STC}
	if $\varpi(L,\varepsilon,\tau_{\max}) \geq \Gamma L/ \gamma$,
	which occurs, e.g., for large $\tau_{\max}$ and $L$.
	Second, 
	the lower bound $\gamma_0$ of
	the decay rate  of the self-triggered control system 
	becomes smaller as $\tau_{\max}$ increases.
\end{remark}

\subsection{Event-triggering mechanism enforcing minimal inter-event time}
We define the increasing sequence $\{t_k\}_{k\in\mathbb{N}_0}$ by
\begin{subequations}
	\label{eq:ETC_LB}
	\noeqref{eq:ETM_LB1,eq:ETM_LB2}
	\begin{align}
	&t_{k+1} := 
	\min\{ t_k + \tau_{\max},~{\bar t}_{k+1}
	\} \label{eq:ETM_LB1};\quad t_0 := 0
	\\
	&{\bar t}_{k+1} := \inf \big\{ t> t_k + \tau_{\min}:
	\|x(t_k) - x(t)\| > \varepsilon \|x(t_k)\|
	\big\},\quad  k \in \mathbb{N}_0, \label{eq:ETM_LB2}
	\end{align}
\end{subequations}
where $\varepsilon \geq 0$
is a threshold parameter and 
 $\tau_{\max} > \tau_{\min} >0$ are upper and lower 
 bounds on inter-event times, respectively, i.e., 
 $\tau_{\min} \leq t_{k+1} - t_k \leq \tau_{\max}$
 for every $k \in \mathbb{N}_0$.
Here we consider the situation where 
high-performance sensors are used for the continuous measurement 
of the state but
the capacity of communication channels and the actuator capability 
are limited, i.e.,
we cannot transmit data or update control inputs so frequently.
Practically, $\tau_{\min}$ is first determined,
and then we choose the threshold $\varepsilon$ so that the event-triggered control system is exponentially stable.

If the threshold $\varepsilon = 0$, then
the event-triggered control system with the mechanism \eqref{eq:ETC_LB}
can be regarded as a periodic sampled-data system with 
sampling period $\tau_{\min}$.
Unlike the case of periodic sampling, 
the sensor needs to measure the state $x(t)$ continuously  after $t>t_{\min}$ in  
the event-triggering mechanism \eqref{eq:ETC_LB}.
Therefore, the processing load of 
the sensor is high in the event-triggered control system.
However, the event-triggering mechanism \eqref{eq:ETC_LB}
has a potential to 
reduce the number of data transmissions, because
it determines transmission times depending on the state $x(t)$.
We see it from numerical simulations in Section~\ref{sec:numerical_example}.

We first show that 
a suitable choice of the threshold $\varepsilon$ and
the lower bound $\tau_{\min}$ of inter-event times
makes the event-triggered control system exponentially stable
under all sufficiently small Lipschitz perturbations.
\begin{theorem}
	\label{thm:ETC_LB_stability}
	Suppose that Assumption~\ref{assump:bounded_case} holds.
	For every $\tau_{\max} >0$,
	there exist constants
	$L^*, \varepsilon^* >0$ and $\tau_{\min}^* \in (0,\tau_{\max})$ such that 
	the system \eqref{eq:plant}
	with the event-triggering mechanism \eqref{eq:ETC_LB}
	is exponentially stable for 
	every $L \in [0,L^*]$, $\varepsilon \in [0,\varepsilon^*]$, and
	$\tau_{\min} \in (0,\tau_{\min}^*]$.
\end{theorem}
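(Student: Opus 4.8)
The plan is to establish a uniform per-step contraction in the equivalent norm $|\cdot|$ of \eqref{eq:new_norm_cont}: I will show that if $L$, $\varepsilon$, and $\tau_{\min}$ are small enough, then there is a constant $b<1$ (independent of $k$) with
\[
|x(t_k+\tau)| \leq \big[(1-b)e^{-\gamma\tau}+b\big]\,|x(t_k)|, \qquad \tau\in(0,t_{k+1}-t_k].
\]
Once this holds, the conclusion follows verbatim from the end of the proof of Theorem~\ref{thm:STC_FB_stability}: the map $\tau\mapsto -\log\big((1-b)e^{-\gamma\tau}+b\big)/\tau$ is positive and monotonically decreasing, so setting $\gamma_0$ equal to its value at $\tau_{\max}$ gives $|x(t_k+\tau)|\leq e^{-\gamma_0\tau}|x(t_k)|$, and induction together with $\|x\|\leq|x|\leq\Gamma\|x\|$ from \eqref{eq:new_norm_cont_prop} yields $\|x(t)\|\leq\Gamma e^{-\gamma_0 t}\|x^0\|$. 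Note that positivity of the minimum inter-event time is automatic here, since \eqref{eq:ETC_LB} enforces $t_{k+1}-t_k\geq\tau_{\min}$ by construction; no separate Zeno analysis is needed.

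The starting point is the representation \eqref{eq:x_TBF_rep}, whose three terms I bound in $|\cdot|$ using $|T_{BF}(t)x|\leq e^{-\gamma t}|x|$. The first term contributes $e^{-\gamma\tau}|x(t_k)|$. For the other two I first record a crude a priori bound: applying Gronwall's inequality to \eqref{eq:x_Delta} gives $\|x(t_k+s)\|\leq M\|x(t_k)\|$ for all $s\in[0,\tau_{\max}]$, where $M$ depends only on $\tau_{\max}$, $L$, and the growth bounds of $T(\cdot)$ and $\Delta_{(\cdot)}$; restricting once and for all to $L\leq 1$ makes $M$ a fixed constant $M^*$. The perturbation integral is then bounded, exactly as in Lemma~\ref{lem:inter_sample_bounded_case}, by $c_\phi L\,(1-e^{-\gamma\tau})\,|x(t_k)|$ for a constant $c_\phi$.

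The crux is the input-error integral $\int_0^\tau T_{BF}(\tau-s)BF[x(t_k)-x(t_k+s)]\,ds$, which I split at $s=\tau_{\min}$. On the \emph{active phase} $s\in[\tau_{\min},\tau]$ the triggering rule \eqref{eq:ETM_LB2} and continuity of $x$ give $\|x(t_k)-x(t_k+s)\|\leq\varepsilon\|x(t_k)\|$, so this part is bounded by $c_B\|F\|\varepsilon\,(1-e^{-\gamma\tau})\,|x(t_k)|$ for a constant $c_B$. The difficulty, and the reason $\tau_{\min}$ must be chosen small, is the \emph{dead-time phase} $s\in[0,\tau_{\min}]$, on which the triggering condition provides no control of the error. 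Here I do not bound $\|x(t_k)-x(t_k+s)\|$ directly (that would fail uniformly, because $T(\cdot)$ is only strongly continuous); instead I exploit that only $F[x(t_k)-x(t_k+s)]$ enters, through the factor $BF$. Writing $x(t_k)-x(t_k+s)=(I-\Delta_s)x(t_k)-\int_0^s T(s-r)\phi(x(t_k+r))\,dr$ from \eqref{eq:x_Delta} and applying $F$, the compactness of $F$ via Lemma~\ref{lem:FD_conv} forces $\|F(I-\Delta_s)\|_{\mathcal{B}(X,U)}\to 0$, while the perturbation piece is $O(s)$; hence $\|F[x(t_k)-x(t_k+s)]\|_U\leq\zeta(\tau_{\min})\|x(t_k)\|$ on $[0,\tau_{\min}]$, where $\zeta(\tau_{\min})$ is the supremum of this bound over the interval and $\zeta(\tau_{\min})\to 0$ as $\tau_{\min}\downarrow 0$. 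Integrating over the short interval bounds the dead-time part by $c_B\zeta(\tau_{\min})\,(1-e^{-\gamma\tau})\,|x(t_k)|$.

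Assembling the four estimates gives the claimed inequality with $b=c_\phi L+c_B\|F\|\varepsilon+c_B\zeta(\tau_{\min})$. Since $c_\phi$, $c_B$, $\|F\|$ are fixed once $L\leq 1$, and $\zeta(\tau_{\min})\to 0$, I can choose $L^*$, $\varepsilon^*$, $\tau_{\min}^*$ so small that each of the three summands stays below $1/3$, whence $b<1$ for all admissible $L\in[0,L^*]$, $\varepsilon\in[0,\varepsilon^*]$, $\tau_{\min}\in(0,\tau_{\min}^*]$, which finishes the argument. The main obstacle is precisely the dead-time estimate: the whole content of the theorem is that the unmonitored interval $[t_k,t_k+\tau_{\min}]$ can be tolerated, and the device that makes this possible is the compactness of $F$, which upgrades the merely strong continuity of $T(\cdot)$ into the norm estimate $\|F(I-\Delta_s)\|_{\mathcal{B}(X,U)}\to 0$.
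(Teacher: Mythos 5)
Your proof is correct and follows essentially the same route as the paper's: the same representation \eqref{eq:x_TBF_rep} estimated in the weighted norm \eqref{eq:new_norm_cont}, the same split of the input-error integral at $s=\tau_{\min}$, the same use of Lemma~\ref{lem:FD_conv} (compactness of $F$) together with a Gronwall a priori bound to control the unmonitored dead-time interval, and the same contraction-plus-induction conclusion. The only divergence is in the bookkeeping: you keep the factor $\int_0^{\tau_{\min}}e^{-\gamma(\tau-s)}\,ds\leq(1-e^{-\gamma\tau})/\gamma$ so that all three error contributions take the common form $(\mathrm{const})\,(1-e^{-\gamma\tau})$ and the final step reduces to requiring the sum of constants to be less than $1$, whereas the paper bounds the dead-time term by the constant $\tau_{\min}\,g(L,\tau_{\min})$ and absorbs it through the sharper $\kappa_1/\kappa_2$ analysis, which is what produces the explicit parameter conditions recorded in Remark~\ref{rem:ETC_cond_summary} and Corollary~\ref{coro:ETC_LB_stability}.
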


\begin{proof}
	In the proof, we first investigate
	the integral terms in the mild solution 
	\eqref{eq:x_TBF_rep} and
	obtain upper bounds of their norms $|\cdot|$ defined by \eqref{eq:new_norm_cont}. 
	Using these upper bounds, we next prove that 
	\begin{align}
	\label{eq:x_estimate_ETC}
	|x(t_k+\tau)| &\leq
	e^{-\gamma_0 \tau} |x(t_k)|\qquad \forall \tau \in [\tau_{\min},t_{k+1}-t_k],~\forall k\in \mathbb{N}_0
	\end{align}
	for some $\gamma_0 >0$.
	Finally, we show that 
	the event-triggered control system is exponentially stable, by using
	the above estimate \eqref{eq:x_estimate_ETC} and the properties \eqref{eq:new_norm_cont_prop} of the norm $|\cdot|$.
	
	1. 
	Under the event-triggering mechanism  \eqref{eq:ETC_LB}, the solution $x$
	of the integral equation~\eqref{eq:mild_solution} satisfies
	\[
	\|x(t_k) - x(t_k+s)\| \leq \varepsilon \|x(t_k)\|\qquad
	\forall s \in [\tau_{\min}, t_{k+1}-t_k),~\forall k \in \mathbb{N}_0.
	\]
	Note that the above inequality may  not hold for $s \in (0,\tau_{\min})$.
	Therefore, compared with the case of 
	the event-triggering mechanism \eqref{eq:ETM1_intro} 
	studied in the previous study \cite{Wakaiki2018_EVC},
	the careful estimate of the term in the mild solution \eqref{eq:x_TBF_rep}, 
	\[
	\int^\tau_{0} T_{BF}(\tau-s) BF 
	[
	x(t_k) - x(t_k+s)
	]ds ,
	\]
	is required.
	
	From the properties \eqref{eq:new_norm_cont_prop} of the norm $|\cdot|$,
	it follows that for every $\tau \in [\tau_{\min}, t_{k+1}-t_k]$,
	\begin{align}
	&\left|
	\int^\tau_{\tau_{\min}} T_{BF}(\tau-s) BF 
	[
	x(t_k) - x(t_k+s)
	]ds 
	\right| \notag \\
	& \qquad \qquad \leq 
	\Gamma \|BF\|_{\mathcal{B}(X)} \int^\tau_{\tau_{\min}} e^{-\gamma(\tau-s)} 
	\|
	x(t_k) - x(t_k+s)
	\| ds \notag \\
	&\qquad \qquad \leq
	b_{\rm e}\varepsilon \big(1-e^{-\gamma(\tau-\tau_{\min} )}\big) |x(t_k)|,
	\label{eq:tau_min_above}
	\end{align}
	where
	$
	b_{\rm e} :=  \Gamma \|BF\|_{\mathcal{B}(X)} /\gamma.
	$
	To estimate 
	\[
	\left|
	\int^{\tau_{\min}}_0 T_{BF}(\tau-s) BF 
	[
	x(t_k) - x(t_k+s)
	]ds 
	\right|,\qquad \tau_{\min} \leq  \tau \leq t_{k+1}-t_k,
	\]
	we define
	\begin{equation}
	\label{eq:c1_c2_def}
	c_1 := c_1(\tau_{\min}) = 
	\sup_{0\leq \tau \leq \tau_{\min}} \|\Delta_\tau\|,\quad 
	c_2 := c_2(\tau_{\min}) = \sup_{0\leq \tau \leq \tau_{\min}} \|T(\tau)\|.
	\end{equation}
	By \eqref{eq:x_Delta} and Gronwall's inequality, 
	\begin{align}
	\label{eq:x_bound_tau_small}
	\|x(t_k+\tau)\| \leq c_1 \|x(t_k)\| + c_2L \int^\tau_0 \|x(t_k+s)\| ds 
	\leq c_1 e^{c_2L\tau} \|x(t_k)\|
	\end{align}
	for every $\tau \in [0,\tau_{\min}]$ and every $k \in \mathbb{N}_0$.
	Therefore, using \eqref{eq:x_Delta} again, we obtain
	\begin{align*}
	\|BF[x(t_k) - x(t_k+\tau)]\| 
	&\leq 
	\left(
	\|BF(I-\Delta_\tau)\|_{\mathcal{B}(X)} + c_1\|BF\|_{\mathcal{B}(X)} 
	(e^{c_2L\tau} - 1)
	\right) \|x(t_k)\|.
	\end{align*}
	Define 
	\[
	g(L,\tau_{\min}) := \Gamma \sup_{0\leq \tau\leq \tau_{\min}}
	\left(\|BF(I-\Delta_\tau)\|_{\mathcal{B}(X)} 
	+ c_1\|BF\|_{\mathcal{B}(X)} 
	(e^{c_2L\tau} - 1) \right).
	\]
	The properties \eqref{eq:new_norm_cont_prop} of the norm $|\cdot|$ yield
	\begin{align}
	\label{eq:tau_min_below}
	\left| \int^{\tau_{\min}}_0
	T_{BF}(\tau-s) BF[ 
	x(t_k) - x(t_k+s)
	]ds
	\right| \leq \tau_{\min} g(L,\tau_{\min}) |x(t_k)|
	\end{align}
	for every $\tau \in [\tau_{\min}, t_{k+1}-t_k]$.
	
	To estimate the other integral term
	in the mild solution \eqref{eq:x_TBF_rep}, 
	\[
		\int^\tau_0 T_{BF}(\tau-s) \phi\big(
	x(t_k+s)
	\big)ds,
	\]
	first note that, 
	in the same way as in the proof of Lemma~\ref{lem:inter_sample_bounded_case},
	one can obtain 
	\[
	\|x(t_k+\tau)\| \leq 
	\Gamma e^{(\Gamma L - \gamma)(\tau- \tau_{\min}) } \|x(t_k+\tau_{\min})\|+ 
	\widetilde \eta_{L,\varepsilon} (\tau - \tau_{\min}) \|x(t_k)\| 
	\]
	for every $\tau \in [\tau_{\min}, t_{k+1}-t_k)$,
	where 
	\[
		\widetilde \eta_{L,\varepsilon} (\tau) := 
		\frac{
		\varepsilon \Gamma \|BF\|_{\mathcal{B}(X)}}{\gamma}
	\big(e^{\Gamma L \tau}- 
	e^{(\Gamma L - \gamma)\tau }\big).
	\]
	Combining this with \eqref{eq:x_bound_tau_small}, we obtain
	\begin{equation}
	\label{eq:x_bound_ETC}
	\|x(t_k+\tau)\| \leq \Upsilon_{L,\varepsilon,\tau_{\min}} (\tau) \|x(t_k)\|
	\qquad
	\forall \tau \in [0, t_{k+1}-t_k),
	\end{equation}
	where 
	\[
	\Upsilon_{L,\varepsilon,\tau_{\min}} (\tau) :=
	\begin{cases}
	c_1e^{c_2L\tau}, & 0\leq \tau \leq \tau_{\min}, \\
	c_1\Gamma e^{c_2L\tau_{\min}}
	e^{(\Gamma L - \gamma)(\tau- \tau_{\min}) }+
	\widetilde \eta_{L,\varepsilon}(\tau-\tau_{\min}), &  \tau \geq \tau_{\min}.
	\end{cases}
	\]
	Then 
	\begin{align}
	\label{eq:TBF_phi_ETM2}
	\left|
	\int^\tau_0 T_{BF}(\tau-s) \phi\big(
	x(t_k+s)
	\big)ds	
	\right| 
	&\leq \Gamma L \int^\tau_0 e^{-\gamma(\tau-s)} \Upsilon_{L,\varepsilon,\tau_{\min}}(s)ds |x(t_k)| \notag \\
	&\leq \beta_{\rm e}(L,\varepsilon,\tau_{\min},\tau_{\max}) (1-e^{-\gamma \tau}) |x(t_k)|,
	\end{align}
	for  every $\tau \in [\tau_{\min},t_{k+1}-t_k]$ and every $k \in \mathbb{N}_0$,
	where 
	\begin{equation}
	\label{eq:alpha_def}
	\beta_{\rm e}(L,\varepsilon,\tau_{\min},\tau_{\max}) := \sup_{\tau_{\min} \leq \tau \leq \tau_{\max}}
	\frac{\Gamma L }{1-e^{-\gamma \tau}}\int^\tau_0 e^{-\gamma(\tau-s)} \Upsilon_{L,\varepsilon,\tau_{\min}}(s)ds.
	\end{equation}
	
	2. Combining \eqref{eq:x_TBF_rep} with
	the estimates \eqref{eq:tau_min_above}, \eqref{eq:tau_min_below}, and 
	\eqref{eq:TBF_phi_ETM2}, we obtain
	\begin{align}
	\label{eq:x_bound_ETM2}
	|x(t_k+\tau)| &\leq
	\nu(\tau) |x(t_k)|\qquad \forall \tau \in [\tau_{\min},t_{k+1}-t_k],~\forall k\in \mathbb{N}_0,
	\end{align}
	where
	\begin{equation}
	\label{eq:nu_def}
	\nu(\tau):=
	e^{-\gamma \tau} +\tau_{\min} g(L,\tau_{\min}) + 
	\beta_{\rm e}(L,\varepsilon,\tau_{\min},\tau_{\max})
	(
	1-e^{-\gamma \tau}
	) + 
	b_{\rm e}\varepsilon 
	\big( 1 - e^{-\gamma (\tau-\tau_{\min})}
	\big).
	\end{equation}
	We will prove $\sup_{\tau \geq \tau_{\min}}\nu(\tau) < 1$.
	To this end, define
	\begin{align*}
	\kappa_1(\tau) &:= e^{-\gamma \tau} + \tau g(L,\tau) \\
	\kappa_2(\tau) &:= e^{-\gamma \tau} - e^{-\gamma \tau_{\min}}+ 
	\beta_{\rm e}(L,\varepsilon,\tau_{\min},\tau_{\max})
	(
	1-e^{-\gamma \tau}
	) +
	b_{\rm e}\varepsilon 
	\big( 1 - e^{-\gamma (\tau-\tau_{\min})}
	\big).
	\end{align*}
	Then $\nu(\tau) = \kappa_1(\tau_{\min}) + \kappa_2(\tau)$.
	
	First we investigate $\kappa_1(\tau_{\min})$.
	Since $e^{-\gamma \tau} < 1 - \gamma \tau e^{-\gamma \tau}$
	for every $\tau >0$,
	it follows that 
	\begin{align*}
	\kappa_1(\tau)
	<
	1-\gamma \tau + 
	[
	\gamma (1-e^{-\gamma \tau}) + g(L,\tau)
	]\tau\qquad \forall \tau>0.
	\end{align*}
	Lemma~\ref{lem:FD_conv} shows that  for every $L>0$,
	\begin{equation}
	\label{eq:g_0_conv}
	\lim_{\tau_{\min} \downarrow 0}g(L,\tau_{\min}) = 0.
	\end{equation}
		Choose $\varsigma \in (0,\gamma)$ arbitrarily, and let $L^*_0 >0$. There exists
	$\tau_{\min}^* \in (0,\tau_{\max})$ such that 
	\begin{equation}
	\label{eq:ETC_cond1}
	\gamma (1-e^{-\gamma \tau_{\min}^*}) + g(L^*_0,\tau_{\min}^*) \leq \varsigma.
	\end{equation}
	Let $\tau_{\min} \in (0,\tau_{\min}^*]$ be given. 
	For every $L \in [0,L^*_0]$,
	\begin{equation}
	\label{eq:bound1_ETM2}
	\kappa_1(\tau_{\min}) < 1 - (\gamma -\varsigma ) \tau_{\min}.
	\end{equation}
	
	To estimate $\kappa_2(\tau)$,
	we first obtain
	\[
	\kappa_2'(\tau) = \gamma \left(
	\beta_{\rm e}(L,\varepsilon,\tau_{\min},\tau_{\max}) 
	+ b_{\rm e} \varepsilon e^{\gamma \tau_{\min}} - 1
	\right)e^{-\gamma \tau} .
	\]
	Since
	\[
	\lim_{\tau \downarrow 0}
	\frac{1}{1-e^{-\gamma \tau}}\int^\tau_0 e^{-\gamma(\tau-s)} \Upsilon_{L,\varepsilon,\tau_{\min}}(s)ds = \frac{c_1}{\gamma},
	\]
	it follows from
	the definition \eqref{eq:alpha_def} of $\beta_{\rm e}$ that 
	there exist $L^* \in (0,L^*_0]$ and $\varepsilon^*>0$, 
	independent of $\tau_{\min}$, such that 
	\begin{align}
	\label{eq:ETC_cond2}
	\sup_{0 < \tau \leq \tau_{\min}^*}
	\left(
	\beta_{\rm e}(L^*,\varepsilon^*,\tau,\tau_{\max})  
	+ b_{\rm e} \varepsilon^* e^{\gamma \tau} 
	\right)
	< \frac{\gamma - \varsigma}{\gamma} 
	~(< 1).
	\end{align}
	Choose $L \in [0,L^*]$ and $\varepsilon \in [0,\varepsilon^*]$.
	Then $\kappa_2'(\tau) <0$ for every $\tau >0$, and hence
	\begin{equation}
	\label{eq:bound2_ETM2}
	\kappa_2(\tau) \leq \kappa_2(\tau_{\min}) = 
	\beta_{\rm e}(L,\varepsilon,\tau_{\min},\tau_{\max})
	(
	1-e^{-\gamma \tau_{\min}}
	)\qquad \forall \tau \geq \tau_{\min}.
	\end{equation}
	By \eqref{eq:bound1_ETM2} and \eqref{eq:bound2_ETM2},
	\[
	\nu(\tau) 
	\leq \kappa_1(\tau_{\min}) + \kappa_2(\tau_{\min})
	< 1 - (\gamma - \varsigma)\tau_{\min} + 
	\beta_{\rm e}(L,\varepsilon,\tau_{\min},\tau_{\max})
	(
	1-e^{-\gamma \tau_{\min}}
	)
	\]
	for every $\tau \geq \tau_{\min}$.
	If we 
	define a function $w$ on $(0,\infty)$ by
	\[
	w(\tau) := \frac{(\gamma - \varsigma)\tau}{1- e^{-\gamma \tau}},
	\]
	then $w$ is increasing on $(0,\infty)$ and $\lim_{\tau \downarrow 0}w(\tau) 
	= (\gamma - \varsigma)/\gamma$.
	Since \eqref{eq:ETC_cond2} yields
	\[
	\beta_{\rm e}(L,\varepsilon,\tau_{\min},\tau_{\max}) < 
		\frac{\gamma - \varsigma}{\gamma}  <
	\frac{ (\gamma - \varsigma)\tau_{\min} }{1-e^{-\gamma \tau_{\min}}},
	\]
	it follows that 
	$\sup_{\tau \geq \tau_{\min}}\nu(\tau) < 1$.
	Hence
	\[
	\gamma_0 := \inf_{\tau_{\min} \leq \tau \leq \tau_{\max}} 
	\frac{-\log \nu(\tau)}{\tau} >0,
	\] 
	and \eqref{eq:x_bound_ETM2} yields 
	\begin{equation}
	\label{eq:x_bound_tau_large}
	|x(t_k+\tau)| \leq e^{-\gamma_0\tau} |x(t_k)|\qquad \forall \tau \in [\tau_{\min},t_{k+1}-t_k],~\forall k\in \mathbb{N}_0.
	\end{equation}
	
	3.
	Substituting $\tau = t_{k+1}- t_k$ into \eqref{eq:x_bound_tau_large} gives
	\[
	|x(t_{k+1})| \leq e^{-\gamma_0 (t_{k+1} - t_k)} |x(t_k)|\qquad \forall k\in \mathbb{N}_0.
	\]
	Applying induction, we obtain
	\begin{align*}
	|x(t_k)| &\leq e^{-\gamma_0t_k} |x^0| \qquad 
	\forall x^0 \in X,~\forall k\in \mathbb{N}_0.
	\end{align*}
	By \eqref{eq:x_bound_tau_small} and \eqref{eq:x_bound_tau_large},
	there exists $M \geq 1$ such that
	\[
	\|x(t_k+\tau)\| \leq M\|x(t_k) \| \qquad \forall \tau \in [0,t_{k+1}-t_k],~\forall k\in \mathbb{N}_0.
	\]
	Therefore,
	\[
	\|x(t_k+\tau)\| \leq 
	M \|x(t_k)\| \leq M|x(t_k)| \leq 
	M e^{-\gamma_0 t_k} |x^0| \leq 
	\big(M\Gamma e^{\gamma_0 \tau_{\max}}  \big)  e^{-\gamma_0 (t_k+\tau)} \|x^0\| 
	\]
	for all $x^0 \in X$, $\tau \in [0,t_{k+1}-t_k]$,~and $k\in \mathbb{N}_0$.
	Thus, the event-triggered control system is exponentially stable.
\end{proof}

\begin{remark}[Conditions on $L^*$, $\varepsilon^*$, and $\tau_{\min}^*$]
	\label{rem:ETC_cond_summary}
	We see from \eqref{eq:ETC_cond1} and \eqref{eq:ETC_cond2} that,
	for a given $\tau_{\max} >0$,
	the bounds 
	$L^*, \varepsilon^* >0$ and $\tau_{\min}^* \in (0,\tau_{\max})$
	in Theorem~\ref{thm:ETC_LB_stability}
	have to satisfy the following two inequalities:
		\begin{align*}
		&\gamma 
		e^{-\gamma \tau_{\min}^*} - \Gamma 
		\sup_{0\leq \tau \leq \tau^*_{\min}} \Big(
		\|BF(I-\Delta_\tau)\|_{\mathcal{B}(X)} + c_1(\tau_{\min}^*)\|BF\|_{\mathcal{B}(X)}  (e^{c_2(\tau_{\min}^*)L^*\tau} - 1)\Big) \\
		&=: \varsigma_1 >0    \\
		&
		\sup_{0< \tau \leq \tau_{\min}^*}
		\left(
		\beta_{\rm e}(L^*,\varepsilon^*,\tau,\tau_{\max})  
		+
		\frac{\varepsilon^* e^{\gamma \tau} 
			\Gamma \|BF\|_{\mathcal{B}(X)}}{\gamma} 
		\right)
		< \frac{\varsigma_1}{\gamma},
		\end{align*}
		where we define
		$c_1(\tau_{\min})$ and  $c_2(\tau_{\min})$ by \eqref{eq:c1_c2_def}
		 and 
		$\beta_{\rm e}(L,\varepsilon,\tau_{\min},\tau_{\max})$ by
		\eqref{eq:alpha_def}.
\end{remark}

The conditions given in Remark~\ref{rem:ETC_cond_summary} 
look complicated.
However, in the unperturbed case  $\phi \equiv 0$, 
we obtain a simple sufficient condition for exponential stability, which
can be used for the design of the event-triggering mechanism \eqref{eq:ETC_LB}.
Here we assume that $BF \not=0$; otherwise
$T(t)$ is exponentially stable under Assumption~\ref{assump:bounded_case},
and hence the stabilization problem we consider would be trivial.
\begin{corollary}
	\label{coro:ETC_LB_stability}
	Let Assumption~\ref{assump:bounded_case}, $BF \not=0$, and
	$\phi \equiv 0$ be satisfied.
	If $\varepsilon\geq0$ and $\tau_{\min} \in (0,\tau_{\max})$ 
	satisfy
	\begin{equation}
	\label{eq:linear_case_ETC}
	\varepsilon < \frac{
	\gamma e^{-\gamma \tau_{\min}} 
		-
		\Gamma\sup_{0\leq \tau \leq \tau_{\min}} 
		\|BF(I-\Delta_\tau)\|_{\mathcal{B}(X)}}{e^{\gamma \tau_{\min}}\Gamma \|BF\|_{\mathcal{B}(X)} } ,
	\end{equation}
	the system \eqref{eq:plant}
	with the event-triggering mechanism \eqref{eq:ETC_LB}
	is exponentially stable for 
	every $\tau_{\max} >0$.
\end{corollary}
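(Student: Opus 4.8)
The plan is to specialize the proof of Theorem~\ref{thm:ETC_LB_stability} to $\phi\equiv0$, where taking $L=0$ collapses almost all of the machinery. First I would observe that with $\phi\equiv0$ the first integral term in \eqref{eq:x_TBF_rep} vanishes, so that $\beta_{\rm e}(0,\varepsilon,\tau_{\min},\tau_{\max})=0$ by \eqref{eq:alpha_def}, and that $g$ reduces to $g(0,\tau_{\min})=\Gamma\sup_{0\leq\tau\leq\tau_{\min}}\|BF(I-\Delta_\tau)\|_{\mathcal{B}(X)}$ because the factor $e^{c_2L\tau}-1$ in its definition is identically zero. Hence the estimate \eqref{eq:x_bound_ETM2} remains valid with the function $\nu$ of \eqref{eq:nu_def} simplified to
\[
\nu(\tau) = e^{-\gamma\tau} + \tau_{\min}\,g(0,\tau_{\min}) + b_{\rm e}\varepsilon\big(1-e^{-\gamma(\tau-\tau_{\min})}\big),\qquad b_{\rm e} := \frac{\Gamma\|BF\|_{\mathcal{B}(X)}}{\gamma}.
\]
The entire task then reduces to proving $\sup_{\tau\geq\tau_{\min}}\nu(\tau)<1$; once this holds I would set $\gamma_0:=\inf_{\tau_{\min}\leq\tau\leq\tau_{\max}}(-\log\nu(\tau))/\tau>0$ and reuse verbatim the induction and inter-sample bound of Step~3 in the proof of Theorem~\ref{thm:ETC_LB_stability} to conclude exponential stability.

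To control $\nu$ I would compute $\nu'(\tau)=\gamma e^{-\gamma\tau}\big(b_{\rm e}\varepsilon e^{\gamma\tau_{\min}}-1\big)$ and read off monotonicity from the sign of $b_{\rm e}\varepsilon e^{\gamma\tau_{\min}}-1$. Rearranging the hypothesis \eqref{eq:linear_case_ETC} (multiplying by the positive denominator and dividing by $\gamma$) yields exactly
\[
b_{\rm e}\varepsilon e^{\gamma\tau_{\min}} < e^{-\gamma\tau_{\min}} - \frac{g(0,\tau_{\min})}{\gamma},
\]
and since $\varepsilon\geq0$ and $g(0,\tau_{\min})\geq0$ this forces simultaneously $b_{\rm e}\varepsilon e^{\gamma\tau_{\min}}<e^{-\gamma\tau_{\min}}\leq1$ (so $\nu$ is strictly decreasing, whence $\sup_{\tau\geq\tau_{\min}}\nu=\nu(\tau_{\min})=e^{-\gamma\tau_{\min}}+\tau_{\min}g(0,\tau_{\min})$) and the separate bound $g(0,\tau_{\min})<\gamma e^{-\gamma\tau_{\min}}$.

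The one genuinely non-formal step is then verifying $\nu(\tau_{\min})<1$, i.e. $\tau_{\min}g(0,\tau_{\min})<1-e^{-\gamma\tau_{\min}}$. The hypothesis only hands me $g(0,\tau_{\min})/\gamma<e^{-\gamma\tau_{\min}}$, which by itself bounds $\tau_{\min}g(0,\tau_{\min})$ by $\gamma\tau_{\min}e^{-\gamma\tau_{\min}}$; I would close the remaining gap with the elementary convexity inequality $e^{\gamma\tau_{\min}}>1+\gamma\tau_{\min}$, which is equivalent to $\gamma\tau_{\min}e^{-\gamma\tau_{\min}}<1-e^{-\gamma\tau_{\min}}$, giving the chain
\[
\tau_{\min}g(0,\tau_{\min}) < \gamma\tau_{\min}e^{-\gamma\tau_{\min}} < 1 - e^{-\gamma\tau_{\min}}.
\]
I expect this matching of the explicit threshold against the exponential inequality to be the main obstacle, since it is the only place where the precise form of the denominator $e^{\gamma\tau_{\min}}\Gamma\|BF\|_{\mathcal{B}(X)}$ in \eqref{eq:linear_case_ETC} and the sharpness of $e^x>1+x$ are jointly needed. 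Finally, because $\beta_{\rm e}=0$ removes the only $\tau_{\max}$-dependence present in \eqref{eq:nu_def}, the bound $\nu(\tau_{\min})<1$ holds uniformly in $\tau_{\max}$, delivering the claimed exponential stability for every $\tau_{\max}>0$.
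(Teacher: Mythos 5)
Your proposal is correct and is essentially the paper's argument unwound: the paper proves the corollary by substituting $L^*=0$ into the conditions of Remark~\ref{rem:ETC_cond_summary}, which are themselves distilled from exactly the steps you reproduce (the monotonicity of the $\varepsilon$-term via the sign of $b_{\rm e}\varepsilon e^{\gamma\tau_{\min}}-1$, and the inequality $e^{\gamma\tau}>1+\gamma\tau$, which appears in the proof of Theorem~\ref{thm:ETC_LB_stability} in the equivalent form $e^{-\gamma\tau}<1-\gamma\tau e^{-\gamma\tau}$). Your explicit verification that $\sup_{\tau\geq\tau_{\min}}\nu(\tau)=\nu(\tau_{\min})<1$ uniformly in $\tau_{\max}$ matches the paper's reasoning step for step.
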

\begin{proof}
	Substituting $L^*=0$ into the conditions 
	in Remark~\ref{rem:ETC_cond_summary},
	we obtain the condition \eqref{eq:linear_case_ETC}.
\end{proof}

\begin{remark}[Dependence on $\tau_{\max}$]
In \eqref{eq:linear_case_ETC}, $\tau_{\max}$ does not appear. However,
the decay rate  of the closed-loop system may become small
as $\tau_{\max}$ increases, as in the self-triggered case.
\end{remark}
\begin{remark}[Robustness to linear perturbations]
	\label{rem:robustness_of_ETC}
	Note that the event-triggering mechanism \eqref{eq:ETC_LB} may allow
	larger linear perturbations than the self-triggered mechanism
	\eqref{eq:STC_Feedback}.
	The reason is that 
	the event-triggering mechanism \eqref{eq:ETC_LB} 
	does not use the model of the plant. To see this,
	suppose that the plant $(A,B)$ is changed to 
	$(\widetilde A, \widetilde B)$, where 
	$\widetilde A$ is the generator of  a strongly continuous semigroup
	on $X$ and $\widetilde B \in \mathcal{B}(U,X)$.
	The perturbed event-triggering control system 
	is exponentially stable as long as 
	$\varepsilon>0$ and $\tau_{\min} \in (0,\tau_{\max})$
	satisfies
	the counterpart of \eqref{eq:linear_case_ETC} in the
	perturbed case $(\widetilde A, \widetilde B)$.
	We observe
	this robustness of 
	the event-triggering mechanism \eqref{eq:ETC_LB} 
	against linear perturbations
	from numerical simulations in the next section
\end{remark}

\begin{remark}[Periodic case]
	\label{rem:periodic_case}
	Consider the unperturbed periodic sampled-data system, that is,
	the case $\phi \equiv 0$ and $t_{k+1}-t_k \equiv h$.
	In the proof of Theorem~3.1 of \cite{Logemann2003},
	the following 
	sufficient condition for the periodic sample-data system to
	be exponentially stable  is provided under 
	Assumption~\ref{assump:bounded_case}:
	\begin{equation}
	\label{eq:suff_cond1_PSD}
	\Gamma e^{\gamma h}\sup_{0 \leq t\leq h} \big\|
	T(h-t) BF [I-T_{BF}(t)]\big\|_{\mathcal{B}(X)} < \gamma .
	\end{equation}
	From \eqref{eq:linear_case_ETC} with $\varepsilon = 0$,
	we also obtain a sufficient condition 
	\begin{equation}
	\label{eq:suff_cond2_PSD}
	\Gamma e^{\gamma h}\sup_{0 \leq t\leq h} \|
	BF (I-\Delta_t)\|_{\mathcal{B}(X)} < \gamma
	\end{equation}
	for the periodic sample-data system with $t_{k+1}-t_k \equiv h$ 
	to
	be exponentially stable. 
	These sufficient conditions \eqref{eq:suff_cond1_PSD}
	and \eqref{eq:suff_cond2_PSD}
	are essentially same, because
	the technique used to prove
	Theorem~3.1 of \cite{Logemann2003}
	is applied 
	for the inequality \eqref{eq:bound1_ETM2} in the proof of
	Theorem~\ref{thm:ETC_LB_stability}.
\end{remark}
\section{Numerical example in bounded control case}
\label{sec:numerical_example}
In this section, we provide numerical simulations of 
the event/self-triggering mechanisms studied in Section~\ref{sec:bounded_control}.
Before presenting with simulation results,
we explain the applicability of the proposed methods.
Recall 
that the semigroup $T_{BF}(t)$ generated by $A+BF$ is exponentially stable
under Assumption~2.4.
Exponential stabilization by a compact feedback operator $F$
is achieved in the bounded control case only if
$A$ has only finitely many unstable eigenvalues; see
Theorem~IV.8.24 on p.~469 of \cite{Engel2000}.
Therefore, the proposed methods can be applied to
systems with finitely many unstable poles such as
heat equations and retarded delay differential equations, but
not to 
systems with infinitely many unstable poles such as undamped wave equations.

\subsection{Heat equation  in cascaded with ODE}
We consider a heat equation in cascade with an ODE:
\begin{subequations}
	\label{eq:ODE_PDE}
	\noeqref{eq:ODE_PDE1,eq:ODE_PDE2,eq:ODE_PDE3}
	\begin{align}
	&\frac{\partial z_1}{\partial t}(\xi,t) = 
	\frac{\partial^2 z_1}{\partial \xi^2}(\xi,t) + b(\xi)
	\psi\big(z_2(t)\big),\quad \xi \in [0,1],~t \geq 0 \label{eq:ODE_PDE1}\\
	& \frac{\partial z_1}{\partial \xi}(0,t) = 0,\quad \frac{\partial z_1}{\partial \xi}(1,t) = 0,\quad t\geq 0;\quad
	z_1(\xi,0) = z_1^0(\xi),\quad \xi \in [0,1] \label{eq:ODE_PDE2}\\
	& \dot z_2(t) = Gz_2(t) + Hu(t),\quad t\geq 0;\qquad
	z_2(0) = z_2^0,\label{eq:ODE_PDE3}
	\end{align}
\end{subequations}
where $b = 
\begin{bmatrix}
b_1 & \cdots & b_p
\end{bmatrix}\in L^2([0,1],\mathbb{R}^{1 \times p})$,
$G \in \mathbb{R}^{p \times p}$, and $H \in \mathbb{R}^{p \times m}$.
In \eqref{eq:ODE_PDE},
$z_1(\xi,t) \in \mathbb{R}$ is the temperature at position $\xi \in [0,1]$ and time $t\geq0$,
$z_2(t)\in \mathbb{R}^p$ is the state of the ODE, 
$u(t)\in \mathbb{R}^m$ is the input, and $\psi: \mathbb{R}^p \to \mathbb{R}^p$
represents the actuator nonlinearity of the heat equation.

First
we reformulate the cascaded system \eqref{eq:ODE_PDE} as 
an abstract evolution equation in the form of \eqref{eq:state_equation}.
We write $L^2(0,1)$ in place of $L^2([0,1],\mathbb{C})$.
The state space $X$ and the input space $U$ are defined by
$X :=   L^2(0,1) \times \mathbb{C}^p$ and $U := \mathbb{C}^m$,
respectively.
The state space $X$ is a Hilbert space endowed with the inner product
\[
\left\langle
\begin{bmatrix}
x_1 \\ x_2
\end{bmatrix},~
\begin{bmatrix}
y_1 \\ y_2
\end{bmatrix}
\right\rangle :=
\langle x_1,
y_1
\rangle_{L^2} + 
\left\langle
x_2,
y_2
\right\rangle_{\mathbb{C}^p}.
\]
Set
\[
x(t) :=  
\begin{bmatrix}
x_1(t) \\ x_2(t)
\end{bmatrix}
\text{~~with~~$x_1(t) := z_1(\cdot, t)$ and $x_2(t) := z_2(t)$};
\qquad x^0 :=
\begin{bmatrix}
z_1^0 \\ z_2^0
\end{bmatrix} \in X.
\]
Let $f_0(\xi) := 1$ and $f_n (\xi) := \sqrt{2} \cos(n \pi\xi )$ for 
$n \in \mathbb{N}$. Then $\{f_n\}_{n \in \mathbb{N}_0}$
forms an orthonormal basis for $L^2(0,1)$.
Define 
$A_1 : \dom(A_1) \subset L^2(0,1) \to L^2(0,1)$ by
\begin{equation}
\label{eq:A1_expression}
A_1x_1 :=-\sum_{n = 0}^\infty 
n^2\pi^2
\langle
x_1,f_n
\rangle_{L^2} f_{n} 
\end{equation}
with domain
\[
\dom(A_1) := \left\{
x_1 \in L^2(0,1):
\sum_{n = 0}^\infty  n^4\pi^4 
|\langle
x_1,f_n
\rangle_{L^2}|^2 < \infty
\right\}
\]
and $B_1 : \mathbb{C}^p \to L^2(0,1)$  by
\[
B_1x_2 :=  bx_2,\quad x_2 \in \mathbb{C}^p.
\]
As shown  in Example~2.3.7 on p.~45 in \cite{Curtain1995},
$A_1$ given in \eqref{eq:A1_expression}  is the operator
that governs the state evolution of the uncontrolled heat equation
with Neumann boundary conditions.
If we define the operators $A:\dom(A) \subset X \to X$, 
$B:U \to X$, and $\phi:X \to X$ by
\begin{align*}
A &:= 
\begin{bmatrix}
A_1 & B_1 \\
0 & G
\end{bmatrix}\quad \text{with~~$\dom(A) := \dom(A_1) \times \mathbb{C}^p$}\\
B &:= 
\begin{bmatrix}
0 \\
H
\end{bmatrix},\quad
\phi
\left(
\begin{bmatrix}
x_1 \\ x_2
\end{bmatrix}
\right) := 
\begin{bmatrix}
B_1\psi(x_2) - B_1x_2 \\ 0
\end{bmatrix},
\end{align*}
then the cascaded system \eqref{eq:ODE_PDE} is reformulated as
an abstract evolution equation \eqref{eq:state_equation}.

Let the feedback operator $F: X \to U$ be in the form of
\begin{equation}
\label{eq:feedback_operator_example}
F \begin{bmatrix}
x_1 \\ x_2
\end{bmatrix} := F_1 \langle x_1, f_0 \rangle + F_2 x_2,
\end{equation}
where $F_1 \in \mathbb{C}^m$ and $F_2 \in \mathbb{C}^{m \times p}$.
By construction,
the controller uses 
the average temperature $\langle x_1(t), f_0\rangle$
for the computation of the control input $u(t)$.
Similarly, the 
self-triggering mechanism \eqref{eq:STC_Feedback}
computes
the transmission time $t_{k+1}$
from  the average temperature $\langle x_1(t_k), f_0\rangle$
and the $L^2$ norm $\|x_1(t_k)\|_{L^2}$.
In fact,
a simple calculation shows that
\begin{subequations}
	\label{eq:FT_FST_matrix_rep}
	\noeqref{eq:FT_FST_matrix_rep1, eq:FT_FST_matrix_rep2}
	\begin{align}
	FT(t) 
	\begin{bmatrix}
	x_1 \\ x_2
	\end{bmatrix}
	&=
	\begin{bmatrix}
	F_1 & Q(t)
	\end{bmatrix}
	\begin{bmatrix}
	\langle x_1, f_0 \rangle \\ x_2
	\end{bmatrix} \label{eq:FT_FST_matrix_rep1}\\
	FS_tF
	\begin{bmatrix}
	x_1 \\ x_2
	\end{bmatrix}
	&=
	\begin{bmatrix}
	\int^t_0 Q(s)H F_1 ds & \int^t_0 Q(s)H F_2 ds
	\end{bmatrix}
	\begin{bmatrix}
	\langle x_1, f_0 \rangle \\ x_2
	\end{bmatrix}\label{eq:FT_FST_matrix_rep2}
	\end{align}
\end{subequations}
for every $t \geq 0$,
where $Q(t) \in \mathbb{C}^{m \times p}$ is defined by
\[
Q(t) :=
F_1 
\begin{bmatrix}
\langle b_1, f_0 \rangle_{L^2} & \cdots &  \langle b_p, f_0 \rangle_{L^2} 
\end{bmatrix}
 \int^t_0 e^{G s}  ds + F_2 e^{G t},\quad  t \geq 0.
\]
Moreover, this implies that the time sequence 
$\{t_k\}_{k \in \mathbb{N}_0}$ 
of the self-triggering mechanism \eqref{eq:STC_Feedback}
can be calculated by matrix operations.

When the prespecified lower bound $\tau_{\min}$ of inter-event times is 
$0$ in
the event-triggering mechanism \eqref{eq:ETC_LB},
an inter-event time can be made arbitrarily close to $0$ in this example.
To see this, let $\varepsilon >0$ and define
\[
t_1(x^0) :=  \inf \big\{ t> 0:
\|x^0 - x(t)\| > \varepsilon \|x^0\|\},\quad x^0 \in X.
\]
Then 
\[
\lim_{n \to \infty} t_1 \left( 
\begin{bmatrix}
f_n \\ 0
\end{bmatrix}
\right) = 0.
\]

\subsection{Numerical simulation: Self-triggered control}
\label{sec:simualtion_STC}
Let $p=m=1$ and
\begin{equation}
	\label{eq:constant_parameter}
b = b_1 = 5 \mathds{1}_{[0.4,0.6]},~~G = 1,~~H=1,~~F_1 = -4,~~F_2 = -5,
\end{equation}
where $\mathds{1}_{[0.4,0.6]}$ is 
the indicator function of the interval $[0.4,0.6]$.
By Proposition~6.1 and its proof of \cite{Wakaiki2018_EVC},
for every $\gamma \in (0,2)$,
there exists $\Gamma \geq 1$ such that 
\begin{equation}
\label{eq:M_bound}
\|T_{BF}(t)\|_{\mathcal{B}(X)}  \leq \Gamma e^{-\gamma t} \qquad \forall t\geq 0.
\end{equation}
We see that $\Gamma = 1.92$ and $\gamma = 1$
satisfies \eqref{eq:M_bound} from
numerical computation based on
the eigenfunction decomposition by $\{f_n\}_{n \in \mathbb{N}_0}$
as in Section~6 of \cite{Wakaiki2018_EVC}.
The initial states $z_1(\xi,0)$ and $z_2(0)$ are  given by 
$z_1(\xi,0) \equiv 2$ and $z_2(0) = -2$.

In the simulation,
the actuator nonlinearity $\psi:\mathbb{R} \to \mathbb{R}$ 
of the heat equation 
is given by
\[
\psi(x) := 
\begin{cases}
(1+r_1) x, & \text{$0 \leq x < \vartheta$}, \\
(1-r_2) x + (r_1+r_2) \vartheta, & \text{$x \geq \vartheta$},\\
-\psi(-x), & \text{$x < 0$}
\end{cases}
\]
for $\vartheta >0$ and $0 < r_1,r_2 < 1$.
This nonlinearity $\psi$
represents
the energy efficiency of the actuator of the heat equation.
The actuator efficiency takes the higher value $1+r_1$ 
than the nominal value $1$ for
small inputs but the lower value $1-r_2$ for large inputs.
The constant $\vartheta$ is the threshold of 
the actuator efficiency.

Define $\psi_0(x) := \psi(x) - x$ for $x \in \mathbb{R}$ and
$r := \max\{r_1,r_2 \} $. Then
$|\psi_0(x) - \psi_0(y)| \leq r
|x - y|$ for every $x,y
\in \mathbb{R}$.
Hence the Lipschitz constant $L$ 
of $\phi$  is given by
$L = r\|B_1\|_{\mathcal{B}(\mathbb{C},L^2(0,1))} = \sqrt{5}r$, which 
does not depend on the threshold  $\vartheta$ because we consider a global 
Lipschitz condition. Therefore, 
we do not need to know
the exact value of the threshold $\vartheta $ for the design of the event/self-triggering mechanisms.
We set $\vartheta = 0.5$  in the simulations below.

Figures~\ref{fig:state_norm} and \ref{fig:input} show
the state norm $\|x(t)\|$ and 
the input $u(t)$ of the self-triggered control system with the 
mechanism \eqref{eq:STC_Feedback}, respectively.
We set $\tau_{\max} = 0.5$ and
consider the large perturbation case $(r_1,r_2,\varepsilon) = 
(0.1,0.1,0.29)$ and the small perturbation case 
$(r_1,r_2,\varepsilon) = 
(0.05,0.05,0.40)$, both of which satisfy the sufficient condition
\eqref{eq:L_thr_cond_STC} for exponential stability.
The blue and red lines in Figures~\ref{fig:state_norm} and \ref{fig:input} depict
the time responses in the large perturbation case and 
the small perturbation case, respectively.
We see from Figure~\ref{fig:state_norm} that 
the convergence speed of the state norm in
 the large perturbation case is slower, which
 is mainly due to the low efficiency of
the actuator of the heat equation.
Moreover, by looking at the update behavior on $[0.3,0.5]$ in 
Figure~\ref{fig:input}, we find that
the self-triggering mechanism in the large perturbation case 
is 
conservative, i.e., the control input is
updated even when the difference $Fx(t_{k+1}) - Fx(t_{k})$
is small. It is worthwhile to mention that if $\tau_{\max} > 0.91$, then 
$(r_1,r_2,\varepsilon) = 
(0.1,0.1,0.29)$ does not satisfy the sufficient condition
\eqref{eq:L_thr_cond_STC}.

\begin{figure}[bt]
	\centering
	\includegraphics[width = 8.8cm]{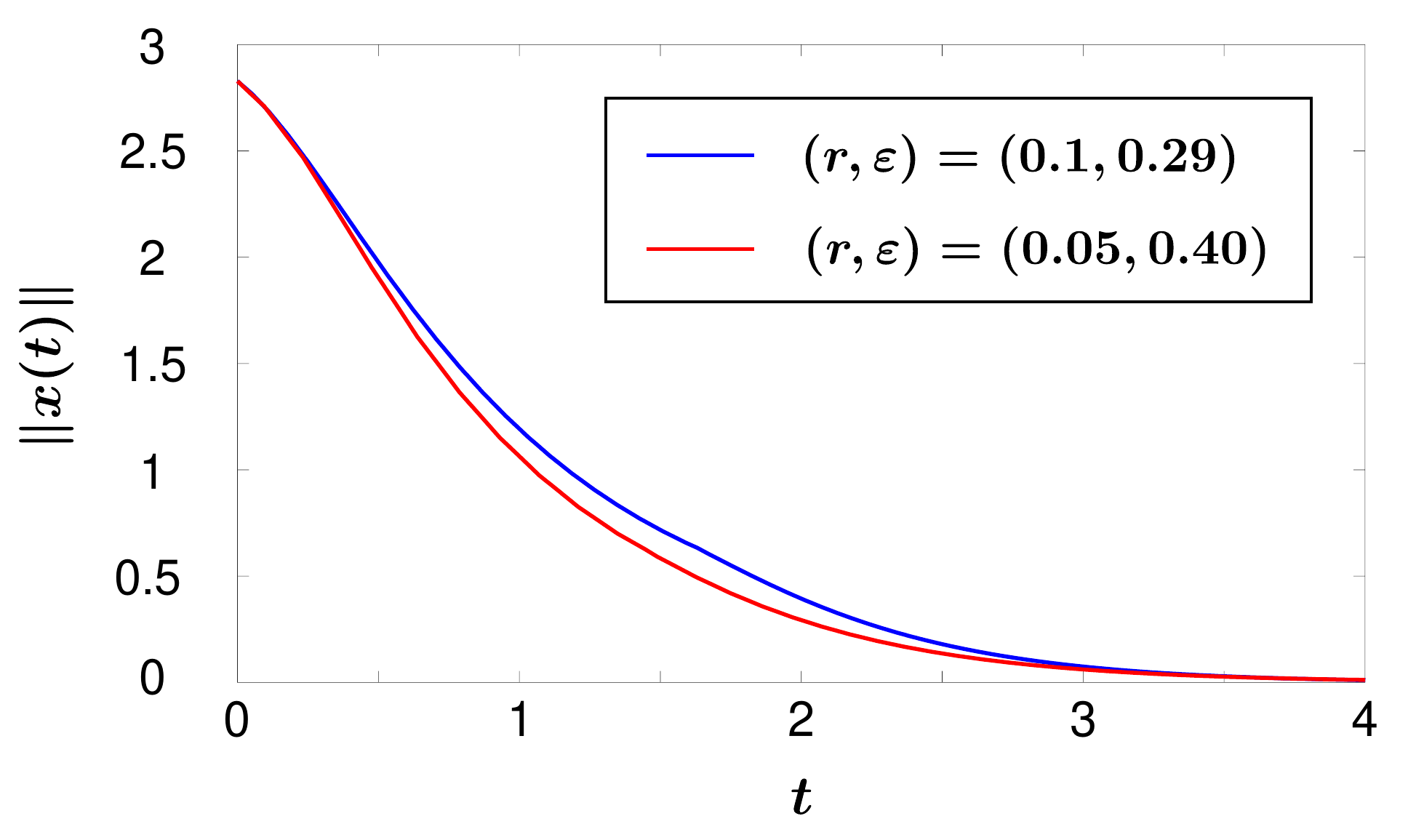}
	\caption{State norm $\|x(t)\|$ of self-triggered control system.}
	\label{fig:state_norm}
\end{figure}
\begin{figure}[bt]
	\centering
	\includegraphics[width = 8.8cm]{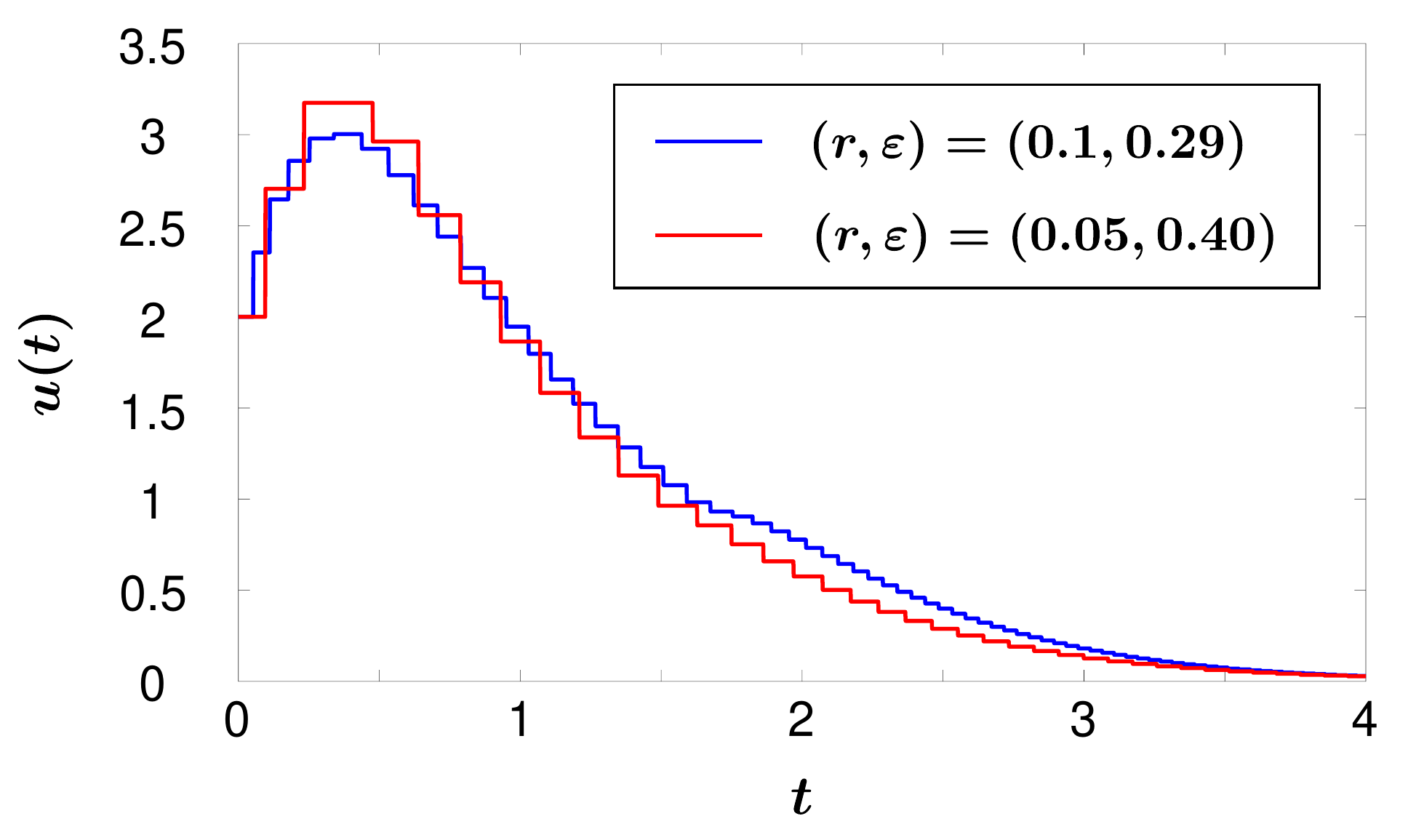}
	\caption{Input $u(t)$ of self-triggered control system.}
	\label{fig:input}
\end{figure}

Figure~\ref{fig:updating_interval} illustrates
inter-event times $t_{k+1} - t_k$ in the large perturbation case
 (blue circle) and  the small perturbation case 
(red squire).
The following lower bounds $\theta$ of the minimum inter-event time
$\inf_{k \in \mathbb{N}_0} (t_{k+1} - t_k)$ can be computed:
\[
\inf_{k \in \mathbb{N}_0} (t_{k+1} - t_k) \geq 
\theta:=
\inf_{\tau \geq 0}
\left\{
\|F(I-\Delta_{\tau})\| + 
L
\int_0^\tau \|FT(\tau - s)\| \eta_{L,\varepsilon}(s) ds \geq 
\varepsilon
\right\}.
\]
Indeed,
using  
the matrix representations
\eqref{eq:FT_FST_matrix_rep},
we obtain
$\theta = 0.0102$
in the large perturbation case  and
$\theta = 0.0147$ in 
the small perturbation case.
\begin{figure}[tb]
	\centering
	\includegraphics[width = 8.8cm]{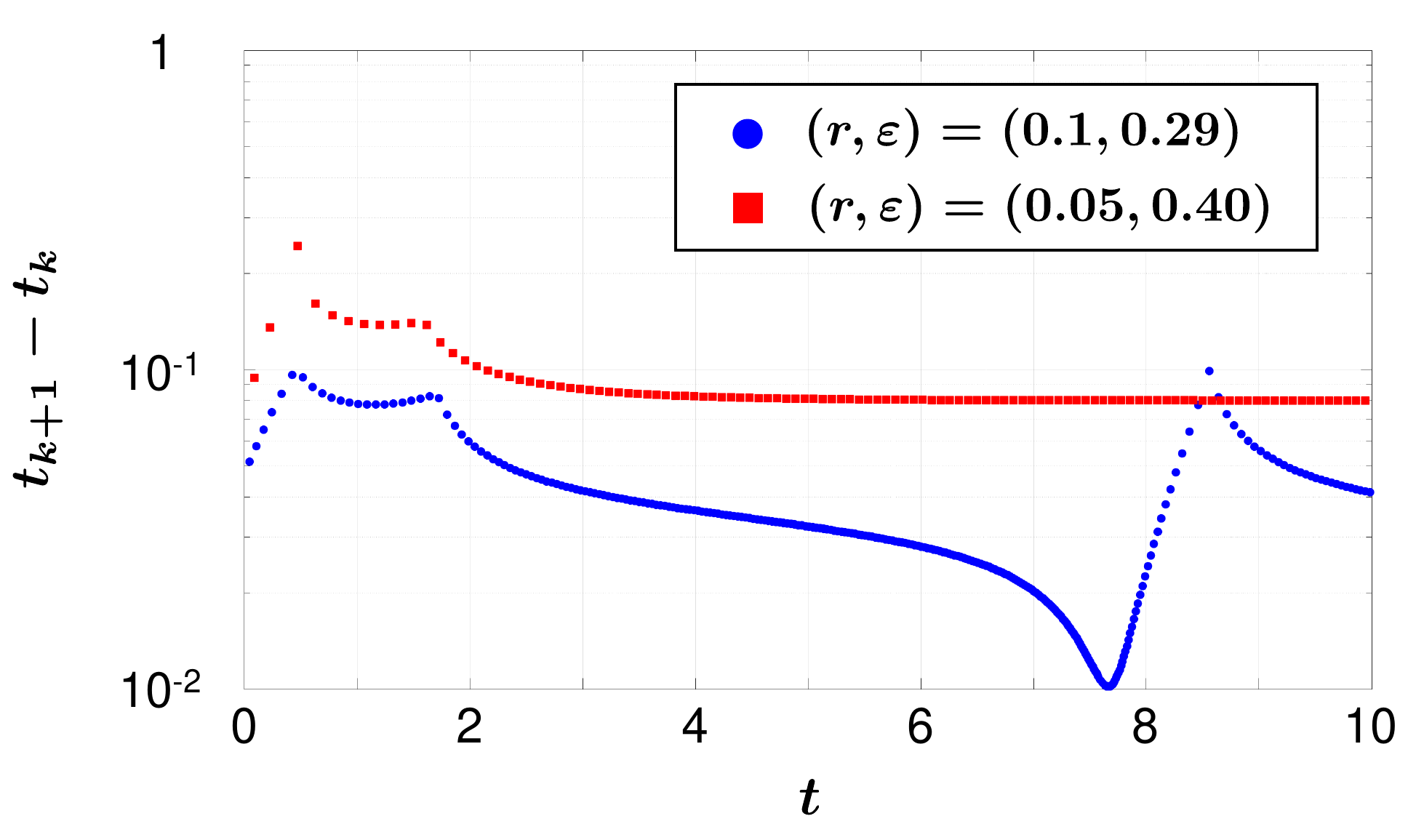}
	\caption{Inter-event times $t_{k+1} - t_k$ of self-triggered control system.}
	\label{fig:updating_interval}
\end{figure}
As expected from Figure~\ref{fig:input},
the state is frequently transmitted in the large perturbation case.  Moreover,
the inter-event times
in the large perturbation case
take values close to the lower bound $\theta = 0.0102$ for $7.6 \leq t \leq 7.8$.
We see from Figure~\ref{fig:updating_interval} that the behaviors of inter-event times
in the two cases are different.
More specifically, 
inter-event times in the large perturbation case
have a  periodic nature, and those in the small perturbation case
converge.
The understanding of the behaviors of inter-event times
remains limited even for finite-dimensional linear systems;
see, e.g., \cite{Postoyan2019}.
Further investigation would be needed to establish the analysis 
of inter-event times.

\subsection{Numerical simulation: Event-triggered control}
\label{sec:Num_Sim_ETC}
\begin{figure}[tb]
	\centering
	\includegraphics[width = 8.8cm]{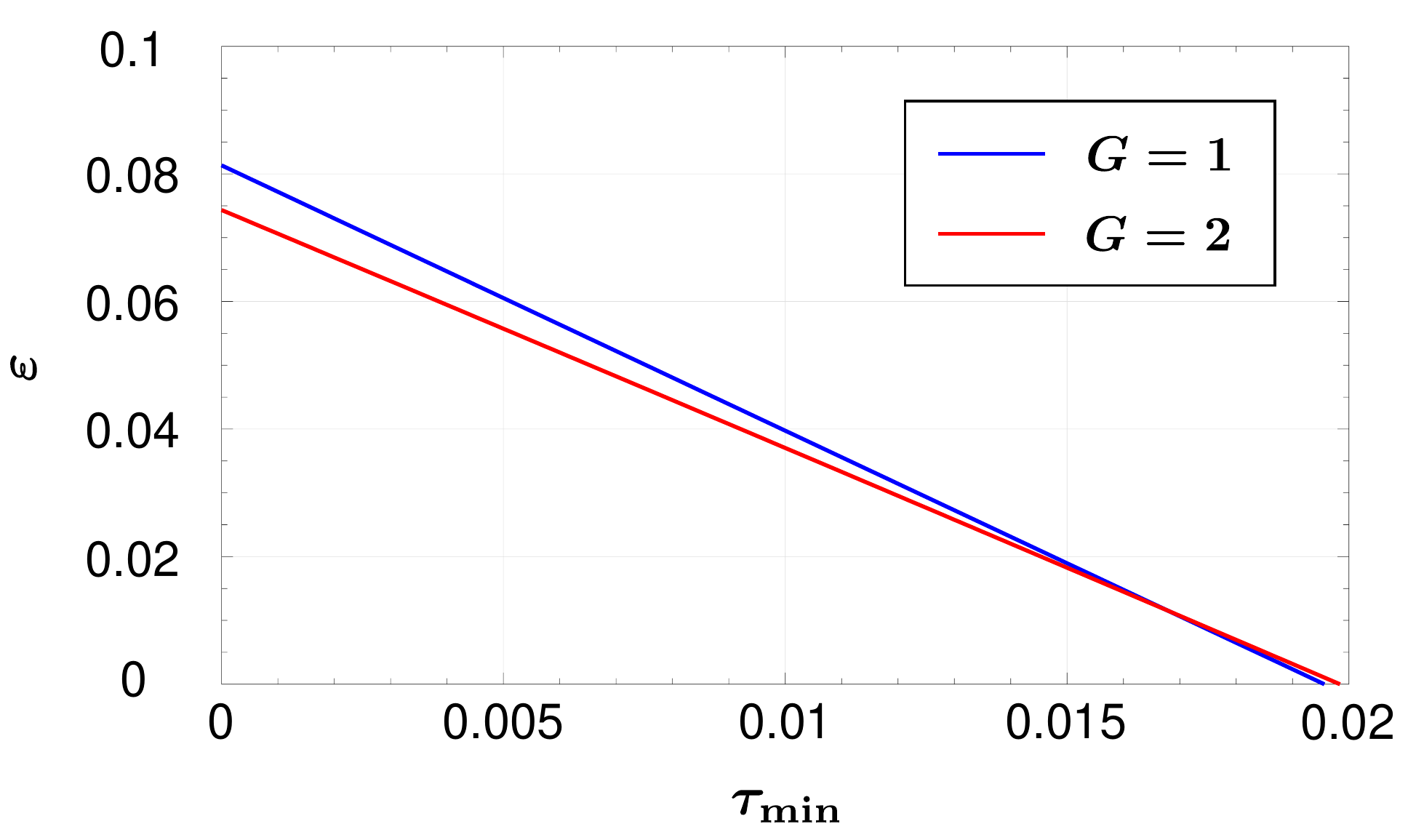}
	\caption{Bound on threshold $\varepsilon$ 
		and lower bound $\tau_{\min}$ of inter-event times of 
		event-triggering mechanism~\eqref{eq:ETC_LB}.}
	\label{fig:ETC_cond}
\end{figure}
Next, we provide numerical simulations of the event-triggering mechanism \eqref{eq:ETC_LB}.
To see the robustness against linear perturbations
explained in Remark~\ref{rem:robustness_of_ETC},
we here do not consider the nonlinear perturbation, that is,
we assume that $\psi(x_2) = x_2$ for every $x_2 \in \mathbb{R}$.
Figure~\ref{fig:ETC_cond} shows 
bounds on the threshold $\varepsilon$ and
the lower bound $\tau_{\min}$ of inter-event times obtained from
the sufficient condition
\eqref{eq:linear_case_ETC}.
The blue and red lines depict the bounds in
the cases $G= 1$ and $G=2$, respectively, where
the other parameters of the plant and the controller are set
as in \eqref{eq:constant_parameter}.
In the case $G=2$,
we set $\Gamma = 2.101$ and 
$\gamma = 1$ for
\eqref{eq:M_bound}.
We see from Figure~\ref{fig:ETC_cond} that 
the difference between the cases $G=1$ and $G=2$ is small  and that for example,  
$(\varepsilon,\tau_{\min}) = (0.07,0.001)$ satisfies \eqref{eq:linear_case_ETC}
in both cases.
Therefore, 
the event-triggering mechanism \eqref{eq:ETC_LB} with
$(\varepsilon,\tau_{\min}) = (0.07,0.001)$
achieves exponential stability for both $G=1$ and $G=2$. 
In contrast, the sufficient condition
\eqref{eq:L_thr_cond_STC} does not guarantee that 
the self-triggering mechanism \eqref{eq:STC_Feedback}
constructed for $G=1$
achieves exponential stability for any threshold 
$\varepsilon>0$ in the case $G=2$, since
\eqref{eq:L_thr_cond_STC} is violated for $L=1$.
Note that there is no point in comparing the thresholds $\varepsilon$
between the event-triggering mechanism \eqref{eq:ETC_LB}
and the self-triggering mechanism \eqref{eq:STC_Feedback}.
The event-triggering mechanism \eqref{eq:ETC_LB} measures
$\|x(t_k) - x(t)\|$, whereas the self-triggering
mechanism  \eqref{eq:STC_Feedback} estimates $\|Fx(t_k) - Fx(t)\|$.

Figures~\ref{fig:state_norm_ETC} and \ref{fig:input_ETC}
compare the time responses of 
the event-triggered control system and 
the periodic sampled-data system in the unperturbed case.
We depict
the state norm $\|x(t)\|$ in 
Figure~\ref{fig:state_norm_ETC} and
the input $u(t)$ in Figure~\ref{fig:input_ETC}.
The blue and red lines in these figures are for 
the event-triggered control system
and the periodic sampled-data system, respectively. 
We set the parameters of the plant and the controller 
as in the previous subsection.
The parameters of the event-triggering mechanism \eqref{eq:ETC_LB}
are 
$(\varepsilon,\tau_{\min},\tau_{\max}) = (0.07,0.001,0.5)$.
The sampling period of the periodic system is $h = 0.0205$, which
is the numerically obtained maximum value satisfying the sufficient condition 
\eqref{eq:suff_cond1_PSD} for exponential stability.
We see from Figure~\ref{fig:state_norm_ETC} that 
the state norms of the event-triggered control system
and  the periodic system look almost identical.
In Figure~\ref{fig:updating_interval_ETC}, the blue circles and the red squires depict
the inter-event times of the event-triggered control system and 
the periodic system, respectively.
We see from Figures~\ref{fig:input_ETC} and \ref{fig:updating_interval_ETC} that 
the number of data transmissions in
the even-triggered control system is much smaller 
than that in the periodic system.
This illustrates the effectiveness of event-triggering mechanisms, whose advantage is to change
transmission times depending on the state.

\begin{figure}[tb]
	\centering
	\includegraphics[width = 8.8cm]{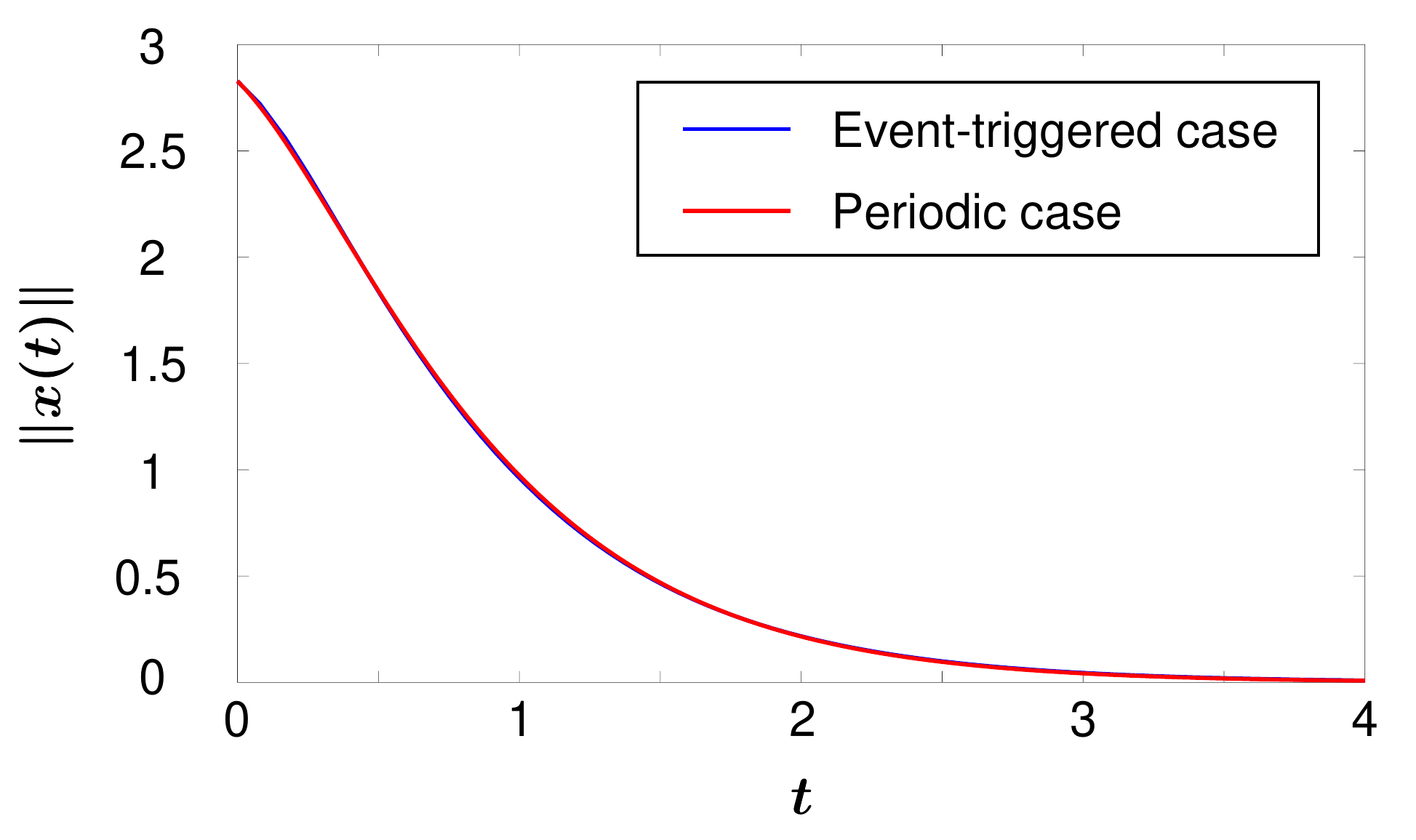}
	\caption{Comparison of state norm $\|x(t)\|$ between 
		event-triggered control system and
		periodic sampled-data system.}
	\label{fig:state_norm_ETC}
\end{figure}
\begin{figure}[tb]
	\centering
	\includegraphics[width =8.8cm]{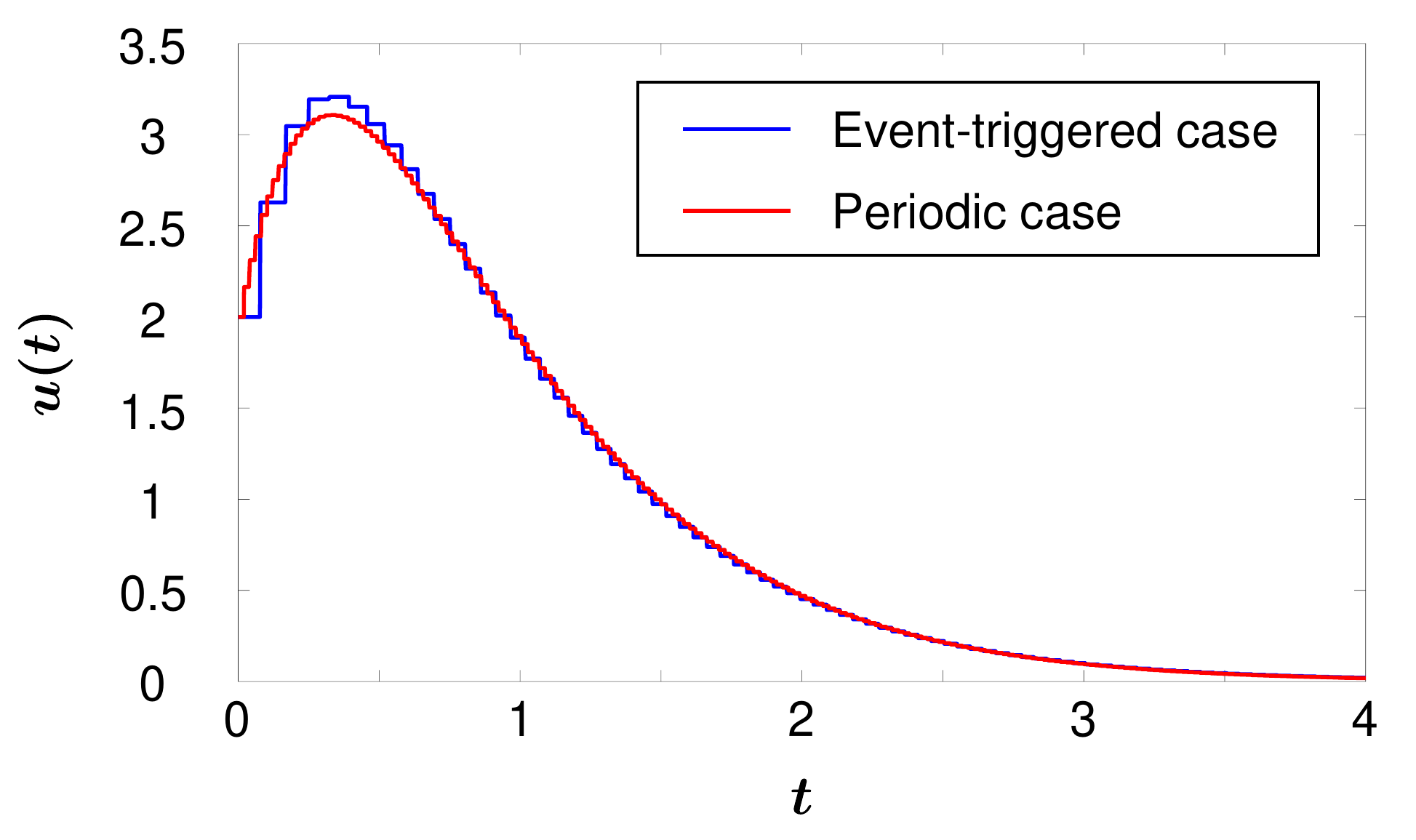}
	\caption{Comparison of input $u(t)$ between event-triggered control system and
		periodic sampled-data system.}
	\label{fig:input_ETC}
\end{figure}

\begin{figure}[tb]
	\centering
	\includegraphics[width = 8.8cm]{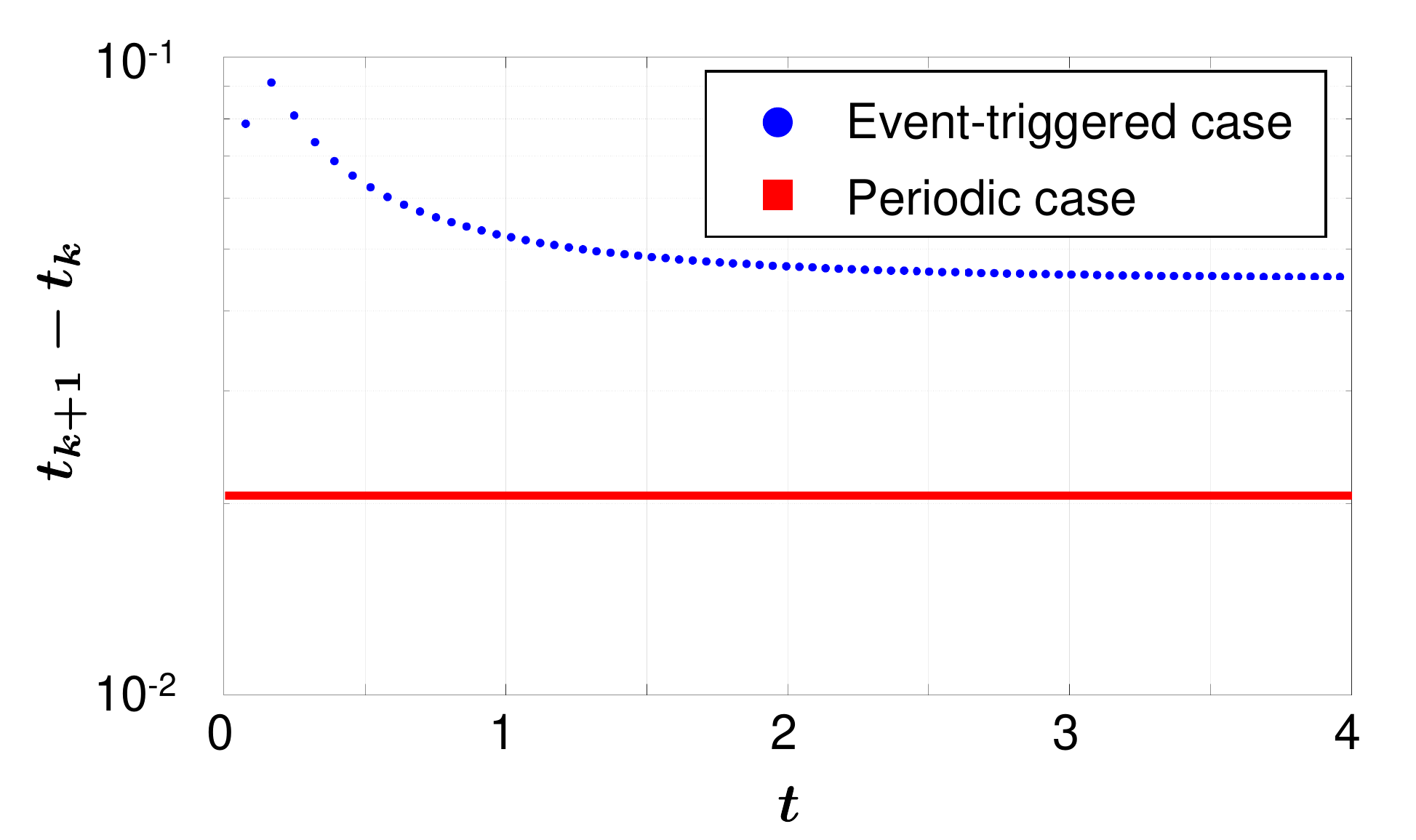}
	\caption{Comparison of inter-event times $t_{k+1}-t_k$ between 
		event-triggered control system and
		periodic sampled-data system.}
	\label{fig:updating_interval_ETC}
\end{figure}

Finally, we make a comparison of
inter-event times between the self-triggering mechanism
\eqref{eq:STC_Feedback} and the event-triggering mechanism 
\eqref{eq:ETC_LB}.
We consider
the self-triggering mechanism
with $(L,\varepsilon,\tau_{\max}) = 
(\sqrt{5}/20, 0.40, 0.5)$, which is used for the small perturbation case in
the previous subsection, and
the event-triggering mechanism 
with $(\varepsilon,\tau_{\min},\tau_{\max}) = 
(0.07, 0.001, 0.5)$ as in the simulations for 
Figures~\ref{fig:state_norm_ETC}--\ref{fig:updating_interval_ETC}.
As shown above,
the lower bounds of inter-event times
are $0.0147$ and $0.001$ for the self-triggering mechanism and
the event-triggering mechanism, respectively.
We see from Figures~\ref{fig:updating_interval}
and \ref{fig:updating_interval_ETC} that 
the inter-event times of the self-triggering mechanism
are larger than those of the event-triggering mechanism.
Hence, the self-triggering mechanism 
is better with respect to inter-event times
than the event-triggering mechanism in this example.
The reason is that the self-triggering mechanism
estimates the input error $Fx(t) - Fx(t_k)$, which
is more essential for control than
the state error $x(t) - x(t_k)$ used in
the event-triggering mechanism.

\section{Periodic event-triggering mechanism for unbounded control}
\label{sec:PET}
In this section, we study event-triggered control for infinite-dimensional systems
with unbounded control operators. 
We consider the evolution equation in the form of \eqref{eq:plant}, but
the difference from the bounded control 
case in Section~\ref{sec:bounded_control}
is that the control operator $B$ 
is unbounded, i.e., $B \in \mathcal{B}(U,X_{-1})$, where
$X_{-1}$ is the extrapolation space of $X$ associated with $T(t)$.
In the  unbounded control case, we cannot apply
standard results on solutions of
evolution equations with Lipschitz perturbations developed in 
Chapter~6 of \cite{Pazy1983}. Therefore, we need to begin by
showing that
the integral equation \eqref{eq:mild_solution}
has a unique solution even in the unbounded control case  and that
this solution satisfies the evolution equation \eqref{eq:plant} interpreted
in $X_{-1}$.
To this end, the compactness of the feedback operator plays an 
important role.
Next, we provide the existence result of periodic event-triggering mechanisms
that achieve exponential stability. Finally, 
we turn back to the bounded control case and give
a simple sufficient condition for the exponential stability
of the periodic event-triggered control system.
\subsection{Solution of evolution equation}
The following properties of $S_\tau$ obtained in 
Lemma 2.2 of \cite{Logemann2003}
are useful
in the analysis of the infinite-dimensional system \eqref{eq:plant} with
an unbounded control operator:
\begin{lemma}[Lemma~2.2 of \cite{Logemann2003}]
	\label{lem:Stau}
	Let $T(t)$ be 
	a strongly continuous semigroup on $X$ and 
	$B \in \mathcal{B}(U,X_{-1})$, where
	$X_{-1}$ is the extrapolation space of $X$ associated with $T(t)$.
	For any $\tau\geq 0$, the operator $S_{\tau}$ defined by 
	\[
	S_{\tau} := \int^{\tau}_0 T(s) Bds 
	\] 
	satisfies $S_{\tau} \in \mathcal{B}(U,X)$
	and
	\[
	\sup_{0\leq t\leq \tau} \|S_{t}\|_{\mathcal{B}(U,X)} < \infty.
	\]
	Moreover, for every $F \in \mathcal{K}(X,U)$,
	\begin{equation}
	\label{eq:SF_limit}
	\lim_{\tau \downarrow 0} \|S_{\tau}F\|_{\mathcal{B}(X)} = 0.
	\end{equation}
\end{lemma}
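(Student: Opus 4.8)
The plan is to prove the three assertions from a single explicit formula for $S_\tau$, obtained by transferring the $X_{-1}$-valued integral back into $X$ via the resolvent. Fix $\lambda \in \varrho(A)$ and write $R := (\lambda I - A)^{-1}$, which maps $X_{-1}$ boundedly onto $X$; correspondingly $(\lambda I - A)$ is a bounded isomorphism from $X$ onto $X_{-1}$. Since $B \in \mathcal{B}(U,X_{-1})$, we have $RB \in \mathcal{B}(U,X)$, so for each $u \in U$ the map $s \mapsto T(s)RBu$ is a continuous $X$-valued function. The identity I would establish is
\[
S_\tau u = \int_0^\tau T(s) B u \, ds = (\lambda I - A)\int_0^\tau T(s) R B u \, ds,
\]
which follows by writing $Bu = (\lambda I - A)RBu$, commuting $(\lambda I - A)$ through $T(s)$ (legitimate since $RBu$ lies in the domain $X$ of the extended generator), and pulling the bounded operator $(\lambda I - A)\colon X \to X_{-1}$ out of the Bochner integral. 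The standard semigroup fact that $\int_0^\tau T(s) y \, ds \in \dom(A)$ with $A\int_0^\tau T(s) y\, ds = T(\tau) y - y$ for $y = RBu \in X$ then rearranges this into
\[
S_\tau u = \lambda \int_0^\tau T(s) R B u \, ds - T(\tau) R B u + R B u,
\]
which manifestly lies in $X$. This yields $S_\tau \in \mathcal{B}(U,X)$ at once, and, bounding $\|T(s)\|_{\mathcal{B}(X)}$ by some $M$ on the compact interval $[0,\tau]$, the estimate $\|S_t u\| \le (|\lambda|\tau M + M + 1)\,\|RB\|_{\mathcal{B}(U,X)}\,\|u\|_U$ for all $t \in [0,\tau]$, giving the uniform bound $\sup_{0\le t\le\tau}\|S_t\|_{\mathcal{B}(U,X)} < \infty$.

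For the limit, I would first read off from the same formula that $S_\tau \to 0$ in the strong operator topology as $\tau \downarrow 0$: for fixed $u$, the integral term vanishes over the shrinking interval while $T(\tau)RBu \to RBu$ by strong continuity, so $S_\tau u \to 0$. Passing from this strong convergence to norm convergence of $S_\tau F$ is exactly where the compactness of $F$ is used. Since $F \in \mathcal{K}(X,U)$, the set $K := \overline{F(\{x : \|x\|\le 1\})}$ is compact in $U$, and on a compact set strong convergence is automatically uniform: given $\ep > 0$, cover $K$ by finitely many balls of radius $\ep/(2M)$ about points $v_1,\dots,v_N$, pick $\tau^*>0$ so small that $\|S_\tau v_i\| < \ep/2$ for all $i$ and all $\tau<\tau^*$, and combine the two estimates to obtain $\sup_{v\in K}\|S_\tau v\| < \ep$. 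Consequently $\|S_\tau F\|_{\mathcal{B}(X)} = \sup_{\|x\|\le 1}\|S_\tau F x\| \le \sup_{v\in K}\|S_\tau v\| \to 0$.

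The main obstacle is the first step: verifying that integrating the $X_{-1}$-valued function $T(s)Bu$ returns an element of $X$, which forces one to work carefully in the extrapolation space and to justify commuting $(\lambda I - A)$ with the Bochner integral. The point is that $\int_0^\tau T(s)RBu\,ds$ lands not merely in $X$ but in $\dom(A)$, so that applying $(\lambda I - A)$ there coincides with the original generator and keeps the result in $X$; the interchange itself is then just the boundedness of $(\lambda I - A)\colon X \to X_{-1}$. Once this identity is secured, both the boundedness with its uniform estimate and the strong convergence needed for the compactness argument drop out of the one formula, and what remains is the routine finite-cover argument that upgrades compactness of $F$ into the desired norm limit.
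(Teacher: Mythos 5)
Your argument is correct and complete. Note that the paper offers no proof of this statement at all---it is imported verbatim as Lemma~2.2 of \cite{Logemann2003}---and what you give is the standard argument used there: factor $Bu=(\lambda I-A)(\lambda I-A)^{-1}Bu$ with $(\lambda I-A)^{-1}Bu\in X$, use the semigroup identity $\int_0^\tau T(s)y\,ds\in\dom(A)$ with $A\int_0^\tau T(s)y\,ds=T(\tau)y-y$ to see that the integral lands back in $X$ with a bound uniform on compact time intervals, and then upgrade the strong convergence $S_\tau u\to 0$ to norm convergence of $S_\tau F$ by covering the compact set $\overline{F(\{x:\|x\|\le 1\})}$ finitely. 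The only (cosmetic) quibble is in the finite-cover step: the ball radius should be calibrated against $\sup_{0\le t\le\tau^*}\|S_t\|_{\mathcal{B}(U,X)}$, i.e.\ the uniform bound furnished by the second assertion, rather than against the semigroup bound $M$, since the triangle-inequality step estimates $\|S_\tau(v-v_i)\|$ and not $\|T(\tau)(v-v_i)\|$.
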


Using these properties of $S_{\tau}$, we obtain a result on
the existence and regularity of the solution of the integral equation \eqref{eq:mild_solution}.
\begin{theorem}
	\label{prop:existence_solution}
	Let $\tau_{\min} >0$ and
	an increasing sequence $\{t_k\}_{k\in\mathbb{N}_0}$ satisfy $t_0 = 0$ and 
	$t_{k+1}-t_k \geq \tau_{\min}$ for every $k \in \mathbb{N}_0$.
	Assume that $A$ is the generator of a strongly continuous semigroup $T(t)$ on $X$, 
	$B \in \mathcal{B}(U,X_{-1})$, $F \in \mathcal{K}(X,U)$, and a nonlinear operator $\phi:X\to X$ satisfies the Lipschitz condition
	\eqref{eq:Lip_cond}. Then
	the integral equation \eqref{eq:mild_solution} has a unique 
	solution $x$ in $C(\mathbb{R}_+,X)$.
	Furthermore, this solution $x$ satisfies
	\[
	x|_{[t_k,t_{k+1})} \in C^1\big([t_k,t_{k+1}), X_{-1}\big)\qquad \forall k \in \mathbb{N}_0
	\]
	and 
	\[
	\dot x(t) = Ax(t) + BFx(t_k) + \phi\big(
	x(t)
	\big)\qquad \forall t \in (t_k,t_{k+1}),~\forall k \in \mathbb{N}_0,
	\]
	which is interpreted in the extrapolation space $X_{-1}$.
\end{theorem}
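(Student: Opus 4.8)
The plan is to exploit the identity, obtained by the substitution $r=\tau-s$, that
$\int_0^\tau T(\tau-s)BFx(t_k)\,ds = S_\tau Fx(t_k)$, so that the a~priori $X_{-1}$-valued
control term is in fact $X$-valued: by Lemma~\ref{lem:Stau} we have $S_\tau\in\mathcal{B}(U,X)$,
hence $S_\tau F\in\mathcal{B}(X)$ and $S_\tau Fx(t_k)\in X$. With this, \eqref{eq:mild_solution_eq}
can be recast exactly as \eqref{eq:x_Delta}, i.e.
$x(t_k+\tau)=\Delta_\tau x(t_k)+\int_0^\tau T(\tau-s)\phi(x(t_k+s))\,ds$, with both summands in $X$,
so the integral equation is well posed in $X$ despite the unboundedness of $B$.

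For existence and uniqueness I would run a contraction argument interval by interval. On
$[t_k,t_{k+1}]$, treating $x(t_k)\in X$ as given, define $\Phi$ on $C([0,\delta],X)$ by
$(\Phi y)(\tau):=\Delta_\tau x(t_k)+\int_0^\tau T(\tau-s)\phi(y(s))\,ds$. The convolution term is
continuous in $\tau$ by strong continuity of $T(t)$ and continuity of $\phi(y(\cdot))$, while
$\tau\mapsto S_\tau Fx(t_k)$ is continuous in $X$ because $S_{\tau_2}F-S_{\tau_1}F=T(\tau_1)S_{\tau_2-\tau_1}F$
and $\|S_\delta F\|_{\mathcal{B}(X)}\to 0$ as $\delta\downarrow 0$ by \eqref{eq:SF_limit}; this is precisely
where the compactness of $F$ is indispensable and where the bounded-control argument, which merely invokes
Pazy, fails. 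Thus $\Phi$ maps $C([0,\delta],X)$ into itself, and the Lipschitz condition
\eqref{eq:Lip_cond} makes it a contraction on a short interval (equivalently in a weighted supremum norm
on the whole interval), giving a unique fixed point. Solving successively on $[0,t_1],[t_1,t_2],\dots$ and
matching endpoint values at the nodes yields a unique solution in $C(\mathbb{R}_+,X)$.

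For the regularity claim I would read \eqref{eq:mild_solution_eq}, on each $[t_k,t_{k+1})$, as the mild
solution in $X_{-1}$ of $\dot x=Ax+BFx(t_k)+\phi(x)$, the generator being the extension of $A$ to $X_{-1}$
with domain $X$, and split the forcing. For the constant term $BFx(t_k)\in X_{-1}$, the fundamental identity
$A\int_0^\tau T(r)v\,dr=T(\tau)v-v$ shows that $\tau\mapsto S_\tau Fx(t_k)$ is $C^1$ into $X_{-1}$, lies in $X$,
has derivative $T(\tau)BFx(t_k)$, and satisfies $A\,S_\tau Fx(t_k)=T(\tau)BFx(t_k)-BFx(t_k)$. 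For the term
$\phi(x(\cdot))\in C([t_k,t_{k+1}),X)$, I would use that the extension $A$ is bounded from $X$ into $X_{-1}$,
so $s\mapsto A\phi(x(t_k+s))$ is continuous in $X_{-1}$; the classical regularity theorem for inhomogeneous
Cauchy problems with forcing in $\dom(A)$ (see Section~4.2 of \cite{Pazy1983}) then makes the corresponding
convolution a classical $X_{-1}$-solution. Since $x(t_k)\in X=\dom(A)$ in the $X_{-1}$ sense, the homogeneous
part $T(\tau)x(t_k)$ is $C^1$ into $X_{-1}$ as well. Adding the three contributions and substituting the identity
for $A\,S_\tau Fx(t_k)$ gives $x|_{[t_k,t_{k+1})}\in C^1([t_k,t_{k+1}),X_{-1})$ and
$\dot x(t)=Ax(t)+BFx(t_k)+\phi(x(t))$ in $X_{-1}$.

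The genuine obstacle lies in the tension between the unboundedness of $B$ and the nonlinearity of $\phi$: the
solution must remain $X$-valued so that $\phi(x(t))$ is defined and the Lipschitz estimate applies, yet the
evolution equation only closes in $X_{-1}$. The compactness of $F$, through \eqref{eq:SF_limit}, is exactly what
reconciles the two by yielding $X$-continuity of $\tau\mapsto S_\tau Fx(t_k)$, without which $\Phi$ need not map
$C(\mathbb{R}_+,X)$ into itself. By contrast, the regularity step is comparatively routine once the boundedness of
the extension $A\colon X\to X_{-1}$ is used to place the $\phi$-term into the $\dom(A)$-forcing branch of the
classical-solution theorem.
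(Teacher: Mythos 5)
Your proposal is correct and follows essentially the same route as the paper: continuity of $\tau\mapsto S_\tau Fx(t_k)$ in $X$ via the identity $S_{\tau_2}F-S_{\tau_1}F=T(\tau_1)S_{\tau_2-\tau_1}F$ together with \eqref{eq:SF_limit} (compactness of $F$), a fixed-point/Pazy-type existence argument iterated interval by interval, and the regularity step obtained by splitting the forcing into the constant term $BFx(t_k)$ and the term $\phi(x(\cdot))$, the latter handled through the boundedness of the extension $A\colon X\to X_{-1}$ and Pazy's classical-solution criterion. The only cosmetic difference is that you spell out the contraction mapping where the paper cites Corollary~1.3 on p.~185 of \cite{Pazy1983} after verifying continuity of $S_tFx^0$.
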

By this theorem, 
we say as in the bounded control case that 
the solution of the integral equation \eqref{eq:mild_solution} 
is called a (mild) solution of the evolution equation \eqref{eq:plant}.

Let  $t_1 >0$ be given. We begin by investigating  the integral equation
\begin{equation}
\label{eq:int_eq_x0}
x(t) = T(t) x^0 + \int^t_0
T(t- s) 
\Big(
BF x^0 + \phi\big(x(s) \big)
\Big) ds,~~t \in [0,t_1];\quad x^0 \in X.
\end{equation}
\begin{lemma}
	\label{lem:cont_solution}
	Assume the same hypotheses on $A,B,F,\phi$ as in Theorem~\ref{prop:existence_solution}. Then
	the integral equation \eqref{eq:int_eq_x0} 
	has a unique solution in $C\big([0,t_1],X\big)$.
\end{lemma}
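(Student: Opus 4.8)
The plan is to prove existence and uniqueness via the Banach fixed-point theorem (contraction mapping principle) applied to the integral operator on $C([0,t_1],X)$. Define the map $\Phi: C([0,t_1],X) \to C([0,t_1],X)$ by
\[
(\Phi x)(t) := T(t) x^0 + \int^t_0 T(t-s)\Big(BFx^0 + \phi\big(x(s)\big)\Big)\,ds,\quad t\in[0,t_1].
\]
Two things must be checked before invoking the fixed-point theorem: that $\Phi$ maps into $C([0,t_1],X)$ (i.e., $\Phi x$ is genuinely $X$-valued and continuous, despite $B$ being unbounded), and that some iterate $\Phi^n$ is a contraction.

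**First I would** establish that $\Phi$ is well-defined, which is where the unboundedness of $B$ and the compactness of $F$ come into play. The term $T(t)x^0$ is continuous and $X$-valued since $T(t)$ is a strongly continuous semigroup on $X$. The perturbation term $\int_0^t T(t-s)\phi(x(s))\,ds$ is a standard convolution of the strongly continuous semigroup on $X$ with a continuous $X$-valued function, hence continuous and $X$-valued. The delicate term is $\int_0^t T(t-s)BFx^0\,ds = S_t Fx^0$, where $S_t$ is the operator from \eqref{eq:Delta_def}. A priori $B$ only maps into $X_{-1}$, so one must appeal to Lemma~\ref{lem:Stau}: it guarantees $S_t \in \mathcal{B}(U,X)$ with $\sup_{0\le t\le t_1}\|S_t\|_{\mathcal{B}(U,X)}<\infty$, so $S_tFx^0 \in X$ for each $t$. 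For continuity in $t$, I would use the relation $S_{t}Fx^0 - S_{t'}Fx^0 = \int_{t'}^{t} T(s)BFx^0\,ds$ together with the limit \eqref{eq:SF_limit} from Lemma~\ref{lem:Stau}, namely $\lim_{\tau\downarrow 0}\|S_\tau F\|_{\mathcal{B}(X)}=0$, which yields $t\mapsto S_tFx^0$ continuous in the norm of $X$. This is exactly the point flagged in the introduction: the compactness of $F$ makes $S_hFx^0$ continuous in $h$, and without it the term $S_tFx^0$ need not be $X$-valued or continuous.

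**Next I would** establish the contraction estimate. For $x,y\in C([0,t_1],X)$, the constant terms $T(t)x^0$ and $S_tFx^0$ cancel in the difference $\Phi x - \Phi y$, leaving only the perturbation term, so by the Lipschitz condition \eqref{eq:Lip_cond},
\[
\|(\Phi x)(t) - (\Phi y)(t)\| \leq \int_0^t \|T(t-s)\|_{\mathcal{B}(X)}\,L\,\|x(s)-y(s)\|\,ds \leq ML t \sup_{0\le s\le t}\|x(s)-y(s)\|,
\]
where $M := \sup_{0\le t\le t_1}\|T(t)\|_{\mathcal{B}(X)}$. Iterating this bound in the standard way gives $\|\Phi^n x - \Phi^n y\|_\infty \leq \frac{(MLt_1)^n}{n!}\|x-y\|_\infty$, so for $n$ large enough $\Phi^n$ is a contraction on the complete metric space $C([0,t_1],X)$. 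The Banach fixed-point theorem (in its form for a map whose iterate is a contraction) then yields a unique fixed point, which is precisely the unique continuous solution of \eqref{eq:int_eq_x0}.

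**The main obstacle** is the well-definedness step rather than the contraction step: the contraction estimate is entirely routine and identical to the bounded-control case, since the problematic $B$ only appears in the $x^0$-dependent term that drops out of $\Phi x - \Phi y$. The real work is in verifying that $S_tFx^0$ lands in $X$ and depends continuously on $t$, which rests essentially on Lemma~\ref{lem:Stau} and hence on the compactness assumption $F\in\mathcal{K}(X,U)$. Once Lemma~\ref{lem:cont_solution} is in place, it provides the building block on each subinterval $[t_k,t_{k+1})$, and concatenating these local solutions (using $x(t_k)$ as the new initial datum on the next interval) will give the global solution in $C(\mathbb{R}_+,X)$ asserted in Theorem~\ref{prop:existence_solution}.
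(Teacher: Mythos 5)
Your proposal is correct and follows essentially the same route as the paper: the paper likewise reduces the whole lemma to showing that $t \mapsto S_tFx^0$ is continuous and $X$-valued, using the identity $S_{t+\tau}Fx^0 - S_tFx^0 = T(t)S_\tau Fx^0$ together with Lemma~\ref{lem:Stau}, and then invokes Corollary~1.3 on p.~185 of \cite{Pazy1983} for the Lipschitz fixed-point part, which is exactly the iterated-contraction argument you write out by hand. The only difference is presentational: you inline the standard Picard estimate rather than citing it.
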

\begin{proof}
	By Corollary~1.3 on p.~185 in \cite{Pazy1983},
	it suffices to show that 
	\[
	\zeta(t) :=
	\int^t_0
	T(t- s) 
	BF x^0 ds = S_t Fx^0
	\]
	satisfies $\zeta \in C\big([0,t_1],X\big)$.

	Let $t \in [0,t_1)$ and $\tau \in (0,t_1-t)$ be given.
	By the strong continuity of $T(t)$,
	there exists $c \geq 1$ such that $\|T(t)\|_{\mathcal{B}(X)}\leq c$ for every $t \in [0,t_1]$.
	Since 
	\begin{align*}
	S_{t+\tau}Fx^0 - S_t Fx^0 = 
	\int^{t+\tau}_t T(s)BFx^0ds 
	=
	T(t) \int^{\tau}_0 T(s)BFx^0 ds 
	= T(t)S_\tau Fx^0,
	\end{align*}
	it follows that 
	\[
	\|S_{t+\tau}Fx^0 - S_t Fx^0\| \leq 
	c \|S_\tau Fx^0\|.
	\]
	Lemma~\ref{lem:Stau} yields
	\[
	\lim_{\tau \downarrow 0}\|\zeta(t+\tau) - \zeta(t) \| = 0,
	\]
	which implies that $\zeta$ is right continuous on $[0,t_1)$.
	Similarly, one can show that $\zeta$ is left continuous on $(0,t_1]$.
	Thus, $\zeta \in C\big([0,t_1],X\big)$. 
\end{proof}

We next study the differentiability of the solution of the integral equation \eqref{eq:int_eq_x0}.
\begin{lemma}
	\label{lem:diff_solution}
	Assume the same hypotheses on $A,B,F,\phi$ as in Theorem~\ref{prop:existence_solution}. 
	The solution $x \in C\big([0,t_1],X\big)$ of the integral equation \eqref{eq:int_eq_x0}
	satisfies \[
	x|_{[0,t_1)} \in C^1\big([0,t_1), X_{-1}\big)
	\]
	and 
	\[
	\dot x(t) = Ax(t) + BFx(t_k) + \phi\big(
	x(t)
	\big)\qquad \forall t \in [0,t_1),
	\]
	which is interpreted in the extrapolation space $X_{-1}$.
\end{lemma}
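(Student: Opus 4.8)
The plan is to differentiate the mild solution directly in the extrapolation space $X_{-1}$, where both the orbit $T(t)x^0$ and the control term $S_tFx^0$ become classically differentiable even though they need not be differentiable in $X$. Throughout I would use the extrapolation-space facts recorded in the notation section: $T(t)$ extends to a strongly continuous semigroup on $X_{-1}$ whose generator (still written $A$) belongs to $\mathcal{B}(X,X_{-1})$ and has domain $\dom(A)=X$, and $B\in\mathcal{B}(U,X_{-1})$, so $BFx^0\in X_{-1}$. By Lemma~\ref{lem:cont_solution} the solution $x$ of \eqref{eq:int_eq_x0} lies in $C([0,t_1],X)$; in particular $x(t)\in X=\dom(A)$ for every $t$, and this is exactly the regularity that will make the difference quotients converge in $X_{-1}$.

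First I would form the forward difference quotient and exploit the semigroup law. Setting $f(s):=BFx^0+\phi(x(s))\in X_{-1}$ and using $T(t+h-s)=T(h)T(t-s)$ for $0\le s\le t$, one obtains, for $t\in[0,t_1)$ and small $h>0$,
\[
x(t+h)-x(t)=\big(T(h)-I\big)x(t)+\int_t^{t+h}T(t+h-s)f(s)\,ds,
\]
an identity valid in $X_{-1}$, since all operators act boundedly there and the integrand is continuous into $X_{-1}$. Dividing by $h$ and letting $h\downarrow0$ term by term: because $x(t)\in\dom(A)$, the first term $h^{-1}(T(h)-I)x(t)\to Ax(t)$ in $X_{-1}$; splitting $f$, the $BFx^0$-part equals $h^{-1}\int_t^{t+h}T(t+h-s)BFx^0\,ds=h^{-1}S_hFx^0$ after the substitution $r=t+h-s$, and since $s\mapsto T(s)BFx^0$ is continuous into $X_{-1}$ this average tends to $BFx^0$ in $X_{-1}$; and the $\phi$-part tends to $\phi(x(t))$ in $X$ (hence in $X_{-1}$) by continuity of $s\mapsto\phi(x(s))$ in $X$ and strong continuity of $T$. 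Thus the right derivative of $x$ at $t$ exists in $X_{-1}$ and equals $Ax(t)+BFx^0+\phi(x(t))$.

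Finally I would upgrade right-differentiability to $C^1$. The candidate derivative $t\mapsto Ax(t)+BFx^0+\phi(x(t))$ is continuous from $[0,t_1)$ into $X_{-1}$: $A\in\mathcal{B}(X,X_{-1})$ with $x\in C([0,t_1],X)$ makes $Ax(\cdot)$ continuous, the middle term is constant, and the Lipschitz bound \eqref{eq:Lip_cond} makes $\phi(x(\cdot))$ continuous. A continuous $X_{-1}$-valued function possessing a continuous right derivative is continuously differentiable (reduce to the scalar case by composing with functionals in $X_{-1}^*$ and invoking the elementary real-variable fact that a continuous function with everywhere-vanishing right derivative is constant). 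This yields $x|_{[0,t_1)}\in C^1([0,t_1),X_{-1})$ with $\dot x(t)=Ax(t)+BFx^0+\phi(x(t))$ interpreted in $X_{-1}$, which is the asserted identity since on $[0,t_1)$ one has $x^0=x(t_0)$.

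The main obstacle is conceptual rather than computational: because $B$ is unbounded, neither $T(t)x^0$ nor $S_tFx^0$ is differentiable in $X$, so every limit must be taken in the weaker $X_{-1}$-topology, and one must simultaneously track two topologies, the $\phi$-term converging in $X$ while the control term converges only in $X_{-1}$. What makes this work is Lemma~\ref{lem:Stau}, which guarantees $S_tFx^0\in X$ so that $x(t)\in X=\dom(A)$ and hence $h^{-1}(T(h)-I)x(t)$ converges in $X_{-1}$, together with the strong continuity of the extended semigroup on $X_{-1}$ driving the averaging of the control term.
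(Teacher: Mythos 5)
Your argument is correct, but it takes a genuinely different route from the paper. The paper does not differentiate the mild solution by hand; it splits the inhomogeneity into the constant part $g_1 = BFx^0$ and the Lipschitz part $g_2(t)=\phi(x(t))$, and then invokes the classical regularity criterion for inhomogeneous Cauchy problems (Theorem~2.4 on p.~107 of Pazy), applied in the extrapolation space $X_{-1}$ where the generator of the extended semigroup has domain $X$. Concretely, it verifies that each convolution $v_i(t)=\int_0^t T(t-s)g_i(s)\,ds$ lies in $X=\dom(A)$ and that $Av_i$ is continuous into $X_{-1}$: for $g_1$ this uses $v_1(t)=S_tFx^0$ and the identity $Av_1(t)=(T(t)-I)BFx^0$, and for $g_2$ it uses the fact that $\lambda I-A$ is an isometry from $X$ onto $X_{-1}$ together with the Lipschitz continuity of $\phi$. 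You instead compute the forward difference quotient directly via the semigroup law, identify the three limits (the generator term on $\dom(A)=X$, the averaged control term $h^{-1}S_hFx^0\to BFx^0$ in $X_{-1}$, and the averaged perturbation term in $X$), and then upgrade continuous right-differentiability to $C^1$ by the standard Hahn--Banach reduction to the scalar case. Both proofs are sound; yours is more self-contained and makes transparent exactly where the two topologies ($X$ versus $X_{-1}$) enter, at the price of having to supply the right-derivative-to-$C^1$ lemma, while the paper's is shorter because Pazy's theorem packages that step. Your closing observation correctly identifies the same key ingredients the paper relies on, namely Lemma~\ref{lem:Stau} (so that $x(t)\in X$) and the compactness of $F$ feeding into Lemma~\ref{lem:cont_solution}.
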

\begin{proof}
	Define 
	$g_1(t) :=BFx^0 $ and $g_2(t) := \phi\big(x(t) \big)$ for $t \in [0,t_1)$.
	By Theorem~2.4 on p.~107 in \cite{Pazy1983},
	it is enough to show that for each $i \in \{1,2\}$, 
	$g_i \in C \big([0,t_1),X_{-1}\big)$ and $v_i$ defined by
	\[
	v_i(t) := \int^t_0 T(t-s)g_i(s)ds,\quad t \in [0,t_1)
	\]
	satisfies  $v_i \in X$ for every $t \in [0,t_1)$
	and $Av_i \in C \big([0,t_1),X_{-1}\big)$.
	
	Clearly, the constant function $g_1$ belongs to $C \big([0,t_1),X_{-1}\big)$.
	Since
	\[
	v_1(t) = S_t Fx^0,
	\]
	it follows from Lemma~\ref{lem:Stau} 
	that $v_1(t) \in X$ for every $t \in [0,t_1)$.
	Moreover,
	\[
	Av_1(t) = (T(t) - I)BFx^0,
	\]
	and hence $Av_1 \in C \big([0,t_1),X_{-1}\big)$
	by 
	the strong continuity of $T(t)$.
	
	Let us next investigate $g_2$ and $v_2$.
	Since $x \in C \big([0,t_1],X\big)$ and $\phi$ is Lipschitz continuous on $X$, it follows that 
	$g_2 \in C \big([0,t_1),X\big)$. 
	Let $\lambda \in \varrho(A)$  and $(X_{-1},\|\cdot\|_{X_{-1}})$ be the completion
	of $(X, \|\cdot \|_{-1})$, where $\|x\|_{-1} := \|(\lambda I - A)^{-1}x\|$ for $x \in X$.
	Since
	\begin{equation}
	\label{eq:cont_emb}
	\|x\|_{X_{-1}} = \|x\|_{-1} \leq \|(\lambda I - A)^{-1}\|_{\mathcal{B}(X)}~\! \|x\|\qquad \forall x \in X,
	\end{equation}
	it follows that  $g_2 \in C \big([0,t_1),X_{-1}\big)$.
	By definition, $v_2(t) \in X$ for every $t \in [0,t_1)$.
	To show $Av_2 \in C \big([0,t_1),X_{-1}\big)$, 
	it is enough to prove $Ag_2  \in C \big([0,t_1),X_{-1}\big)$, because
	\[
	Av_2(t) = \int^t_0 T(t-s)Ag_2(s)ds;
	\]
	see, e.g., Proposition~1.3.4 on p.~24 in \cite{Arendt2001} for
	the continuity property of convolutions.

	Since $\lambda I -A$ is an isometry from $X$ to $X_{-1}$ (see, e.g., Theorem~II.5.5 on p. 126 in \cite{Engel2000}),
	it follows that for every $t,s \in [0,t_1)$,
	\begin{align}
	\label{eq:Af2_bound}
	\|Ag_2(t) - Ag_2(s)\|_{X_{-1}}
	&\leq
	\big\|
	\phi\big(x(t) \big) - \phi\big(x(s) \big) 
	\big\| + 
	|\lambda| ~\!\big \|\phi\big(x(t) \big) - \phi\big(x(s) \big) \big\|_{X_{-1}}.
	\end{align}
	Using 
	$x \in C \big([0,t_1],X\big)$, the Lipschitz continuity of 
	$\phi$ on $X$, and \eqref{eq:cont_emb},
	we obtain $Ag_2  \in C \big([0,t_1),X_{-1}\big)$.
	This completes the proof.
\end{proof}

\begin{proof}[Proof of Theorem~\ref{prop:existence_solution}]
	Since $x(t_k) \in X$ for every $k \in \mathbb{N}_0$ by Lemma~\ref{lem:cont_solution},
	we obtain the desired conclusion by
	repeating the argument in Lemmas~\ref{lem:cont_solution} and \ref{lem:diff_solution}.
\end{proof}

\subsection{Periodic event-triggering mechanism}
\label{sec:unbounded_control}
We define the increasing sequence $\{t_k\}_{k\in\mathbb{N}_0}$ by
\begin{subequations}
	\label{eq:PET}
	\noeqref{eq:PET1, eq:PET2}
	\begin{align}
	&t_{k+1} := 
	\min\{ t_k + \ell_{\max}h,~\bar t_{k+1}
	\};\quad  \label{eq:PET1}t_0 := 0
	\\
	&\bar t_{k+1} := \min \big\{ \ell h > t_k:
	\|x(\ell h) - x(t_k)\| > \varepsilon \|x(t_k)\|,~\ell \in \mathbb{N}
	\big\},	\quad  k \in \mathbb{N}_0, \label{eq:PET2}
	\end{align}
\end{subequations}
where $h>0$
is a sampling period, 
$\varepsilon \geq 0$ is a threshold parameter, 
and $\ell_{\max} \in \mathbb{N}$ determines an upper bound
of inter-event times as follows: $t_{k+1} - t_k \leq \ell_{\max} h$
for every $k \in \mathbb{N}_0$.
We call \eqref{eq:PET}  a {\em periodic event-triggering mechanism}
\cite{Heemels2013}.
In the case $\varepsilon = 0$, the state $x(t)$ is transmitted at every $t = kh$, $k \in \mathbb{N}_0$, unless $x(kh+h) = x(kh)$. 
Therefore, the periodic event-triggering mechanism \eqref{eq:PET} with $\varepsilon=0$
can be regarded as the conventional periodic sampling process.
The periodic event-triggering mechanism \eqref{eq:PET} 
checks the condition only periodically
unlike the event-triggering mechanism \eqref{eq:ETC_LB}.
This discrete behavior may degrade the control performance  for
a large $h$,
but it makes the periodic event-triggering mechanism \eqref{eq:PET} 
better suited for practical implementations.

We analyze the periodic event-triggered control system by discretizing
the closed-loop system with period $h$.
The resulting discrete-time system has a bounded control operator by Lemma~\ref{lem:Stau}.
Combining this with an estimate of the perturbation term by
Gronwall's inequality,
we obtain a sufficient condition for exponential stability.
\begin{lemma}
	\label{lem:PET_stability}
		Assume that $A$ generates a strongly continuous semigroup $T(t)$ on $X$, 
	$B \in \mathcal{B}(U,X_{-1})$, $F \in  \mathcal{K}(X,U)$, 
	and a nonlinear operator $\phi:X\to X$ satisfies the Lipschitz condition \eqref{eq:Lip_cond}.
	Moreover, assume that 
	$\Delta_h$ defined by \eqref{eq:Delta_def} is power stable for some $h>0$, i.e.,
	there exist $\Omega \geq 1$ and $\omega \in (0,1)$ such that
	\[
	\|\Delta_h^k\|_{\mathcal{B}(X)} \leq \Omega \omega^k\qquad \forall k \in \mathbb{N}_0.
	\]
	Then the event-triggered control system \eqref{eq:plant} with the 
	mechanism \eqref{eq:PET} is exponentially stable
	for every $\ell_{\max} \in \mathbb{N}$ if
	 $L, \varepsilon \geq 0$ satisfy
	\begin{equation}
	\label{eq:PET_eps_cond}
	\varepsilon \Omega\big( \|S_hF\|_{\mathcal{B}(X)} + c_3(e^{c_2L h} - 1) \big)< 
	1 - \omega - c_1\Omega(e^{c_2L h} - 1),
	\end{equation}
	where 
	\begin{align}
	\label{eq:c_def}
	c_1 := 
	\sup_{0\leq \tau \leq h} \|\Delta_\tau\|_{\mathcal{B}(X)},~~
	c_2  := \sup_{0\leq \tau \leq h} \|T(\tau)\|_{\mathcal{B}(X)},~~
	c_3 := \sup_{0\leq \tau \leq h} \|S_{\tau} F\|_{\mathcal{B}(X)}.
	\end{align}
\end{lemma}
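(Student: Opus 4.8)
The plan is to discretise the closed-loop system at the sampling instants $\{jh\}$ and to exploit the power stability of $\Delta_h$ through an equivalent norm, carrying out the decay estimate one inter-event interval at a time. Since the transmission times produced by \eqref{eq:PET} are sampling times, each interval $[t_k,t_{k+1})$ consists of exactly $m_k\in\{1,\dots,\ell_{\max}\}$ full sampling periods, i.e.\ $t_{k+1}-t_k=m_kh$. Because $\Delta_h$ is only power stable, $\|\Delta_h^k\|_{\mathcal{B}(X)}\le\Omega\omega^k$ with $\Omega\ge1$ gives no contraction over a single period; so I would first introduce the discrete counterpart of \eqref{eq:new_norm_cont}, namely $|x|:=\sup_{k\ge0}\omega^{-k}\|\Delta_h^kx\|$, which satisfies $\|x\|\le|x|\le\Omega\|x\|$ and $|\Delta_hx|\le\omega|x|$ for all $x\in X$ (both verified in one line from power stability). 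This turns $\Delta_h$ into a genuine contraction and is what makes the sufficient condition come out sharp.

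Next I would fix an interval $[t_k,t_{k+1})$, on which the input is held at $Fx(t_k)$, write $x_i:=x(t_k+ih)$ and $\hat x:=x(t_k)$, and use the mild-solution representation \eqref{eq:x_Delta} over one period (with $S_h\in\mathcal{B}(U,X)$ by Lemma~\ref{lem:Stau}) to obtain, for $i=0,\dots,m_k-1$,
\[
x_{i+1}=\Delta_hx_i+S_hF(\hat x-x_i)+\int_0^hT(h-s)\phi\big(x(t_k+ih+s)\big)\,ds .
\]
The triggering rule \eqref{eq:PET} guarantees $\|\hat x-x_i\|\le\varepsilon\|\hat x\|$ at every intermediate sampling instant, so the error term is bounded by $\varepsilon\|S_hF\|_{\mathcal{B}(X)}\|\hat x\|$. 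For the perturbation term I would apply Gronwall's inequality on each period as in \eqref{eq:x_bound_tau_small}, giving $\|x(t_k+ih+s)\|\le(c_1\|x_i\|+c_3\varepsilon\|\hat x\|)e^{c_2Ls}$, and then integrate to bound it by $(e^{c_2Lh}-1)(c_1\|x_i\|+c_3\varepsilon\|\hat x\|)$. Collecting the two terms yields $\mu\|x_i\|+\nu\|\hat x\|$ with $\mu:=c_1(e^{c_2Lh}-1)$ and $\nu:=\varepsilon(\|S_hF\|_{\mathcal{B}(X)}+c_3(e^{c_2Lh}-1))$, where $c_1,c_2,c_3$ are as in \eqref{eq:c_def}.

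The crucial step is to propagate the recursion in the equivalent norm \emph{step by step} rather than in one shot. Using $|\Delta_hx_i|\le\omega|x_i|$, $\|x_i\|\le|x_i|$, $\|\hat x\|\le|\hat x|$, and $|y|\le\Omega\|y\|$ on the two remainder terms, I obtain $|x_{i+1}|\le(\omega+\Omega\mu)|x_i|+\Omega\nu|\hat x|$. Since $\hat x$ is constant on the interval and $|x_0|=|\hat x|$, iterating this scalar recursion with $a:=\omega+\Omega\mu$ gives $|x(t_{k+1})|=|x_{m_k}|\le\big(a^{m_k}+\Omega\nu\tfrac{1-a^{m_k}}{1-a}\big)|x(t_k)|$. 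Condition \eqref{eq:PET_eps_cond} is precisely $\Omega(\mu+\nu)<1-\omega$, which forces $a<1$ and $\Omega\nu/(1-a)<1$; the bracketed factor is then a convex combination of $1$ and $\Omega\nu/(1-a)$, hence strictly less than $1$ and maximal at $m_k=1$, namely $\lambda:=\omega+\Omega(\mu+\nu)<1$. Thus $|x(t_{k+1})|\le\lambda|x(t_k)|$, and by induction $|x(t_k)|\le\lambda^k|x^0|$. Doing this per step is what merges the term $\Omega\mu\|x_i\|$ with $\omega|x_i|$ into $a|x_i|$ and so avoids the lossy factors $(1+\varepsilon)$ or $1/(1-\varepsilon)$ that a one-shot interval estimate would introduce; handling this mismatch between the held input $Fx(t_k)$ and the running state without loosening the constant is the main obstacle, and the equivalent norm together with the step-by-step iteration is what resolves it.

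Finally I would pass to continuous time. The inter-sample Gronwall bound together with $|x_i|\le|x(t_k)|$ gives $\|x(t)\|\le(c_1+c_3\varepsilon)e^{c_2Lh}|x(t_k)|$ for all $t\in[t_k,t_{k+1})$, while $t<t_{k+1}\le(k+1)\ell_{\max}h$ yields $k>t/(\ell_{\max}h)-1$, so $\lambda^k\le\lambda^{-1}e^{-\gamma t}$ with $\gamma:=\log(1/\lambda)/(\ell_{\max}h)>0$. Combining these with $|x^0|\le\Omega\|x^0\|$ produces $\|x(t)\|\le\Gamma e^{-\gamma t}\|x^0\|$ for a suitable $\Gamma\ge1$, which is the claimed exponential stability.
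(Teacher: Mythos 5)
Your proof is correct and follows essentially the same route as the paper's: the equivalent norm $|\cdot|_{\rm d}$ induced by power stability of $\Delta_h$, the one-period mild-solution recursion with the triggering bound on $S_hF(\hat x - x_i)$ and a Gronwall estimate for the perturbation term, and iteration over the sampling steps within each inter-event interval, with your $a$, $\Omega\nu$, and $\lambda$ matching the paper's $\widetilde\omega_1(L)$, $\delta_1(L,\varepsilon)$, and $\widetilde\omega_1(1-\delta_2)+\delta_2$ evaluated at $p_k=1$. The only (immaterial) difference is the final passage to continuous time: the paper extracts a per-interval decay $e^{-\gamma_0 p_k h}$ proportional to the interval length via the decreasing function $f_{\rm p}$, whereas you use the uniform contraction $\lambda$ per interval together with $t_{k+1}\le (k+1)\ell_{\max}h$; both yield exponential stability (the paper's bookkeeping gives a slightly larger decay rate, which does not affect the statement).
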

\begin{proof}
	As in the proof of Theorem~5.8 in \cite{Wakaiki2018_EVC},
	define a new norm $|\cdot|_{\rm d}$ on $X$ by 	
	\begin{align}
	\label{eq:new_norm}
	|x|_{\rm d} := \sup_{\ell \in \mathbb{N}_0} \|\omega^{-\ell} \Delta_h^\ell x\|,
	\quad x \in X.
	\end{align}
	Similarly to the norm $|\cdot|$ defined by \eqref{eq:new_norm_cont},
	the discrete-time counterpart $|\cdot|_{\rm d}$ 
	satisfies
	\begin{equation}
	\label{eq:new_norm_prop}
	\|x\| \leq |x|_{\rm d} \leq \Omega \|x\|,\quad |\Delta_h^k x|_{\rm d} \leq \omega^k |x|_{\rm d}\qquad \forall x \in X,~\forall k \in \mathbb{N}_0.
	\end{equation}
	
	For the time sequence $\{t_k\}_{k\in \mathbb{N}_0}$ defined by \eqref{eq:PET},
	let $\ell_k  \in \mathbb{N}_0$ satisfy $t_k = \ell_k h$.
	The error $e$ induced by the event-triggering implementation is given by
	\[
	e(\ell h) := x(\ell_k h) - x(\ell h),\quad  \ell \in [\ell_k,\ell_{k+1}) \cap \mathbb{N}_0,~ k \in \mathbb{N}_0.
	\]
	Under the periodic event-triggering mechanism \eqref{eq:PET}, 
	the error $e$ satisfies
	\begin{equation}
	\label{eq:error_bound_PET}
	\|e(\ell h)\| \leq \varepsilon \|x(\ell_k h)\|\qquad \forall \ell \in [\ell_k,\ell_{k+1})\cap \mathbb{N}_0,~\forall k \in \mathbb{N}_0.
	\end{equation}
	The solution of  the integral equation \eqref{eq:mild_solution} can be rewritten as
	\begin{equation}
	\label{eq:dist_state_eq}
	x(\ell h + \tau ) = \Delta_\tau x(\ell h )  + \int^\tau_0 T(\tau-s)\phi\big(
	x(\ell h+s)
	\big)ds + S_\tau Fe (\ell h)
	\end{equation}
	for every $\tau \in (0,h]$ and $\ell \in \mathbb{N}_0$.
Gronwall's inequality yields 
\begin{align}
\|x(\ell h + \tau ) \|
&\leq 
\big(
c_1\|x(\ell h)\| + \varepsilon c_3 \|x(\ell_k h)\|	
\big)
+ c_2 L \int^\tau_0 \|x(\ell h +s)\| ds \notag \\
&\leq 
e^{c_2L \tau}\big(
c_1\|x(\ell h)\| + \varepsilon c_3 \|x(\ell_k h)\|	
\big)  \label{eq:inter_sample_bound}
\end{align}
for every $\tau \in (0,h]$, $\ell \in [\ell_k,\ell_{k+1})\cap \mathbb{N}_0$, and $k \in \mathbb{N}_0$, where
$c_1,c_2,c_3\geq0$ are defined by \eqref{eq:c_def}.
It follows from \eqref{eq:dist_state_eq} with $\tau = h$ that
\begin{align}
\big|
x\big((\ell+1 )h \big)
\big|_{\rm d} &\leq 
\omega |x(\ell h)|_{\rm d} 
+ \varepsilon \Omega \|S_hF\|_{\mathcal{B}(X)}  |x(\ell_k h)|_{\rm d} \notag \\
&\qquad + c_2 L \Omega \int^h_0 e^{c_2L s} ds 
\big(
c_1|x(\ell h)|_{\rm d} 
+ \varepsilon c_3 |x(\ell_k h)|_{\rm d}	
\big) \notag \\
&\leq 
\widetilde \omega_1 (L) |x(\ell h)|_{\rm d} + 
\delta_1(L,\varepsilon) |x(\ell_k h)|_{\rm d},
\label{eq:sample_time_bound}
\end{align}
where
\[
\widetilde \omega_1 (L) := \omega + c_1\Omega(e^{c_2L h} - 1),\quad
\delta_1(L,\varepsilon) :=
\varepsilon\Omega
\big( \|S_hF\|_{\mathcal{B}(X)} + c_3(e^{c_2L h} - 1)\big).
\]
Proceeding by induction, 
we have
	\begin{align*}
	\big|x(\ell_{k+1} h)\big|_{\rm d} &\leq 
	\widetilde \omega_1 (L)^{p_k} |x(\ell_k h)|_{\rm d} + 
	\frac{1-\widetilde \omega_1 (L)^{p_k}}{1-\widetilde \omega_1 (L)}
	\delta_1(L,\varepsilon) |x(\ell_k h)|_{\rm d}  \\
	&= \big(
	\widetilde \omega_1 (L)^{p_k} [1-\delta_2(L,\varepsilon)] + \delta_2(L,\varepsilon)
	\big) |x(\ell_k h)|_{\rm d}\qquad \forall k \in \mathbb{N}_0,
	\end{align*}
	where 
	\[
	p_{k} := \ell_{k+1} - \ell_k,\quad 
	\delta_2(L,\varepsilon) := \frac{\delta_1(L,\varepsilon)}
	{1-\widetilde \omega_1 (L)}.
	\]
	For $L,\varepsilon \geq 0$, \eqref{eq:PET_eps_cond}
	holds if and only if $ \widetilde \omega_1 (L) < 1$ and
	$ \delta_2(L,\varepsilon)  < 1$.
	Let $L,\varepsilon \geq 0$ satisfy
	\eqref{eq:PET_eps_cond}, and define $\widetilde \omega := \widetilde \omega_1 (L)$ and
	$\delta := \delta_2(L,\varepsilon) $.
	If we define the function $f_{\rm p}$ on $\mathbb{N}$ by
	\[
	f_{\rm p}(\ell) := \frac{-\log (\widetilde \omega^\ell(1-\delta) + \delta)}{\ell h},
	\] 
	then
	$f_{\rm p}$ is positive and monotonically decreasing on $\mathbb{N}$.
	Therefore, 
	\[
	\big|x(\ell_{k+1} h)\big|_{\rm d}  
	\leq e^{-\gamma_0 p_kh } |x(\ell_k h)|_{\rm d} \qquad \forall k \in \mathbb{N}_0,
	\]
	where $\gamma_0 := f_{\rm p}(\ell_{\max}) > 0$.
	By induction, we obtain
	\[
	|x(\ell_kh)|_{\rm d} \leq e^{-\gamma_0 \ell_k h }|x^0|_{\rm d}\qquad 
	\forall x^0 \in X,~\forall k\in \mathbb{N}_0.
	\]
	Using \eqref{eq:inter_sample_bound} and \eqref{eq:sample_time_bound}
	again, 
	we obtain
	\begin{align*}
	\|x(\ell_kh+\tau)\| 
&\leq M e^{-\gamma_0 \ell_k h} \|x^0\|\\
	&\leq 
	(M e^{\gamma_0 \ell_{\max}h}) e^{-\gamma_0(\ell_kh+\tau)} \|x^0\|
	\qquad \forall \tau \in [0,p_k h],~\forall k \in \mathbb{N}_0
	\end{align*}
	for some $M >0$.
	Thus, the event-triggered control system is exponentially stable.
\end{proof}

Define an operator $A_{BF}$ on $X$ by
\begin{equation}
\label{eq:ABF_def}
A_{BF} x := (A+BF)x\quad \text{with~~}\dom (A_{BF}) := \{
x \in X : (A+BF) x \in X
\},
\end{equation}
which we distinguish from the unbounded operator $A+BF$ on $X_{-1}$ with 
$\dom(A+BF) = X$. 
Under the assumption that $T(t)$ is analytic, 
Theorem~4.8 in \cite{Logemann2003} shows that
the exponential stability of linear periodic sampled-data systems
is robust with respect to sampling.
\begin{theorem}[Theorem~4.8 in \cite{Logemann2003}]
	\label{thm:SD_stability}
	Assume that $A$ generates an analytic semigroup $T(t)$ on $X$, 
	$B \in \mathcal{B}(U,X_{-1})$, and $F \in \mathcal{K}(X,U)$.
	Moreover, assume that the semigroup generated by $A_{BF}$ given 
	in \eqref{eq:ABF_def}
		is exponentially stable.
	Then
	there exists $h^* >0$ such that for every $h \in (0,h^*)$,
	the linear periodic sampled-data  system \eqref{eq:plant} with $\phi \equiv 0$ and
	$t_{k+1}-t_k \equiv h$ is exponentially stable.
\end{theorem}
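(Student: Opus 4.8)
The plan is to reduce exponential stability of the linear periodic sampled-data system to power stability of the discrete-time generator $\Delta_h=T(h)+S_hF$, and then to establish the latter for all small $h$ by comparing $\Delta_h$ with the time-$h$ map $T_{BF}(h)$ of the continuous closed loop. For $\phi\equiv0$ and $t_{k+1}-t_k\equiv h$ the sampled state satisfies $x(\ell h)=\Delta_h^\ell x^0$, and the inter-sample values obey $\|x(\ell h+\tau)\|\le\big(\sup_{0\le\tau\le h}\|\Delta_\tau\|_{\mathcal B(X)}\big)\|x(\ell h)\|$, which is finite by Lemma~\ref{lem:Stau} and the strong continuity of $T(t)$. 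Thus exponential stability is equivalent to power stability of $\Delta_h$ (this is also Lemma~\ref{lem:PET_stability} with $L=\varepsilon=0$, where \eqref{eq:PET_eps_cond} collapses to $\omega<1$), and it suffices to find $h^\ast>0$ with $\Delta_h$ power stable for every $h\in(0,h^\ast)$.

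Writing $\|T_{BF}(t)\|_{\mathcal B(X)}\le\Gamma e^{-\gamma t}$ and renorming $X$ by $|x|:=\sup_{t\ge0}\|e^{\gamma t}T_{BF}(t)x\|$ as in \eqref{eq:new_norm_cont}, one has $\|x\|\le|x|\le\Gamma\|x\|$ and $|T_{BF}(h)x|\le e^{-\gamma h}|x|$. With $R_h:=\Delta_h-T_{BF}(h)$, the variation-of-constants identity $T_{BF}(h)=T(h)+\int_0^hT(h-s)BFT_{BF}(s)\,ds$ together with $\Delta_h=T(h)+S_hF$ gives \[ R_h=\int_0^hT(h-s)BF\big(I-T_{BF}(s)\big)\,ds, \] whence $|\Delta_h x|\le(e^{-\gamma h}+\Gamma\|R_h\|_{\mathcal B(X)})|x|$. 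Consequently $\Delta_h$ is power stable in $|\cdot|$, hence in $\|\cdot\|$, as soon as $\Gamma\|R_h\|<1-e^{-\gamma h}$, so everything reduces to showing that $\|R_h\|$ is small of the right order as $h\downarrow0$, say $\|R_h\|=o(h)$.

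This estimate is the heart of the matter, and it is precisely where unbounded $B$ departs from the bounded-control condition \eqref{eq:suff_cond1_PSD}: since $B\in\mathcal B(U,X_{-1})$ and $T$ is analytic, $T(h-s)B$ is singular as $s\uparrow h$, so the pointwise bound $\|R_h\|\le h\sup_{0\le s\le h}\|T(h-s)BF(I-T_{BF}(s))\|$ diverges. The device I would use is integration by parts based on $\tfrac{d}{ds}S_{h-s}=-T(h-s)B$ together with $F(I-T_{BF}(0))=0$, which recasts \[ R_h=-\int_0^h\big(S_{h-s}F\big)\big(A_{BF}T_{BF}(s)\big)\,ds, \] trading the singular family $T(\cdot)B$ for the uniformly bounded family $S_{(\cdot)}$ of Lemma~\ref{lem:Stau}. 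Here analyticity supplies $\|A_{BF}T_{BF}(s)\|_{\mathcal B(X)}\le Cs^{-1}e^{-\gamma s}$ (the semigroup $T_{BF}$ being again analytic, a standard consequence of the analyticity of $T$ and $F\in\mathcal K(X,U)$), while compactness of $F$ supplies $\sup_{0\le\tau\le h}\|S_\tau F\|_{\mathcal B(X)}\to0$ (Lemma~\ref{lem:Stau}) and, through the adjoint ($F^\ast$ compact and $T_{BF}(\cdot)^\ast\to I$ strongly), $\|F(I-T_{BF}(s))\|\to0$.

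The main obstacle is that the small factor $S_{h-s}F$ and the factor $A_{BF}T_{BF}(s)$ cannot both be handled by an absolute-value bound: $A_{BF}T_{BF}(s)$ is only conditionally integrable near $0$, since $\int_0^hA_{BF}T_{BF}(s)\,ds=T_{BF}(h)-I$ is bounded while $\int_0^h\|A_{BF}T_{BF}(s)\|\,ds=\infty$. I would therefore keep the integral paired and split $[0,h]$ at $h/2$: on $[h/2,h]$ the small factor $\sup_{0\le\tau\le h/2}\|S_\tau F\|$ multiplies the integrable tail of $A_{BF}T_{BF}$, and on $[0,h/2]$ I would use the near-constancy of $S_{h-s}F$ together with the exact cancellation $\int_0^h A_{BF}T_{BF}(s)\,ds=T_{BF}(h)-I$ to replace the divergent absolute bound by a bounded aggregate, thereby extracting the quantitative smallness $\Gamma\|R_h\|<1-e^{-\gamma h}$ for all small $h$. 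An alternative that sidesteps the sharp single-step constant is the multi-step route: show $\|\Delta_h^{\,n}-T_{BF}(t_0)\|_{\mathcal B(X)}\to0$ whenever $nh\to t_0$ (via the same ingredients through a telescoping estimate), fix $t_0$ with $\Gamma e^{-\gamma t_0}<1$ so that $\|T_{BF}(t_0)\|<1$, and conclude $\|\Delta_h^{\,n}\|<1$, hence power stability for every sufficiently small $h$.
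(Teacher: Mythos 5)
The paper does not prove this statement at all: Theorem~\ref{thm:SD_stability} is imported verbatim from Theorem~4.8 of \cite{Logemann2003}, so there is no internal proof to compare against, and your attempt has to be judged on its own. Your reduction to power stability of $\Delta_h$, the identity $R_h=\Delta_h-T_{BF}(h)=\int_0^hT(h-s)BF(I-T_{BF}(s))\,ds$, and the integration by parts giving $R_h=-\int_0^hS_{h-s}F\,A_{BF}T_{BF}(s)\,ds$ are all correct, and you correctly identify the singularity of $T(h-s)B$ as the obstruction. The gap is in the quantitative conclusion you need from this. The one-step contraction argument requires $\Gamma\|R_h\|_{\mathcal B(X)}<1-e^{-\gamma h}$, i.e.\ $\|R_h\|=O(h)$ with a specific constant. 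But every estimate available from your ingredients --- $\sup_{0\le\tau\le h}\|S_\tau F\|\to0$ from Lemma~\ref{lem:Stau}, $\|A_{BF}T_{BF}(s)\|\le Cs^{-1}$ from analyticity, and the cancellation $\int_0^\delta A_{BF}T_{BF}(s)\,ds=T_{BF}(\delta)-I$ --- produces bounds of order $\sup_{0\le\tau\le h}\|S_\tau F\|$ (up to constants and logarithms). For genuinely unbounded $B$ this quantity tends to zero but typically like a fractional power of $h$ (e.g.\ $h^{1/2}$ when $B$ has degree of unboundedness $1/2$), which is \emph{larger} than $1-e^{-\gamma h}\sim\gamma h$ for small $h$. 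So the inequality $\Gamma\|R_h\|<1-e^{-\gamma h}$ cannot be extracted by your splitting at $h/2$, and the single-step route collapses. This is exactly why the one-step sufficient condition \eqref{eq:suff_cond1_PSD} appears in the paper only for bounded $B$ (Remark~\ref{rem:periodic_case}).

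The multi-step route you relegate to your final sentence is the one that actually works, and it is essentially the argument of \cite{Logemann2003}: show $\|\Delta_h^{\,n}-T_{BF}(nh)\|_{\mathcal B(X)}\to0$ uniformly for $nh$ in a compact interval, pick $t_0$ with $\|T_{BF}(t_0)\|<1$, and deduce $\|\Delta_h^{\,n}\|<1$. But as written this is a placeholder rather than a proof: you still need (i) a uniform a priori bound $\sup\{\|\Delta_h^{\,n}\|:nh\le\tau^*,\,0<h\le h_0\}<\infty$, which does not follow from anything you have established, and (ii) the telescoping sum $\sum_{j=0}^{n-1}\Delta_h^{\,n-1-j}R_hT_{BF}(jh)$ must be shown to vanish as $h\downarrow0$ with $n\sim t_0/h\to\infty$; the point is that for $j\ge1$ the smoothing bound $\|A_{BF}T_{BF}(s+jh)\|\le C(jh)^{-1}$ makes $\|R_hT_{BF}(jh)\|\lesssim j^{-1}\sup_{0\le\tau\le h}\|S_\tau F\|$, so the sum is controlled by $\log(1/h)\cdot\sup_{0\le\tau\le h}\|S_\tau F\|$ --- and even then one must verify this product tends to zero. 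None of this is ``the same ingredients through a telescoping estimate''; it is the actual content of the cited theorem, and your proposal does not supply it.
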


See also \cite{Rebarber2006} for another result on robustness of stabilization with respect to sampling
in the unbounded control case.

Combining Lemma~\ref{lem:PET_stability} and Theorem~\ref{thm:SD_stability},
we obtain a result on the existence of a 
periodic event-triggering mechanism
that achieves exponential stability.
\begin{theorem}
	\label{thm:PETC_LB_stability}
	Assume the same hypotheses on $A,B,F,T(t)$ as in Theorem~\ref{thm:SD_stability}, and
	choose $h >0$ so that the  linear 
	periodic sampled-data system \eqref{eq:plant} 
	with $\phi \equiv  0$  and $t_{k+1}-t_k \equiv h$ is exponentially stable.
	Moreover, assume that  a nonlinear operator $\phi:X\to X$ satisfies
	the Lipschitz condition \eqref{eq:Lip_cond}.
	Then there exist  $\varepsilon^* =\varepsilon^*(h)> 0 $ and $L^* =L^*(h)>0$ such that 
	for every $\varepsilon \in [0,\varepsilon^*]$ and every $L \in [0,L^*]$, 
	the system \eqref{eq:plant} with the
	periodic event-triggering mechanism \eqref{eq:PET} is exponentially stable.
\end{theorem}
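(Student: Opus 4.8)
The plan is to derive Theorem~\ref{thm:PETC_LB_stability} as a short corollary of Lemma~\ref{lem:PET_stability}, with the bulk of the work being a continuity-and-monotonicity argument to verify the sufficient condition \eqref{eq:PET_eps_cond} for small $L$ and $\varepsilon$. The two ingredients I would assemble are: (i) that the chosen $h$ makes $\Delta_h$ power stable, and (ii) that the condition \eqref{eq:PET_eps_cond} holds with strict inequality at $L=\varepsilon=0$ and is preserved in a neighborhood.

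First I would observe that exponential stability of the linear periodic sampled-data system is nothing but power stability of $\Delta_h$. In the unperturbed case $\phi\equiv 0$ with $t_{k+1}-t_k\equiv h$, the representation \eqref{eq:x_Delta} gives $x\big((k+1)h\big)=\Delta_h x(kh)$, hence $x(kh)=\Delta_h^k x^0$. If the sampled-data system satisfies $\|x(t)\|\leq \Gamma e^{-\gamma t}\|x^0\|$, then evaluating at $t=kh$ yields $\|\Delta_h^k\|_{\mathcal{B}(X)}\leq \Gamma (e^{-\gamma h})^k$, so $\Delta_h$ is power stable with $\Omega:=\Gamma$ and $\omega:=e^{-\gamma h}\in(0,1)$. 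By Theorem~\ref{thm:SD_stability} such an $h$ exists, so the hypothesis is non-vacuous, and once $h$ is fixed the constants $\Omega,\omega$ together with $c_1,c_2,c_3$ in \eqref{eq:c_def} are all fixed numbers independent of $L$ and $\varepsilon$.

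With $\Delta_h$ power stable, the remaining task is to exhibit $L^*,\varepsilon^*>0$ for which \eqref{eq:PET_eps_cond} holds on $[0,L^*]\times[0,\varepsilon^*]$. At $(L,\varepsilon)=(0,0)$ the inequality \eqref{eq:PET_eps_cond} reduces to $0<1-\omega$, which is true since $\omega\in(0,1)$. The right-hand side $1-\omega-c_1\Omega(e^{c_2 L h}-1)$ is continuous and strictly decreasing in $L$, while the left-hand side $\varepsilon\Omega\big(\|S_hF\|_{\mathcal{B}(X)}+c_3(e^{c_2 L h}-1)\big)$ is continuous and nondecreasing in both $L$ and $\varepsilon$. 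So I would first choose $L^*>0$ small enough that $c_1\Omega(e^{c_2 L^* h}-1)<1-\omega$, which keeps the right-hand side positive for every $L\in[0,L^*]$; then, with $L^*$ fixed, choose $\varepsilon^*>0$ so small that $\varepsilon^*\Omega\big(\|S_hF\|_{\mathcal{B}(X)}+c_3(e^{c_2 L^* h}-1)\big)<1-\omega-c_1\Omega(e^{c_2 L^* h}-1)$.

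Monotonicity then guarantees that \eqref{eq:PET_eps_cond} holds for every $(L,\varepsilon)\in[0,L^*]\times[0,\varepsilon^*]$, since the worst case is attained at $(L^*,\varepsilon^*)$: increasing $L$ enlarges the left-hand side and shrinks the right-hand side, and increasing $\varepsilon$ only enlarges the left-hand side. Lemma~\ref{lem:PET_stability} then delivers exponential stability of the periodic event-triggered control system for every $\ell_{\max}\in\mathbb{N}$, which completes the argument. I do not expect a serious obstacle here, as the theorem is essentially a corollary of Lemma~\ref{lem:PET_stability}; the only point requiring care is the passage from exponential stability of the sampled-data system (the form in which Theorem~\ref{thm:SD_stability} supplies the hypothesis) to power stability of $\Delta_h$ (the form Lemma~\ref{lem:PET_stability} requires), and as noted this is immediate from $x(kh)=\Delta_h^k x^0$.
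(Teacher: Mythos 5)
Your proposal is correct and follows essentially the same route as the paper: reduce to Lemma~\ref{lem:PET_stability} by establishing power stability of $\Delta_h$ and then verifying that the sufficient condition \eqref{eq:PET_eps_cond} holds for all sufficiently small $L$ and $\varepsilon$, since at $(L,\varepsilon)=(0,0)$ it reads $0<1-\omega$. The only cosmetic difference is that the paper invokes Lemma~2.3 of \cite{Logemann2003} for the equivalence between exponential stability of the linear sampled-data system and power stability of $\Delta_h$, whereas you derive the needed implication directly from $x(kh)=\Delta_h^k x^0$; both are fine.
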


\begin{proof}
	By Lemma~2.3  in \cite{Logemann2003}, the linear periodic sampled-data system \eqref{eq:plant} 
	with $\phi \equiv 0$ and $t_{k+1}-t_k \equiv h$ 
	is exponentially stable if and only if
	the operator $\Delta_h$ is power stable.
	Combining this with Lemma~\ref{lem:PET_stability}, 
	we obtain the desired result.
\end{proof}

Finally, we return to the bounded control case
$B \in \mathcal{B}(U,X)$.
Suppose that the semigroup $T_{BF}(t)$ is exponentially
stable, i.e, 
\eqref{eq:TBF_exp_stable} holds for some $\Gamma\geq 1$ and $\gamma >0$.
For $h >0$, define
\begin{equation}
\label{eq:W_def}
W(h) :=
\Gamma e^{\gamma h}\sup_{0 \leq t\leq h} \big\|
T(h-t) BF [I-T_{BF}(t)]\big\|_{\mathcal{B}(X)}.
\end{equation}
As explained in Remark \ref{rem:periodic_case},
$\Delta_h$ defined by \eqref{eq:Delta_def} is power stable
if $W(h) < \gamma$.
In such a case, we obtain
\[
\| \Delta_h^k \|_{\mathcal{B}(X)} \leq \Gamma \omega^k\qquad \forall k \in \mathbb{N}_0,
\]
where
$
\omega := 1 - [\gamma - W(h)] e^{-\gamma h}h
$; see (3.6) in the proof of Theorem~3.1 of \cite{Logemann2003}.
This fact, together with Lemma~\ref{lem:PET_stability}, yields a 
simple sufficient condition for 
the periodic event-triggered control system to be exponentially stable.
To avoid the trivial case in which
the open-loop system is exponentially stable,
we here additionally assume that $S_hF \not = 0$ holds for every $h>0$ satisfying $W(h) < \gamma$.
\begin{corollary}
	Suppose that Assumption~\ref{assump:bounded_case} is satisfied and 
	 $S_hF \not = 0$ holds for every $h>0$ satisfying $W(h) < \gamma$.
	The system \eqref{eq:plant} with the
periodic event-triggering mechanism \eqref{eq:PET} is exponentially stable
for every $\ell_{\max} \in \mathbb{N}$ if
$h>0$ and  $L, \varepsilon \geq 0$ satisfy
\begin{equation}
\label{eq:PETC_cond_bounded}
\varepsilon < \frac{
[\gamma - W(h)]e^{-\gamma h}h - c_1 \Gamma(e^{c_2L h} - 1)}{\Gamma \big(\|S_hF\|_{\mathcal{B}(X)} + c_3(e^{c_2L h} - 1) \big)},
\end{equation}
where  $c_1,c_2,c_3\geq0$ are defined by \eqref{eq:c_def}.
\end{corollary}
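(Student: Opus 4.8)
The plan is to read the corollary directly off Lemma~\ref{lem:PET_stability} by supplying the explicit power-stability constants for $\Delta_h$ that are available under Assumption~\ref{assump:bounded_case} once $W(h)<\gamma$. The only genuine content is to translate the abstract hypothesis of Lemma~\ref{lem:PET_stability} (that $\Delta_h$ be power stable for some $\Omega\geq 1$, $\omega\in(0,1)$) into the concrete constants $\Omega=\Gamma$ and $\omega=1-[\gamma-W(h)]e^{-\gamma h}h$, and then to verify that the abstract threshold condition \eqref{eq:PET_eps_cond} collapses to \eqref{eq:PETC_cond_bounded} under this substitution.

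First I would check that \eqref{eq:PETC_cond_bounded} already forces $W(h)<\gamma$, which is what licenses the explicit power-stability constants. Since $S_hF\neq 0$ by hypothesis, $\|S_hF\|_{\mathcal{B}(X)}>0$, so the denominator $\Gamma\big(\|S_hF\|_{\mathcal{B}(X)}+c_3(e^{c_2Lh}-1)\big)$ in \eqref{eq:PETC_cond_bounded} is strictly positive; together with $\varepsilon\geq 0$, the inequality \eqref{eq:PETC_cond_bounded} then forces its numerator to be positive, i.e. $[\gamma-W(h)]e^{-\gamma h}h > c_1\Gamma(e^{c_2Lh}-1)\geq 0$ (using $c_1,\Gamma\geq 0$ and $e^{c_2Lh}-1\geq 0$ for $L\geq 0$), whence $W(h)<\gamma$. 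With $W(h)<\gamma$ in hand, I would invoke the power-stability estimate recorded just before the statement (from Remark~\ref{rem:periodic_case} and (3.6) in the proof of Theorem~3.1 of \cite{Logemann2003}): $\|\Delta_h^k\|_{\mathcal{B}(X)}\leq \Gamma\omega^k$ for all $k\in\mathbb{N}_0$ with $\omega=1-[\gamma-W(h)]e^{-\gamma h}h$. Here $\Gamma\geq 1$, and because $0\leq W(h)<\gamma$ gives $0<[\gamma-W(h)]e^{-\gamma h}h\leq \gamma h\, e^{-\gamma h}\leq e^{-1}<1$, one has $\omega\in(0,1)$, so the hypotheses of Lemma~\ref{lem:PET_stability} are met with $\Omega=\Gamma$.

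Finally I would substitute $\Omega=\Gamma$ and $1-\omega=[\gamma-W(h)]e^{-\gamma h}h$ into \eqref{eq:PET_eps_cond} and divide through by the positive denominator $\Gamma\big(\|S_hF\|_{\mathcal{B}(X)}+c_3(e^{c_2Lh}-1)\big)$; this shows that \eqref{eq:PET_eps_cond} is exactly equivalent to \eqref{eq:PETC_cond_bounded}. Hence Lemma~\ref{lem:PET_stability} applies and yields exponential stability of the periodic event-triggered control system for every $\ell_{\max}\in\mathbb{N}$. I do not expect any serious obstacle: the corollary is a bookkeeping specialization of Lemma~\ref{lem:PET_stability}. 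The only point requiring a little care is the logical direction — that \eqref{eq:PETC_cond_bounded} is not merely a rewriting of \eqref{eq:PET_eps_cond} but must first be shown to imply the standing requirement $W(h)<\gamma$ that supplies the explicit constants; this is precisely the role of the nondegeneracy assumption $S_hF\neq 0$, which guarantees a positive denominator and thereby forces a positive numerator (hence $W(h)<\gamma$).
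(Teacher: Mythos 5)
Your proposal is correct and follows exactly the route the paper intends: the paper gives no separate proof of this corollary but derives it, in the paragraph preceding the statement, by substituting the explicit power-stability constants $\Omega=\Gamma$ and $\omega=1-[\gamma-W(h)]e^{-\gamma h}h$ (valid when $W(h)<\gamma$, via Remark~\ref{rem:periodic_case} and (3.6) of \cite{Logemann2003}) into condition \eqref{eq:PET_eps_cond} of Lemma~\ref{lem:PET_stability}, which is precisely your bookkeeping. Your additional observation that \eqref{eq:PETC_cond_bounded} itself forces $W(h)<\gamma$ (positive denominator via $S_hF\neq 0$, hence positive numerator) is a worthwhile clarification of a point the paper leaves implicit.
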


For the numerical example 
of the unperturbed case $\psi(x_2) = x_2$ in Section~\ref{sec:numerical_example},
one can observe that 
the bounds of the parameters $(\varepsilon , h)$
satisfying \eqref{eq:PETC_cond_bounded} are similar to
to those of $(\varepsilon , \tau_{\min})$ shown in Figure~\ref{fig:ETC_cond}.
Moreover, as expected easily, if $h$ is small, then 
numerical simulations of the periodic event-triggered control systems
are also closely similar to those in Figures~\ref{fig:state_norm_ETC}--\ref{fig:updating_interval_ETC}.
We omit these figures because they show 
almost identical trends as Figures~\ref{fig:ETC_cond}--\ref{fig:updating_interval_ETC}.

\section{Conclusion}
In this paper,
we have analyzed the exponential stability 
of infinite-dimensional event/self-triggered control systems with Lipschitz perturbations.
The fundamental assumption is that the feedback operator is compact,
which guarantees the strict positiveness of inter-event times and
the existence of the mild solution of the evolution equation
with an unbounded control operator.
We have shown that 
if the parameters of the event/self-triggering mechanisms are appropriately 
chosen,
then
exponential stability is preserved under all
perturbations with sufficiently small Lipschitz constants.
Moreover, in the bounded control case, we have provided 
simple sufficient conditions for exponential stability.

\providecommand{\href}[2]{#2}
\providecommand{\arxiv}[1]{\href{http://arxiv.org/abs/#1}{arXiv:#1}}
\providecommand{\url}[1]{\texttt{#1}}
\providecommand{\urlprefix}{URL }

\medskip
Received xxxx 20xx; revised xxxx 20xx.
\medskip

\end{document}